\newtheorem{theorem}{Theorem}[section]
\newtheorem{lemma}[theorem]{Lemma}
\newtheorem{corollary}[theorem]{Corollary}
\newtheorem{prop}[theorem]{Proposition}
\theoremstyle{definition}
\newtheorem{definition}[theorem]{Definition}
\newtheorem{example}[theorem]{Example}
\theoremstyle{remark}
\newtheorem{remark}[theorem]{Remark}
\newcommand{\PH}[1]{\mathrm{{PH}}}
\def\C{\mathbb{C}}
\def\F{\mathbb{F}}
\def\N{\mathbb{N}}
\def\Q{\mathbb{Q}}
\def\R{\mathbb{R}}
\def\Z{\mathbb{Z}}
\def\xx{\mathbf{x}}
\def\yy{\mathbf{y}}
\def\II{\mathbf{I}}
\def\CC{\mathbf{C}}
\def\JJ{\mathbf{J}}
\def\NN{\mathbf{N}}
\def\RR{\mathbf{R}}
\def\bcd{\mathsf{bcd}}
\def\<{\langle}
\def\>{\rangle}
\DeclareMathOperator*{\argmax}{\arg\!\max\;}
\begin{document}

\title{K{\"u}nneth Formulae in Persistent Homology}

\author[Hitesh Gakhar]{Hitesh Gakhar}
\address{
\shortstack[l]{
Department of Mathematics, \\
Michigan State University \\
East Lansing, MI, USA.}}
\email{gakharhi@msu.edu}

\author[Jose Perea]{Jose A. Perea }
\address{
\shortstack[l]{
Department of Computational Mathematics, Science \& Engineering \\
Department of Mathematics, \\
Michigan State University \\
East Lansing, MI, USA.}}
\email{joperea@msu.edu}

\thanks{This work was partially supported by the NSF (DMS-1622301)}


\subjclass[2010]{Primary  55N99, 68W05; Secondary 55U99}


\date{}

\dedicatory{}

\keywords{Persistent homology, K\"{u}nneth formula, homological algebra, topological data analysis}
\begin{abstract} The classical K\"{u}nneth formula in algebraic topology describes the homology of a product space in terms of that of its factors. In this paper, we prove K\"{u}nneth-type theorems
for the persistent homology of the categorical and tensor  product of filtered spaces.
That is,  we describe the persistent homology of these  product filtrations in terms of that of the filtered components.
In addition to comparing the two products, we present two applications in the setting of Vietoris-Rips complexes: one towards more efficient  algorithms for product metric spaces with the maximum metric, and the other recovering persistent homology  calculations on the $n$-torus.
\end{abstract}

\maketitle
\tableofcontents
\section{Introduction}

Persistent homology is an algebraic and computational tool from topological data analysis \cite{perea2018brief}.
Broadly speaking, it is used to quantify multiscale features of shapes and
some  of its applications to science and engineering include
coverage problems in sensor networks  \cite{sensor},
recurrence detection in time series data  \cite{toroidal, perea2019notices, sw1pers},
the identification  of spaces of fundamental features from natural scenes  \cite{naturalimages},
inferring spatial properties of unknown environments via biobots  \cite{biobot},
and more.
Some of the underlying ideas of persistence are also starting to permeate pure mathematics,
most notably  symplectic geometry  \cite{hofer, floernovikov}.

The successes  of persistent homology stem in part  from a strong theoretical foundation \cite{CPH, dualities, oudotBook}
and a focus on efficient algorithmic implementations \cite{phchunks, distributed, ph1pass, phtwist, effcubic}.
That said, the inherent algorithmic problems are far from being solved---see \cite{roadmap} for a recent survey---and
current theoretical approaches to abstract computations are limited to very specific cases \cite{adams, vrellipse, vrpolygon}.
Our goal in this paper is to add to the toolbox of techniques for abstract computations of persistent homology
when the input  can be described in terms of (a product of) simpler components.

To be more specific,  the simplest input to a persistent homology
computation is a collection $\mathcal{X} = \{ X_0 \subset X_1 \subset \cdots \}$ of  topological  spaces
so that each inclusion $X_i \hookrightarrow X_{i+1}$ is continuous.
This is called a filtration.
Taking singular homology in dimension $n \geq 0$ with coefficients in a field $\F$ yields a diagram
\begin{equation}\label{eq:persistentHomology}
  H_n(X_0 ; \F) \xrightarrow{\;\;L_0\;\;} H_n(X_1 ; \F) \xrightarrow{\;\;L_1\;\;} \cdots
  \xrightarrow{\;L_{i-1}\;} H_n(X_i;\F) \xrightarrow{\;\;L_i\;\;} \cdots
\end{equation}
of vector spaces and linear maps between them. Here $L_i$ is the linear transformation induced by the inclusion $X_i \hookrightarrow X_{i+1}$.
A theorem of Crawley-Boevey \cite{crawley2015decomposition} implies that
if each $H_n(X_i ; \F)$ is finite dimensional---this is called being \emph{pointwise finite}---then there exists
a set
of possibly repeating intervals $[\ell, \rho) \subset \N := \{0,1,2, \ldots \}$
  (i.e., a \emph{multiset}), which  uniquely determines  the isomorphism type
of (\ref{eq:persistentHomology}).
The resulting multiset---this is the output of the persistent homology computation---is denoted $\bcd_n(\mathcal{X}; \F)$,
or just $\bcd_n(\mathcal{X})$ if there is no ambiguity,
and it is called the barcode of (\ref{eq:persistentHomology}).
An interval $[\ell, \rho ) \in \bcd_n(\mathcal{X})$ corresponds to a class $\eta \in H_n(X_\ell ; \F)$ which is not
in the image of $L_{\ell-1}$, and  so that  $\rho$ is either $\infty$, or the smallest integer greater than  $ \ell$
for which
$\eta \in \mathsf{ker}(L_{\rho-1} \circ \cdots \circ L_{\ell})$.
Either way, $\rho- \ell$ is the persistence of the homological feature $\eta$,
the index $\ell$ is its birth-time, and $\rho$ is its death-time.
Our goal  is to understand  diagrams like (\ref{eq:persistentHomology}),
and their barcodes, for the case when the input is the product of two filtrations.

What do we mean by  product filtrations?
We consider two answers, one categorical/computational and the other algebraic.
The categorical/computational answer is to let
$\mathcal{X}\times \mathcal{Y}$ be the filtration  $X_0 \times Y_0 \,\subset\, X_1 \times Y_1 \subset \cdots $
 or said more succinctly,
\begin{equation}\label{eq:categoricalProduct}
\left(\mathcal{X} \times \mathcal{Y}\right)_k\, := \, X_k \times Y_k \;\;\;\;\;\; , \;\;\;\;\;\; k\in \N.
\end{equation}
This answer is categorical in the sense that if
$X_k$ and $Y_k$ are objects in a category $\CC$ with all finite products, and the indices $k$ are  objects in a small category $\II$,
then (\ref{eq:categoricalProduct}) is the product (i.e., it satisfies the appropriate universal property) in the  category of functors from
$\II$ to $\CC$ (see Proposition \ref{catprod}).
Thus, we call $\mathcal{X}\times \mathcal{Y}$ the \emph{categorical product} of the filtrations $\mathcal{X}$ and $\mathcal{Y}$.
This answer is also computational in the sense that it is relevant to persistent homology algorithms:
if $(X,d_X)$ is a metric space and $R_\epsilon(X, d_X)$ is its $\epsilon$-Rips complex---that is,
the abstract simplicial complex whose simplices  are the finite nonempty subsets of $X$ with diameter less than $\epsilon$---then
(see Lemma \ref{lemma: rips})
\begin{equation}\label{eq:ProductRips}
R_\epsilon(X\times Y, d_{X\times Y}) = R_\epsilon(X,d_X)\times R_\epsilon(Y,d_Y)
\end{equation}
where $d_{X\times Y}$ is the maximum metric
\begin{equation}\label{eq:MaxMetric}
d_{X\times Y}\big((x,y), (x',y')\big) := \max\{d_X(x, x'), d_Y(y,y')\}
\end{equation}
and the product of  Rips complexes on the right hand side of (\ref{eq:ProductRips}) takes place in the category of abstract simplicial complexes
(see \ref{expl:Simp} and \ref{def:Simp} for definitions).

The second answer to seeking an appropriate notion of product filtration is algebraic in a sense which will be clear below (equation (\ref{eq:IntroSequence})).
 For now, let us just define what it is:  the \emph{tensor product} of   $\mathcal{X}$ and $\mathcal{Y}$ is the filtration
\begin{equation}\label{eq:tensorProduct}
\left(\mathcal{X} \otimes \mathcal{Y}\right)_k \, := \, \bigcup_{i+j=k} \left( X_i \times Y_j \right) \;\;\;\;\; , \;\;\;\; k\in \N.
\end{equation}

The simplest versions of the persistent K\"unneth formulae we prove in this paper (Theorems \ref{barcodeDPF} and \ref{KunnethTPF})
can  be stated as follows:
\begin{theorem}\label{thm:IntroKunneth}
Let $\mathcal{X}$ and $\mathcal{Y}$ be pointwise finite filtrations.
Then, the barcodes of the categorical product $\mathcal{X}\times\mathcal{Y}$
are given by the (disjoint)  union of multisets
\begin{align*}
\bcd_n\left(\mathcal{X} \times \mathcal{Y} \right)
=
\bigcup_{i + j = n}
\Big\lbrace
I \cap J \ \Big\vert \ I \in \bcd_{i}(\mathcal{X})\,, \; J \in \bcd_{j}(\mathcal{Y})
\Big\rbrace.
\end{align*}
Similarly,  the barcodes for the tensor product filtration $\mathcal{X} \otimes \mathcal{Y}$  satisfy
\begin{align*}
\bcd_n(\mathcal{X} \otimes \mathcal{Y})
=
\bigcup_{i + j = n} 	\bigg\{  (\ell_J + I) \cap (\ell_I + J)\ \Big\vert \ I \in \bcd_i(\mathcal{X})\,,\; J \in \bcd_j(\mathcal{Y}) \bigg\} \hspace{1cm}\\
\bigcup \hspace{5.15cm}\\
\bigcup_{i + j = n-1}\bigg\{ (\rho_J + I)\cap(\rho_I + J)\Big\vert \ I \in {\mathsf{bcd}}_i(\mathcal{X})\,,\; J \in {\mathsf{bcd}}_j(\mathcal{Y}) \bigg\}\hspace{1cm}
\end{align*}
where $\ell_J$ and $\rho_J$ denote, respectively, the left and right endpoints of the interval $J$.
\end{theorem}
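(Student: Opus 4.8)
The plan is to prove both formulae by combining a Künneth theorem, applied at the right level, with the interval decomposition guaranteed by Crawley--Boevey, and then reducing everything to a computation with interval modules. Throughout I identify a pointwise finite $\N$-indexed persistence module with a graded module over the graded principal ideal domain $\F[t]$, with $t$ acting as the structure map; in this language $\bcd_n(\mathcal{X})$ records exactly the interval summands of $H_n(\mathcal{X})$, a bar $[\ell,\rho)$ corresponding to the cyclic module $\Sigma^{\ell}\F[t]/(t^{\rho-\ell})$ (and to $\Sigma^{\ell}\F[t]$ when $\rho=\infty$).

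For the categorical product I would first apply the field-coefficient Künneth isomorphism $H_n(X_k\times Y_k)\cong\bigoplus_{i+j=n}H_i(X_k)\otimes_\F H_j(Y_k)$ at each level $k$. Naturality of this isomorphism with respect to the inclusions $X_k\times Y_k\hookrightarrow X_{k+1}\times Y_{k+1}$ upgrades it to an isomorphism of persistence modules, where the right-hand side carries the \emph{pointwise} tensor structure $(M\otimes N)_k=M_k\otimes_\F N_k$ with maps $\varphi^M_k\otimes\varphi^N_k$. Decomposing $H_i(\mathcal{X})$ and $H_j(\mathcal{Y})$ into interval modules and distributing the pointwise tensor over direct sums, it remains to compute $\F[I]\otimes\F[J]$ pointwise; this module is $\F$ in degree $k$ precisely when $k\in I\cap J$ and $0$ otherwise, with identity structure maps across $I\cap J$, hence it is $\F[I\cap J]$. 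Reading off the summands gives the first formula.

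For the tensor product filtration the key is to pass to chains. I would show that applying simplicial chains to $(\mathcal{X}\otimes\mathcal{Y})_k=\bigcup_{i+j=k}X_i\times Y_j$ yields, up to filtered chain homotopy equivalence, the tensor product $C_*(\mathcal{X})\otimes_\F C_*(\mathcal{Y})$ of the filtered chain complexes, where the filtration degree of a product chain is the \emph{sum} of the degrees of its factors. Each $C_k(\mathcal{X})$ is a free graded $\F[t]$-module---one generator $\Sigma^{b_\sigma}\F[t]$ for each simplex $\sigma$ of birth time $b_\sigma$---so the algebraic Künneth theorem over the PID $\F[t]$ applies and produces a (non-naturally) split short exact sequence
\begin{equation*}
0\to\bigoplus_{i+j=n}H_i(\mathcal{X})\otimes_{\F[t]}H_j(\mathcal{Y})\to H_n(\mathcal{X}\otimes\mathcal{Y})\to\bigoplus_{i+j=n-1}\mathrm{Tor}^{\F[t]}_1\!\big(H_i(\mathcal{X}),H_j(\mathcal{Y})\big)\to0.
\end{equation*}
Because the barcode depends only on the isomorphism type, the splitting lets me handle the two summands separately. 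Decomposing into interval modules and using that $\otimes_{\F[t]}$ and $\mathrm{Tor}^{\F[t]}_1$ commute with direct sums, I reduce to evaluating these functors on a single pair $\F[I],\F[J]$ via the free resolution $0\to\Sigma^{\rho_I}\F[t]\xrightarrow{\,\cdot\, t^{\rho_I-\ell_I}}\Sigma^{\ell_I}\F[t]\to\F[I]\to0$. A direct calculation then gives $\F[I]\otimes_{\F[t]}\F[J]\cong\F\big[(\ell_J+I)\cap(\ell_I+J)\big]$ and $\mathrm{Tor}^{\F[t]}_1(\F[I],\F[J])\cong\F\big[(\rho_J+I)\cap(\rho_I+J)\big]$, which are precisely the two families of intervals in the statement.

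The main obstacle is this chain-level identification in the tensor product case: one must verify that the space-level union filtration $\bigcup_{i+j=k}X_i\times Y_j$ is carried, through the Eilenberg--Zilber equivalence, to the algebraic tensor product filtration in which filtration degrees add. This is where the geometry of the product filtration is translated into the algebra of graded $\F[t]$-modules, and it is what forces the appearance of $\mathrm{Tor}$---absent in the categorical case, where naturality of the field-coefficient Künneth map keeps everything a levelwise tensor of vector spaces. Once this compatibility is established, the remaining interval-module computations are routine, and the contrast between the two products---the plain intersection $I\cap J$ versus the shifted intersections governed by the endpoints $\ell$ and $\rho$---emerges directly from comparing the pointwise tensor product with the $\F[t]$-module tensor and its derived functor $\mathrm{Tor}$.
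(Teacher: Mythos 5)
Your treatment of the categorical product is correct and is essentially the paper's own proof: the pointwise topological K\"unneth isomorphism with field coefficients, upgraded by naturality to an isomorphism of $\NN$-indexed diagrams (Lemma \ref{lemma:KunnethCategorical}), followed by interval decomposition and the computation $\mathds{1}_I\otimes\mathds{1}_J\cong\mathds{1}_{I\cap J}$ of Example \ref{expl:interval}, exactly as in Theorem \ref{barcodeDPF}. Likewise, your interval-module computations for the tensor half agree with Proposition \ref{intervaltensor}, and your split short exact sequence is the graded algebraic K\"unneth formula, Theorem \ref{algkunnethpers}.

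The gap is the step you yourself flag as ``the main obstacle'': the filtered chain-level identification of $(\mathcal{X}\otimes\mathcal{Y})_k=\bigcup_{i+j=k}X_i\times Y_j$ with the graded tensor product $PS_*(\mathcal{X})\otimes_{\F[t]}PS_*(\mathcal{Y})$ (note also that the K\"unneth theorem over the PID $\F[t]$ requires the tensor over $\F[t]$, a proper quotient of the $\otimes_\F$ you write). You assert this would come ``through the Eilenberg--Zilber equivalence,'' but the classical theorem only gives, for each fixed pair of spaces, a natural chain equivalence over $\F$; it does not hand you a chain equivalence of complexes of graded $\F[t]$-modules, and unlike the categorical case this is not a routine naturality upgrade. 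The paper must rerun the acyclic-models machinery from scratch in the persistent category (Lemma \ref{EZnew}), with models $(\Sigma_e\Delta^p,\Sigma_f\Delta^q)$, which requires proving acyclicity of both functors (Lemma \ref{FandEacyclic}), their freeness over $\F[t]$ (Lemma \ref{FandEfree}, itself needing a Zorn's-lemma completion argument to build compatible $\F[t]$-bases), and a natural identification in degree zero (Lemma \ref{HFEequivalence}, a substantial argument using the Lebesgue number lemma). There is a second, point-set, obstruction you do not address: for singular chains, a simplex of the union $(\mathcal{X}\otimes\mathcal{Y})_k$ need not lie in any single $X_i\times Y_j$, so the subcomplex $\sum_{i+j=k}S_n(X_i\times Y_j)$ is chain-homotopy equivalent to $S_n\big((\mathcal{X}\otimes\mathcal{Y})_k\big)$ only when the pieces form an open cover; this is why the paper restricts to $\mathbf{FTop}_0$ and replaces an arbitrary filtration by its telescope and $\epsilon$-thickened telescope (Lemma \ref{lemma:tensorandgentensor}, Proposition \ref{prop:defretract}, Theorem \ref{thm:PersistentEZ}). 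Your ``one generator per simplex'' freeness claim is the simplicial-complex picture, whereas the statement concerns filtered topological spaces and singular chains. Until this persistent Eilenberg--Zilber step is actually proven, the tensor-product half of your argument does not go through; everything downstream of it (splitting, interval decomposition, tensor and Tor computations) is fine.
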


It is worth noting that this theorem   also holds true for filtrations of (ordered) simplicial complexes, simplicial sets,
simplicial homology,   and  products in the appropriate categories.
In particular,  it implies the following corollary (\ref{cor:RipsKunneth}) for the Rips persistent homology of product metric spaces:
\begin{corollary}
Let $(X,d_X), (Y,d_Y)$ be finite metric spaces
and let $\bcd_n^\mathcal{R}(X,d_X)$ be the barcode of the Rips filtration
$\mathcal{R}(X, d_X) := \{R_\epsilon(X,d_X)\}_{\epsilon \geq 0}$. Then,
\[
\bcd_n^\mathcal{R}(X\times Y,d_{X\times Y}) = \bigcup_{i + j = n} \Big\{ I \cap J \; \Big| \; I \in \bcd^\mathcal{R}_i(X,d_X) \, , \; J \in \bcd_j^\mathcal{R}(Y,d_Y)\Big\}
\]
for all $n\in\N$, if $d_{X\times Y}$ is the maximum metric (\ref{eq:MaxMetric}).
\end{corollary}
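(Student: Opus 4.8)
The plan is to deduce this statement directly from the categorical Künneth formula of Theorem \ref{thm:IntroKunneth}, by recognizing the Rips filtration of the product metric space as the categorical product of the two individual Rips filtrations. The only genuine work is to reconcile the continuous scale parameter $\epsilon \geq 0$ with the $\N$-indexing used in the statement of Theorem \ref{thm:IntroKunneth}.

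First I would discretize. Since $X$ and $Y$ are finite, the sets of diameters $\{\mathrm{diam}_X(S) : S \subseteq X\}$ and $\{\mathrm{diam}_Y(S) : S \subseteq Y\}$ are finite, so each $R_\epsilon(X,d_X)$ is a finite simplicial complex that changes only at finitely many critical values of $\epsilon$; in particular the homology is finite dimensional at every scale, so the filtrations are pointwise finite. By the maximum metric (\ref{eq:MaxMetric}), the diameter of a subset $S \subseteq X\times Y$ equals the larger of the diameters of its two coordinate projections, so the critical values of $X\times Y$ are contained in the union of the critical values of $X$ and of $Y$. Writing $T \subset \R_{\geq 0}$ for that finite union and reindexing all three filtrations by the order-isomorphism $T \cong \{0,1,\ldots,N\}$, I obtain pointwise-finite $\N$-indexed filtrations of simplicial complexes in which each barcode interval $[\ell,\rho)$ corresponds to a half-open real interval with endpoints in $T$.

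Next I would invoke the Rips product formula (\ref{eq:ProductRips}): for every $\epsilon$, a finite subset of $X\times Y$ has $d_{X\times Y}$-diameter less than $\epsilon$ if and only if both of its coordinate projections do, whence $R_\epsilon(X\times Y,d_{X\times Y}) = R_\epsilon(X,d_X)\times R_\epsilon(Y,d_Y)$ as a categorical product of simplicial complexes. Consequently the reindexed filtration $\mathcal{R}(X\times Y,d_{X\times Y})$ is exactly the categorical product $\mathcal{R}(X,d_X)\times\mathcal{R}(Y,d_Y)$ in the relevant functor category, so applying the categorical half of Theorem \ref{thm:IntroKunneth} (valid in its simplicial form by the remark following that theorem) yields
\[
\bcd_n^\mathcal{R}(X\times Y,d_{X\times Y}) = \bigcup_{i+j=n} \Big\{ I \cap J \; \Big| \; I \in \bcd_i^\mathcal{R}(X,d_X),\; J \in \bcd_j^\mathcal{R}(Y,d_Y) \Big\}.
\]

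Finally I would translate back to the continuous parameter, checking that the discrete interval intersection $I\cap J$ agrees with intersecting the corresponding half-open real intervals, $[\ell_I,\rho_I)\cap[\ell_J,\rho_J) = [\max(\ell_I,\ell_J),\min(\rho_I,\rho_J))$, with empty intersections discarded as trivial bars. I expect the main obstacle to be purely bookkeeping: verifying that the common refinement $T$ captures \emph{every} scale at which any of the three Rips complexes changes, so that no homological feature of the product is lost or artificially split by the discretization. Once the three filtrations are carried onto a single compatible $\N$-indexing, the conclusion follows immediately from Theorem \ref{thm:IntroKunneth}.
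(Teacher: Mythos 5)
Your proposal is correct in outline, but it follows a genuinely different route from the paper's. The paper never discretizes: its categorical K\"unneth theorem (Theorem \ref{barcodeDPF}) is proved for diagrams indexed by an arbitrary separable totally ordered poset, so it applies verbatim to the $[0,\infty)$-indexed Rips filtrations, with barcodes supplied directly by Theorem \ref{thm:CrawleyBoevey}. The real content of the paper's proof is precisely the point you outsource to ``the remark following the theorem,'' namely why the categorical product in $\mathbf{Simp}$ computes the homology of the product of geometric realizations: the paper fixes total orders on $X$ and $Y$ and uses Lemmas \ref{lemma: rips} and \ref{osimpandsimp} to obtain
\[
|R_{\epsilon}(X \times Y)| = |R_{\epsilon}(X) \times R_{\epsilon}(Y)| \simeq |R_{\epsilon}(X) \oslash R_{\epsilon}(Y)| \cong |R_\epsilon(X)|\times |R_\epsilon(Y)|,
\]
naturally in $\epsilon$ (cf. Remark \ref{rmk:spaceskunneth}). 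So the trade is: your route needs only the $\N$-indexed theorem plus discretization bookkeeping, while the paper's route needs the real-indexed theorem but no bookkeeping, and it extends beyond finite metric spaces to any $(X,d_X),(Y,d_Y)$ whose Rips persistent homology is pointwise finite --- your discretization genuinely uses finiteness.

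One bookkeeping point, which you flagged but did not resolve, actually requires a fix. Because the Rips complex (\ref{eq:DefRips}) uses strict inequality, each filtration is constant on the half-open intervals $(t_i, t_{i+1}]$ between consecutive critical values and changes once more just \emph{after} the largest critical value, when the top-diameter simplices enter. Hence sampling only at $T$ misses the final (contractible) stage: for $X$ a two-point space at distance $1$, sampling at $T=\{0,1\}$ yields two infinite bars in degree $0$ instead of one infinite bar and one finite bar. You must adjoin one index beyond $\max T$, after which all three filtrations really are constant, so no further indices are needed. Relatedly, under the strict-inequality convention the continuous bars have the form $(\ell,\rho]$ rather than $[\ell,\rho)$; this is harmless for your final translation step, since the reindexing map is monotone, so preimages of intervals are intervals and intersections pull back to intersections.
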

\noindent We envision for this type of result to be used in abstract persistent computations, as well as
in the design of new and more efficient persistent homology algorithms.

Let us  see an example  illustrating Theorem \ref{thm:IntroKunneth} and contrasting the categorical and tensor product filtrations.
Indeed, consider the filtered simplicial complexes $\mathcal{ K }$ and $\mathcal{ L }$ shown in Figure \ref{fig:filteredK}.
\begin{figure}[!htb]
\centering
	\includegraphics[width=0.7\textwidth]{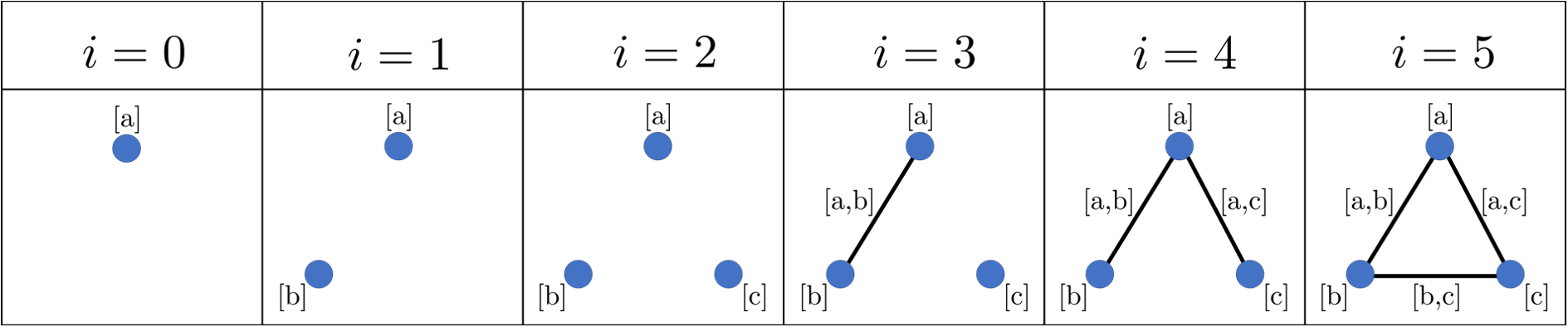}
	\caption{Filtered simplicial complex $\mathcal{K}=\mathcal{L}$, with $ K_i = K_5$ for $i\geq 5$.}
	\label{fig:filteredK}
\end{figure}

The  two  filtrations $\mathcal{K} \times \mathcal{L}$ and $\mathcal{K} \otimes \mathcal{L}$ are shown in Figure \ref{fig:filteredKL}.
The products  $K_i\times L_j$  are computed in the category of \emph{ordered  simplicial complexes}
using the order $a \leq b \leq c$
(see \ref{expl:oSimp} and \ref{def:oSimp} for definitions).

\begin{figure}[!htb]
\centering
\begin{subfigure}{0.4\textwidth}
    \centering
	\includegraphics[width= \textwidth]{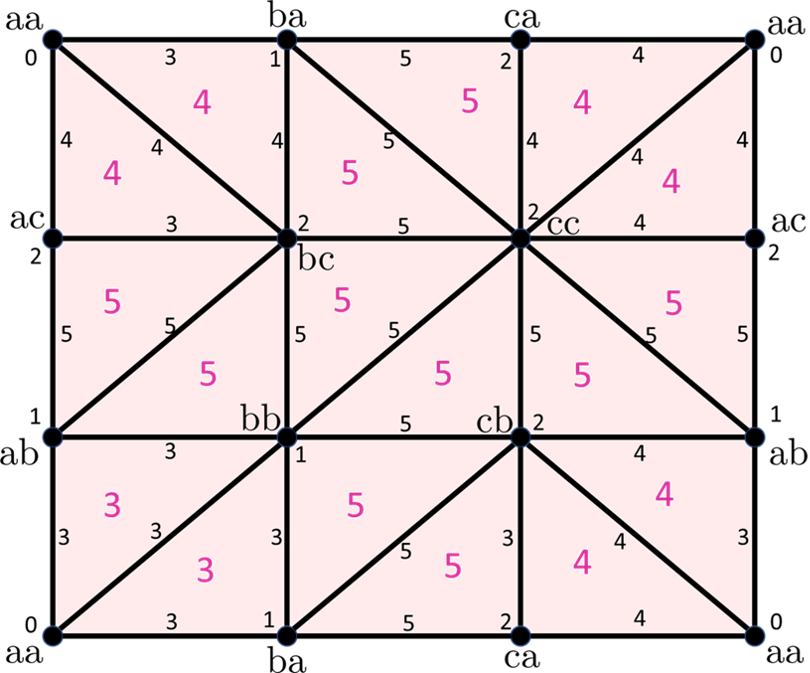}
\end{subfigure}
\qquad
\begin{subfigure}{0.4\textwidth}
    \centering
	\includegraphics[width= \textwidth]{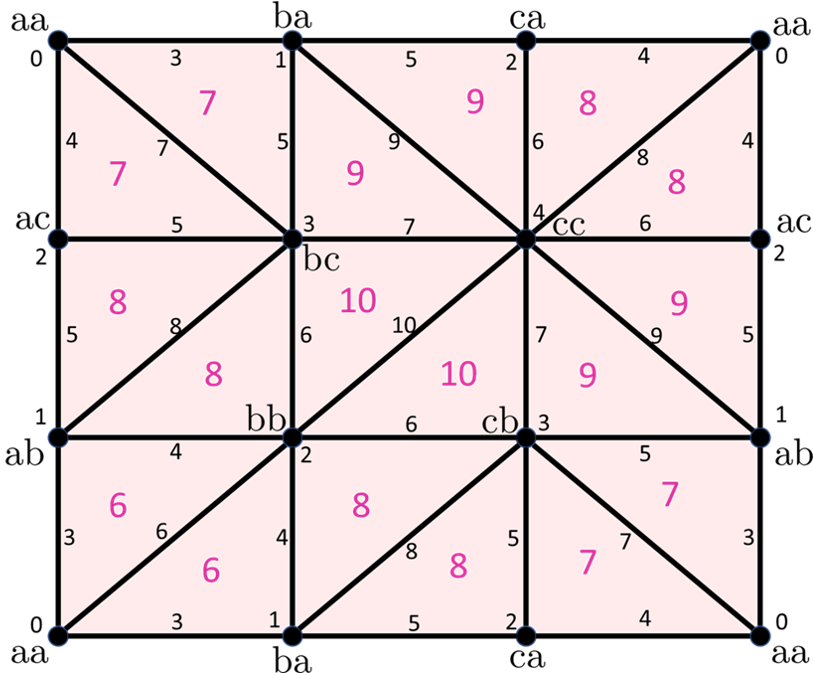}
\end{subfigure}
	\caption{The filtrations    $\mathcal{K} \times \mathcal{L}$ (left) and $\mathcal{K} \otimes \mathcal{L}$ (right). The numbers denote the filtration index at which the corresponding simplices enter the filtration.}
	\label{fig:filteredKL}
\end{figure}

When comparing $\mathcal{K}\times \mathcal{L}$ and $\mathcal{K}\otimes \mathcal{L}$, the first thing to note is the heterogeneity
of indices in   $\mathcal{K}\otimes \mathcal{L}$.
This suggests that the tensor product filtration has  more short-lived birth-death events than the categorical product,
which is supported by the formulas in Theorem \ref{thm:IntroKunneth}.
That is, the barcodes of $\mathcal{K}\otimes \mathcal{L}$ are expected to be ``noisier'' (i.e., with more short intervals) than those of $\mathcal{K}\times \mathcal{L}$.
The barcodes for $\mathcal{K}\times \mathcal{L}$ and $\mathcal{K}\otimes \mathcal{L}$ from Figure \ref{fig:filteredKL}, over any field  $\F$,
   are shown in Figure \ref{fig:barcodes}.
\begin{figure}[!htb]
	\includegraphics[width=\linewidth]{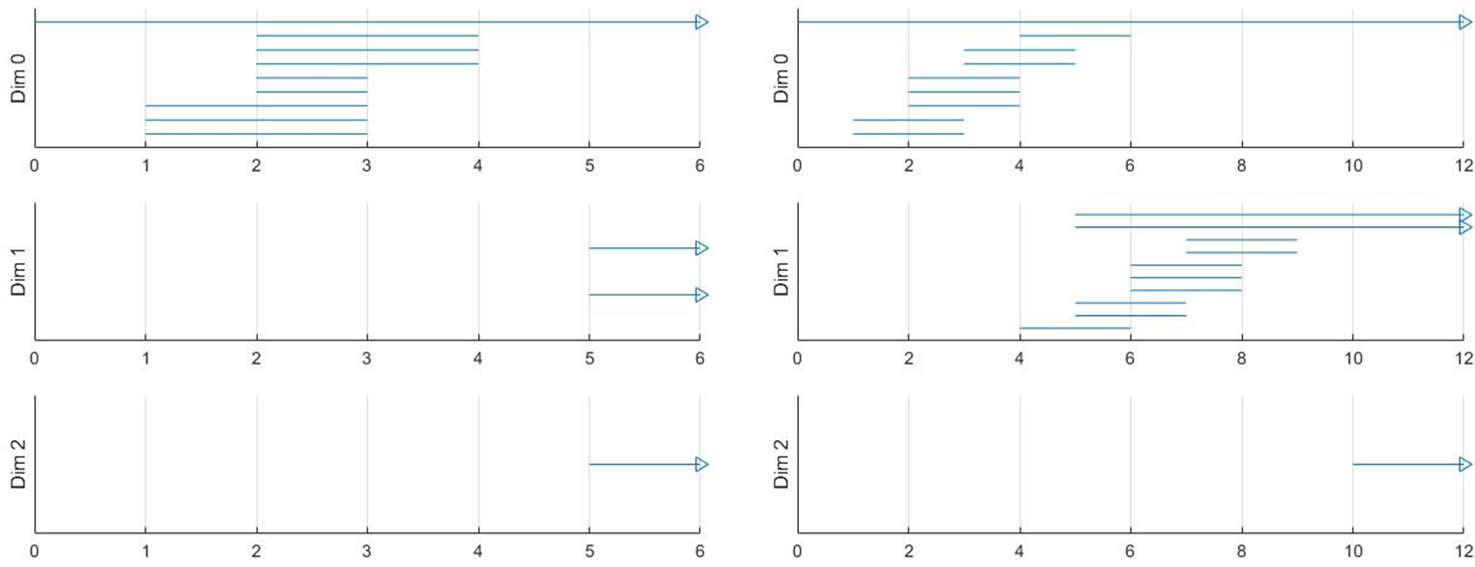}
	\caption{Barcodes for $\mathcal{K} \times \mathcal{L}$ (left) and $\mathcal{K}\otimes\mathcal{L}$ (right) in dimensions $0$, $1$, and $2$.}
	\label{fig:barcodes}
\end{figure}

As described in Theorem \ref{thm:IntroKunneth},
these barcodes are  related to those of  the complexes   $\mathcal{K}$ and $\mathcal{L}$,
$\bcd(\mathcal{K}) =  \lbrace [0,\infty)_0, [1,3)_0, [2,4)_0, [5,\infty)_1  \rbrace  = \bcd(\mathcal{L})$, where the subscripts denote
homological dimension, as shown in
Tables \ref{table: diagonalbarcodes} and \ref{table: tensorbarcode}.
\begin{table}[!htb]
\parbox{.43\linewidth}{
\centering
		\newcolumntype{L}{>{\centering\arraybackslash}m{2.9cm}}
		\newcolumntype{l}{>{\centering\arraybackslash}m{1.7cm}}
		\begin{tabular}{| l | L | @{}m{0pt}@{}}
			\hline
			$\bcd(\mathcal{K}\times \mathcal{L})$ &  $\bcd(\mathcal{K}) \;\; \big| \;\; \bcd(\mathcal{L})$ & \\[.2cm]
			\hline
			$[5,\infty)_2$ & $[5,\infty)_{1}\cap [5,\infty)_{1}$&\\[.2cm]
			\hline
			$[5,\infty)_1$ & $[5,\infty)_{1}\cap[0,\infty)_{0}$&\\[.2cm]
			\hline
			$[5,\infty)_1$ & $[0,\infty)_{0}\cap[5,\infty)_{1}$&\\[.2cm]
			\hline
			$[0,\infty)_0$ & $[0,\infty)_{0}\cap[0,\infty)_{0}$&\\[.2cm]
			\hline
			$[2,4)_0$ &  $[2,4)_{0}\cap[2,4)_{0}$&\\[.2cm]
			\hline
			$[2,4)_0$ &  $[2,4)_{0}\cap[0, \infty)_{0}$&\\[.2cm]
			\hline
			$[2,4)_0$ & $[0,\infty)_{0}\cap[2,4)_{0}$&\\[.2cm]
			\hline
			$[1,3)_0$ &  $[1,3)_{0},\cap[1,3)_{0}$&\\[.2cm]
			\hline
			$[1,3)_0$ &  $[1,3)_{0}\cap[0, \infty)_{0}$&\\[.2cm]
			\hline
			$[1,3)_0$ &  $[0,\infty)_{0}\cap[1,3)_{0}$&\\[.2cm]
			\hline
			$[2,3)_0$ &  $[2,4)_{0}\cap[1,3)_{0}$&\\[.2cm]
			\hline
			$[2,3)_0$ &  $[1,3)_{0}\cap[2,4)_{0}$&\\[.2cm]
			\hline
		\end{tabular}
\caption{}
\label{table: diagonalbarcodes}
}
\hfill
\parbox{.55\linewidth}{
\centering
	\newcolumntype{L}{>{\centering\arraybackslash}m{4.2cm}}
	\newcolumntype{l}{>{\centering\arraybackslash}m{1.8cm}}
	\begin{tabular}{| l | L | @{}m{0pt}@{}}
		\hline		$\bcd_1(\mathcal{K}\otimes \mathcal{L})$ &  $\bcd(\mathcal{K}) \;\; \big| \;\; \bcd(\mathcal{L})$ &\\[.4cm]
		\hline
		$[5,\infty)_1$ &  $\left(0+[5,\infty)_{1}\right)\cap\left(5+[0,\infty)_{0}\right)$ &\\[0.2cm]
		\hline
		$[5,\infty)_1$ & $\left(5+[0,\infty)_{0}\right)\cap\left(0+[5,\infty)_{1}\right)$&\\[.2cm]
		\hline
		$[7,9)_1$  &  $\left(2+[5,\infty)_{1}\right)\cap\left(5+[2,4)_{0}\right)$&\\[.2cm]
		\hline
		$[7,9)_1$ &  $\left(5+[2,4)_{0}\right)\cap\left(2+[5,\infty)_{1}\right)$&\\[.2cm]
		\hline
		$[6,8)_1$ &  $\left(1+[5,\infty)_{1}\right)\cap\left(5+[1,3)_{0}\right)$ &\\[.2cm]
		\hline
		$[6,8)_1$ &  $\left(5+[1,3)_{0}\right)\cap\left(1+[5,\infty)_{1}\right)$&\\[.2cm]
		\hline
		$[6,8)_1$ & $\left(4+[2,4)_{0}\right)\cap\left(4+[2,4)_{0}\right)$&\\[.2cm]
		\hline
		$[5,7)_1$ &  $\left(3+[2,4)_{0}\right)\cap\left(4+[1,3)_{0}\right)$&\\[.2cm]
		\hline
		$[5,7)_1$ &  $\left(4+[1,3)_{0}\right)\cap\left(3+[2,4)_{0}\right)$&\\[.2cm]
		\hline
		$[4,6)_1$ &  $\left(3+[1,3)_{0}\right)\cap\left(3+[1,3)_{0}\right)$&\\[.2cm]
		\hline
	\end{tabular}
	\caption{}
	\label{table: tensorbarcode}
}
\end{table}

\subsection{Sketch of proof}
In establishing the formulae, the categorical product is the simpler of the two.
Given two  diagrams  of topological spaces and continuous maps
\begin{eqnarray*}
\mathcal{X} &=& \{  f_{\alpha' ,\alpha}: X_\alpha \rightarrow X_{\alpha'}\}_{\alpha\preceq \alpha' \in \mathcal{P}} \\
\mathcal{Y} &=& \{  g_{\alpha' ,\alpha}: Y_\alpha \rightarrow Y_{\alpha'}\}_{\alpha\preceq \alpha' \in \mathcal{P}}
\end{eqnarray*}
indexed by a separable toally ordered set  $(\mathcal{P} ,\preceq)$ --- e.g., the reals ---
 so that $f_{\alpha,\alpha}$ is the identity of $X_\alpha$ and
$f_{\alpha'',\alpha'}\circ f_{\alpha',\alpha} = f_{\alpha'',\alpha}$ for
$\alpha \preceq \alpha' \preceq \alpha''$ (similarly for $g_{\alpha',\alpha}$),
we let $\mathcal{X}\times \mathcal{Y} = \{f_{\alpha',\alpha} \times g_{\alpha',\alpha} : X_\alpha \times Y_\alpha \longrightarrow X_{\alpha'}\times Y_{\alpha'}\}_{\alpha \preceq \alpha' \in \mathcal{P}}$.
The classical topological K\"unneth theorem implies that the
 induced $\mathcal{P}$-indexed diagram
\[
\big\{
H_n(X_\alpha \times Y_\alpha; \F) \longrightarrow H_n(X_{\alpha'}\times Y_{\alpha'} ;\F)
\big\}_{\alpha \preceq \alpha' \in \mathcal{P}}
\]
is isomorphic to
\[
\left\{
\bigoplus_{i+j =n}H_i(X_\alpha;\F) \otimes_\F H_j( Y_\alpha; \F)
\longrightarrow
\bigoplus_{i+j = n }H_i(X_{\alpha'};\F)\otimes_\F H_j(Y_{\alpha'} ;\F)
\right\}_{\alpha \preceq  \alpha' \in \mathcal{P}}
\]
and  a barcode computation, in the pointwise finite case, yields the result.

The tensor product of   $\mathcal{X} = \{X_i \subset X_{i+1}\}_{i\in \N}$
and  $\mathcal{Y} = \{Y_i \subset  Y_{i+1}\}_{i\in \N}$ requires a bit more work.
Define the persistent homology of $\mathcal{X}$  as
\[
PH_n(\mathcal{X} ; \F ) := \bigoplus\limits_{i\in \N} H_n(X_i ; \F)
\]
This object has the structure of  a graded module over the polynomial ring
$\F[t]$ in one variable $t$,
where the product of $t$ with a homogeneous element of degree $i$ reduces to
applying the linear transformation induced by the inclusion $X_i \hookrightarrow X_{i+1}$.
For purely algebraic reasons---specifically the algebraic K\"unneth theorem for chain complexes of flat modules over a PID---and
a new persistent version of the Eilenberg-Zilber theorem    (Theorem \ref{EZnew}),
one obtains
 a natural short exact sequence
\begin{align}\label{eq:IntroSequence}
\begin{split}
0 \rightarrow
\bigoplus_{i+j = n} PH_i(\mathcal{X}; \F) \otimes_{\F[t]} PH_j&(\mathcal{Y};\F)
\rightarrow
PH_n(\mathcal{X}\otimes \mathcal{Y};\F)
\rightarrow \\
& \bigoplus_{i+j = n}
\mathsf{Tor}_{\F[t]} (PH_i(\mathcal{X};\F), PH_{j-1}(\mathcal{Y};\F))
\rightarrow 0
\end{split}
\end{align}
which splits, though not naturally.
This is why we think of $\mathcal{X}\otimes \mathcal{Y}$ as the algebraic answer
to asking what an appropriate  notion  of  product filtration is:
unlike $\mathcal{X\times  Y}$, the tensor product $\mathcal{X \otimes Y}$ fits into the type of short exact sequence
one would expect  in a K\"unneth theorem for persistent homology.
The barcode formula in Theorem \ref{thm:IntroKunneth}
follows, in the pointwise finite case, from the existence of  graded $\F[t]$-isomorphisms
\[
PH_i(\mathcal{X};\F)
\;\;\cong \;\;
\bigoplus\limits_{[\ell, \rho)\in \bcd_i(\mathcal{X};\F)} \left( t^\ell \cdot \F[t]\right)\big/\left(t^\rho\right)
\]
with the convention that $t^\infty = 0$, using that (\ref{eq:IntroSequence}) splits, and computing the tensor
and Tor $\F[t]$-modules explicitly  in terms of   operations on intervals.

We further establish that (\ref{eq:IntroSequence}) still  holds when the inclusions $X_i \hookrightarrow X_{i+1}$ and
$Y_i \hookrightarrow Y_{i+1}$ are replaced by continuous maps $f_i$ and $g_i$, respectively.
In this case $\otimes$ is replaced by a generalized tensor product $\otimes_{\mathbf{g}}$,   equivalent  to $\otimes$ for
inclusions, and   defined as follows:  the space
$(\mathcal{X}\otimes_{\mathbf{g}} \mathcal{Y})_k$ is the \emph{homotopy colimit} of the  functor
from the poset  $\triangle_k = \{(i,j) \in \N^2 : i + j \leq k\}$ to $\mathbf{Top}$ sending $(i,j)$ to $X_i\times Y_j$,
and $(i \leq i', j \leq j' )$ to the map $ (f_{i'-1}\circ \dots \circ f_i)\times (g_{j'-1}\circ \dots \circ g_j): X_i \times Y_j \rightarrow X_{i'}\times Y_{j'}$.
The map $(\mathcal{X}\otimes_{\mathbf{g}}\mathcal{Y})_k \rightarrow (\mathcal{X}\otimes_{\mathbf{g}}\mathcal{Y})_{k+1}$
is the one induced at the level of homotopy colimits by the inclusion $\triangle_k \subset \triangle_{k+1}$
of indexing posets.

\subsection{Organization of the paper}
Section \ref{background} is devoted to the algebraic background needed for the paper;
it takes  a categorical viewpoint and introduces notions such as persistent homology,
the classical K\"unneth theorems,
 and homotopy colimits.
In section \ref{filtrations} we define products of (diagrams of) spaces and study their properties,
while sections \ref{Dthm} and  \ref{TPthm}
contain  the proofs of our persistent K\"unneth formulae for the categorical and generalized tensor products, respectively.
In section \ref{application} we present two applications of the K\"{u}nneth formula for the categorical product in the setting of Vietoris-Rips complexes. The first application is to faster computations of the persistent homology of $\mathcal{R}(X\times Y,d_{X\times Y})$,
and the second application revisits theoretical results about the Rips persistent homology of the $n$-torus.

\subsection{Related work}
Different versions of the algebraic K\"{u}nneth theorem in persistence have appeared in the last two years. 
In \cite[Proposition 2.9]{algkunneth}, the authors prove a K\"{u}nneth formula for the tensor product of filtered chain complexes, 
while \cite[Section 10]{pershomologicalalgebra} establishes  K\"{u}nneth theorems for the graded tensor product and the sheaf tensor product of persistence modules. 
In \cite{summetric}, the authors prove a K\"{u}nneth formula relating the persistent homology of metric spaces $(X,d_X), (Y,d_Y)$ to the persistence of their cartesian product
 equipped with the $L^1$ (sum) metric $(X\times Y, d_X + d_Y)$, for homological dimensions $n=0,1$.
The persistent K\"unneth theorems proven here are the first to start at the level of filtered spaces, 
identifying the appropriate product filtrations and resulting barcode formulas. 
Compared to the   sum metric  \cite{summetric},  
we remark that our results for the maximum metric hold in all homological dimensions.

\section{Preliminaries}\label{background}
This section deals with   the necessary algebraic
background for later portions of the paper.
We hope   this will make the presentation more  accessible to a broader audience,
though experts should feel free to skip to Section \ref{filtrations} and come back  as necessary.
We provide a brief review of persistent homology, the rank invariant, the algebraic K\"unneth formula for the tensor product of  chain complexes of modules over a Principal Ideal Domain (PID), as well as an explicit
model for the homotopy colimit of a diagram of topological spaces.
For a more detailed treatment we refer
the interested reader to the references therein.

\subsection{Persistent homology}
The framework we use here is that  of diagrams
in a category (see also \cite{categorification}).
Besides the definitions and the classification via barcodes, the main point from this section is that the persistent homology of a diagram of spaces
is isomorphic to the standard homology of the associated persistence chain complex    (see \cite{CPH} and Theorem \ref{PH}).

\begin{definition}
If $\mathbf{C}$  and $\mathbf{I}$  are categories, with $\mathbf{I}$ small (i.e., so that its objects form a set), then we denote by $\mathbf{C}^{\mathbf{I}}$ the category whose objects are functors from $\mathbf{I}$ to $\mathbf{C}$, and whose morphisms are natural transformations between said functors.
The objects of the functor category
$\mathbf{C}^\mathbf{I}$ are often referred to in the literature as
$\mathbf{I}$-\emph{indexed diagrams in} $\mathbf{C}$.
Two diagrams $D,D' \in \mathbf{C^I}$ are said to be isomorphic, denoted $D \cong D'$, if they are naturally isomorphic as functors.
\end{definition}

Here is an example describing the main type of indexing category we will consider throughout the paper.

\begin{example}
	Let $(\mathcal{P}, \preceq)$ be a  partially ordered set (i.e., a poset). Let $\mathbf{P}$ denote the category whose objects are the elements  of $\mathcal{P}$, and a unique morphism $x \rightarrow y$ for each pair   $x \preceq y$ in $\mathcal{P}$.
We call $\mathbf{P}$ the \emph{poset category} of $\mathcal{P}$.
\end{example}

As for the target category $\mathbf{C}$, we will  mostly be interested in spaces and their algebraic invariants.

\begin{example}\label{Ftop}
If $\textbf{Top} $ is the category of topological spaces and continuous maps,
and $\II$ is a thin category (i.e., with at most one morphism between any two objects),
then each object of $\mathbf{Top^I}$ is a collection $\mathcal{X} = \lbrace  f_{j,i}: X_i \rightarrow X_j \rbrace_{i\rightarrow j \in \mathbf{I}}$ of topological spaces $X_i$, and  continuous maps  $f_{j,i}: X_i \longrightarrow X_j$ for each morphism $i \rightarrow j$ in $\II$ satisfying:
$f_{i,i}$ is the identity of $X_i$, and
$f_{k,j} \circ f_{j,i} = f_{k,i}$ for any $i \rightarrow j \rightarrow k$.
Similarly, a morphism $\phi$ from $\mathcal{ X } = \lbrace   f_{j,i}: X_i \rightarrow X_{j} \rbrace$ to $\mathcal{ Y } = \lbrace   g_{j,i}: Y_i \rightarrow Y_{j} \rbrace$ is a  family of maps $\phi_i: X_i \rightarrow Y_i$ such that  $ g_{j,i} \circ \phi_i = \phi_{j} \circ f_{j,i} $ for every $i\rightarrow j$.
\end{example}

\begin{example}\label{expl:MotivationTDA}
Let $\mathbb{N}= \{0,1,2,\ldots \}$   with its usual (total) order, and let $\NN$ be its poset category.
$\NN$-indexed diagrams in $\mathbf{Top}$ arise in Topological Data Analysis (TDA) as follows.
Let $\mathbb{M}$ be a metric space, let $\mathbb{X} \subset \mathbb{M}$ be a subspace and let $X\subset \mathbb{M}$ be  finite.
In applications $X$ is the data  one observes (e.g., images, text documents, molecular compounds, etc), obtained by sampling from/around  $\mathbb{X}$ (the  ground truth, which is unknown in practice),
both sitting in an ambient space $\mathbb{M}$.
With the goal of estimating the topology of $\mathbb{X}$ from $X$,
 one starts by letting $X^{(\epsilon)}$ be the union of open balls
in $\mathbb{M}$ of radius $\epsilon\geq 0$ centered at points of $X$.
Hence $X^{(\epsilon)}$ provides a---perhaps rough---approximation to the
topology of $\mathbb{X}$ for each $\epsilon\geq 0$, and  any discretization $0 = \epsilon_0 < \epsilon_1 < \cdots $
of $[0, \infty)$ yields an  object in $\mathbf{Top}^{\NN}$ as follows:
$\mathbb{N}\ni i \mapsto X^{(\epsilon_i)}$,
and the inclusion $X^{(\epsilon_i)}\hookrightarrow X^{(\epsilon_j)}$
is the  map associated to $i\leq j$.
Such objects allow one
to avoid   optimizing  the choice of a single $\epsilon$,
leading to several recovery theorems and algorithms for topological inference \cite{oudotBook}.
\end{example}

\begin{remark}
If  each $f_{i}: X_i \rightarrow X_{i+1}$  in  $\mathcal{X} \in \mathbf{Top}^{\NN }$ is an inclusion,
e.g. as
 in Example \ref{expl:MotivationTDA},
then $\mathcal{X}$ defines a \textit{filtered topological space}.
Specifically, the union of the $X_i$'s.
Recall that a filtered topological space consists of a  space
$X$ together with a  filtration  $X_0 \subset X_1 \subset \cdots \subset X$.
\end{remark}

This observation motivates the following definition.

\begin{definition}
We  call $\mathcal{X} \in \mathbf{Top}^{\NN} $ a \emph{filtered space}
if all its maps are inclusions.
The collection of all filtered spaces forms a full subcategory of $\textbf{Top}^{\textbf{N}}$ denoted $\textbf{FTop}$.
\end{definition}

\begin{example}\label{expl:TelescopeReplacement}
Let $\mathcal{X} \in \mathbf{Top}^\NN$,
and let $\mathcal{T}(\mathcal{X})  \in \mathbf{FTop}$
be the functor defined as follows.
For $j\in \mathbb{N}$, let
$\mathcal{T}_j(\mathcal{X})$ be the \emph{mapping
telescope} of
\[
X_0 \xrightarrow{f_0}
X_1 \xrightarrow{f_1}
\cdots
\xrightarrow{f_{j-2}}
X_{j-1}
\xrightarrow{f_{j-1}}
X_j.
\]
Explicitly,
$\mathcal{T}_j(\mathcal{X})$
is the quotient space
\[
\mathcal{T}_j(\mathcal{X})
=
X_j \times \{j\}
\sqcup
\bigg(
\bigsqcup_{i < j} X_i \times [ i, i +1]
\bigg)
\Big/ (x, i+1) \sim (f_i(x), i+1).
\]
It readily follows that
$\mathcal{T}_j(\mathcal{X})
\subset \mathcal{T}_k(\mathcal{X})$
whenever $j\leq k$,
and therefore $\mathcal{T}(\mathcal{X})$
defines an object in $\mathbf{FTop}$
called the \emph{telescope filtration} of
$\mathcal{X}$.
\end{example}

As is typical in algebraic topology,
one is interested in the interplay between diagrams
of spaces and  diagrams of algebraic objects.

\begin{example}\label{PersMod}
Let $R$ be a commutative ring with unity, and let
$\textbf{Mod}_{R}$ be the category of (left) $R$-modules and $R$-morphisms.
The typical objects in $\mathbf{Mod}_R^\mathbf{I}$
one encounters in TDA arise from objects $\mathcal{X} \in \mathbf{Top}^\mathbf{I} $
by fixing $n\in \mathbb{N}$ and taking singular homology in dimension $n$ with coefficients in $R$.
Indeed, $H_n(X_i ;R )$ is an $R$-module for each   $i \in \mathbf{I}$,
and for each morphism $i \rightarrow j$ in $\mathbf{I}$,
the  map $\mathcal{X}(i \rightarrow j) : X_i \longrightarrow X_j$
induces---in a functorial manner---a well-defined $R$-morphism from $H_n(X_i;R)$ to $H_n(X_j;R)$.
The resulting object in $\mathbf{Mod}_R^\mathbf{I}$ will  be denoted
$H_n(\mathcal{X};R)$.
\end{example}

\begin{definition}\label{def:SumIrredTensor}
Let $\mathcal{M}, \mathcal{N} \in \mathbf{Mod}^\mathbf{I}_R $,
and let
$\mathcal{M}\oplus \mathcal{N} : \mathbf{I}
\longrightarrow \mathbf{Mod}_R$ be the
functor sending  $i \in  \mathbf{I}$
to $(\mathcal{M} \oplus \mathcal{N})(i) :=
M_i \oplus N_i$,   and  each morphism
$i \rightarrow j$ in $\mathbf{I}$
to
\[
(\mathcal{M} \oplus \mathcal{N})(i \rightarrow j) := \mathcal{M}(i \rightarrow j) \oplus \mathcal{N}(i \rightarrow j).
\]
We say that $\mathcal{M} \in \mathbf{Mod}^\mathbf{I}_R $
is  \emph{indecomposable} if  $\mathcal{M} \cong \mathcal{N} \oplus \mathcal{O}$ only when either
$\mathcal{N}$ or $\mathcal{O}$ is the zero functor.
Similarly, let $\mathcal{M} \otimes \mathcal{N} : \II \longrightarrow \mathbf{Mod}_R$ be the functor sending $i \in \II$ to
$M_i \otimes_R N_i$ and $i\rightarrow j$ to $\mathcal{M}(i \rightarrow j) \otimes_R \mathcal{N}(i\rightarrow j)$,
where the tensor product $\otimes_R$  is the usual one for $R$-modules
and $R$-morphisms.
\end{definition}

\begin{example}\label{expl:interval}
Recall that an \emph{interval} in a poset $\mathcal{P}$ is a set  $  I\subset \mathcal{P}$ for which
$i,k \in I$ and $ i \preceq j \preceq k$
always imply $j\in I$.
An interval $I \subset \mathcal{P}$ defines
an object $\mathds{1}_I $ in $\mathbf{Mod}_R^\mathbf{P}$ as follows: for
 $j\preceq j'$ in $ \mathcal{P}$, let
\[
\mathds{1}_{I}(j)
:=
\left\{
  \begin{array}{cl}
    R &  \mbox{if }j\in I\\[.1cm]
    0 & \hbox{else}
  \end{array}
\right.
\hspace{1cm}
\mbox{and}
\hspace{1cm}
\mathds{1}_{I}(j\preceq j')
:=
\left\{
  \begin{array}{cl}
    id_R  &  \mbox{if }j,j'\in I\\[.1cm]
    0 & \hbox{else}
  \end{array}
\right.
\]
The  $\mathds{1}_I$'s
are called \emph{interval diagrams},
and if $\mathcal{P}$ is totally ordered, then
they are indecomposable in $\mathbf{Mod}^\mathbf{P}_R$
\cite[Lemma 4.2]{categorification}.
Moreover, if $I,J\subset \mathcal{P}$ are intervals, then
the tensor product of the corresponding interval diagrams satisfies
\[
\mathds{1}_I \otimes \mathds{1}_J \cong \mathds{1}_{I \cap J}.
\]
Indeed, $\mathds{1}_I \otimes \mathds{1}_J(r) \cong  0 $ if $r\notin I\cap J$,
and
$\mathds{1}_I \otimes \mathds{1}_J(r) \cong  R \otimes_R R \cong R$  if $r\in I\cap J$,
where the last isomorphism is given by multiplication in $R$.
Since applying  $id_R \otimes_R id_R$ and then multiplying
equals  multiplying and then applying $id_R$,
the result follows.
\end{example}

Interval diagrams are fundamental  building blocks in $\mathbf{Mod}_\F^\mathbf{P}$   \cite[Theorem 1.1]{crawley2015decomposition}:

\begin{theorem}\label{thm:CrawleyBoevey}
Let $\mathbb{F}$ be a field and
let $( \mathcal{P}, \preceq)$ be a totally
ordered set.
Suppose that $\mathcal{P}$ is separable with respect to the order topology, and
that $\mathcal{V} \in
\mathbf{Mod}_\mathbb{F}^\mathbf{P} $
satisfies $\dim_\mathbb{F}\mathcal{V}(j)<\infty $
for all $j\in \mathcal{P}$.
Then, there exists a multiset  of intervals $I \subset \mathcal{P}$  called the
\emph{barcode} of $\mathcal{V}$,  denoted $\mathsf{bcd}(\mathcal{V})$, and
so that
\[
\mathcal{V}
\cong
\bigoplus_{I \in \mathsf{bcd}(\mathcal{V})}
\mathds{1}_I.
\]
Moreover, $\mathsf{bcd}(\mathcal{V})$  only depends on---and uniquely determines---the isomorphism type of $\mathcal{V}$.

\end{theorem}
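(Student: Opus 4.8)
The plan is to separate the statement into an \emph{existence} claim—that $\mathcal{V}$ is isomorphic to some direct sum of interval diagrams—and a \emph{uniqueness} claim—that the multiset of intervals so obtained is an invariant of the isomorphism type of $\mathcal{V}$. I would dispose of uniqueness first, as it is the more formal half and only requires understanding the endomorphism rings of the summands. For any interval $I \subset \mathcal{P}$, a natural endomorphism of $\mathds{1}_I$ is forced by the identity transition maps to act as a single scalar on every copy of $\mathbb{F}$, so $\mathrm{End}_{\mathbf{Mod}_\mathbb{F}^\mathbf{P}}(\mathds{1}_I) \cong \mathbb{F}$; in particular this ring is local. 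Granting existence, uniqueness then follows from the Krull--Remak--Schmidt--Azumaya theorem: any two decompositions of a module into summands with local endomorphism rings have isomorphic summands with equal multiplicities. Since $\mathds{1}_I \cong \mathds{1}_J$ forces $I = J$ by comparing supports, this pins down a well-defined multiset $\mathsf{bcd}(\mathcal{V})$ depending only on $\mathcal{V}$ up to isomorphism.

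For existence I would proceed in two stages: first decompose $\mathcal{V}$ into indecomposables, then identify each indecomposable as an interval diagram. The identification is where the \emph{total} order enters. If $\mathcal{W}$ is an indecomposable pfd summand, then its endomorphism ring has no nontrivial idempotents, and one shows using the linear order that (i) the support $\{j : \mathcal{W}(j) \neq 0\}$ must be an interval—otherwise a gap would split off a summand—(ii) $\dim_\mathbb{F} \mathcal{W}(j) = 1$ on the support, since a higher-dimensional fibre would let one split off a complementary thin summand, and (iii) the transition maps between points of the support are isomorphisms, hence identities after rescaling the bases. Together these give $\mathcal{W} \cong \mathds{1}_I$, and the excerpt's Example~\ref{expl:interval} already records the converse indecomposability. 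The nontrivial input is therefore the existence of \emph{any} decomposition of $\mathcal{V}$ into indecomposables with local endomorphism rings.

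The main obstacle is exactly this last point when $\mathcal{P}$ is infinite, e.g. $\mathcal{P} = \mathbb{R}$: since $\mathcal{V}$ may be infinite-dimensional in total, peeling off one interval summand at a time need not exhaust it, and one cannot naively choose compatible bases index-by-index. When $\mathcal{P}$ is finite this difficulty evaporates—$\mathcal{V}$ is a representation of a type-$A$ quiver and Gabriel's theorem yields the interval decomposition directly—so the real work is a controlled passage to the infinite case. Here I would use separability: fix a countable dense subset $\mathcal{Q} \subset \mathcal{P}$ exhausted by finite subsets, decompose each finite restriction into intervals, and use the pointwise-finiteness of $\mathcal{V}$ to show that the dimensions of the relevant image and kernel subquotients stabilize, so the finite decompositions may be chosen coherently and assembled into a global decomposition of $\mathcal{V}|_{\mathcal{Q}}$ as a genuine countable colimit rather than an unruly transfinite one. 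Density then lets each interval of $\mathcal{Q}$ complete, via its cuts, to an interval of $\mathcal{P}$, transporting the decomposition to all of $\mathcal{V}$. This simultaneous construction—guaranteeing the summands are intervals, pairwise independent, and jointly spanning—is the crux of Crawley--Boevey's argument \cite{crawley2015decomposition}, and with it in hand the uniqueness of the first paragraph identifies the resulting multiset as $\mathsf{bcd}(\mathcal{V})$.
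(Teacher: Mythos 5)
The first thing to note is that the paper does not prove this statement at all: it is Crawley--Boevey's decomposition theorem, imported verbatim with the citation \cite[Theorem 1.1]{crawley2015decomposition}, so there is no internal proof to compare you against---only the cited source. Measured against that, your uniqueness argument is correct and standard: $\mathrm{End}(\mathds{1}_I)\cong\mathbb{F}$ is local, Krull--Remak--Schmidt--Azumaya applies, and comparing supports shows $\mathds{1}_I\cong\mathds{1}_J$ forces $I=J$. Your plan for existence (first decompose into indecomposables, then show an indecomposable pointwise finite diagram over a totally ordered set is thin with interval support and identity transitions) is also a legitimate route---it is essentially the one taken in Botnan and Crawley--Boevey's later treatment---though be aware that the original paper cited here argues differently, building the interval summands directly from functorial filtrations (images from below and kernels from above, indexed by cuts of all of $\mathcal{P}$) without ever passing through an abstract indecomposable decomposition; be aware also that your step (ii) (``a higher-dimensional fibre lets one split off a thin summand'') is a genuine argument in the infinite setting, not an observation.

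The genuine gap is in the mechanism you propose for the crux. Restriction to a countable dense subset $\mathcal{Q}\subset\mathcal{P}$ is \emph{lossy}, so no procedure that decomposes $\mathcal{V}|_{\mathcal{Q}}$ and then ``completes via cuts'' can recover $\mathsf{bcd}(\mathcal{V})$. Concretely, take $\mathcal{P}=\mathbb{R}$ and $\mathcal{Q}=\mathbb{Q}$: the pointwise finite module $\mathds{1}_{\{\pi\}}$ restricts to the zero diagram; the non-isomorphic modules $\mathds{1}_{[\pi,\infty)}$ and $\mathds{1}_{(\pi,\infty)}$ have \emph{equal} restrictions to $\mathbb{Q}$, so the cut completion of the $\mathcal{Q}$-interval $(\pi,\infty)\cap\mathbb{Q}$ is ill-defined without consulting $\mathcal{V}(\pi)$ itself; and the pointwise finite module $\bigoplus_{p\in\mathbb{R}}\mathds{1}_{\{p\}}$ (the diagram that is $\mathbb{F}$ everywhere with all transition maps zero) shows that even an adaptively chosen countable $\mathcal{Q}$ must miss uncountably many summands. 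So the isomorphism type of $\mathcal{V}$ is simply not determined by $\mathcal{V}|_{\mathcal{Q}}$, and any correct proof must use the fibres at all points of $\mathcal{P}$ simultaneously---which is exactly what the functorial-filtration argument (or the Zorn's lemma argument for decomposition into indecomposables) accomplishes. Separately, the step ``dimensions stabilize, so the finite-stage decompositions may be chosen coherently'' is an assertion of precisely the hard point: stagewise interval decompositions are wildly non-unique, a basis choice at one stage need not extend to the next, and pointwise finiteness supplies no compactness that yields a coherent thread. Since in the end you delegate this to \cite{crawley2015decomposition}---the same citation the paper itself relies on---your write-up is fine as an annotated pointer to the literature, but the existence argument you actually sketch would not survive being made precise.
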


\begin{remark}
An object $\mathcal{ V } \in
 \mathbf{Mod}_\mathbb{F}^\mathbf{I} $ satisfying the condition $\dim_\mathbb{F}\mathcal{V}(j)<\infty $ \textit{for all} $j\in \mathbf{I}$ is  said to be \textit{pointwise finite}.
\end{remark}

\begin{definition}
 Let $\mathbb{F}$ and $\mathcal{P}$ be as in Theorem \ref{thm:CrawleyBoevey}, and
let $\mathcal{X} \in \mathbf{Top}^\mathbf{P}$ be so that $H_n(\mathcal{X}; \mathbb{F})$ is  pointwise finite.
The barcode of $H_n(\mathcal{X}; \mathbb{F})$, denoted
$\mathsf{bcd}_n(\mathcal{X} ;\mathbb{F})$, is the unique multiset of intervals in $\mathcal{P}$
such that
\[
H_n(\mathcal{X};\mathbb{F}) \cong
\bigoplus_{I\in \mathsf{bcd}_n(\mathcal{X}; \mathbb{F})}\mathds{1}_I.
\]
\end{definition}

It follows that $\mathsf{bcd}_n(\mathcal{X}; \mathbb{F})$ provides
a succinct description of the isomorphism type
of $H_n(\mathcal{X}; \mathbb{F})$.
The design of algorithms for the computation of barcodes, at least in the $\mathbb{N}$-indexed case, leverages a more concrete description of $H_n(\mathcal{X}; \mathbb{F})$ which we describe next.

\begin{definition}
For $\mathcal{M} =\{h_{j,i} : M_i \longrightarrow M_{j}\}_{i \preceq j \in \mathcal{P}}$  an object in $ \mathbf{Mod}_R^\mathbf{P}$,
let
\begin{equation}\label{eq:PersistenceModule}
P\mathcal{M} : = \bigoplus_{i\in \mathcal{P}} M_i.
\end{equation}
$P\mathcal{M}$ is called the \emph{persistence module} associated to $\mathcal{M}$.
\end{definition}

The  word module
stems from the following observation: if $\mathcal{ M } \in \mathbf{Mod}_R^\NN $, then $P\mathcal{M}$ is
a  graded module
over $R[t]$, the graded ring of polynomials
in a variable $t$.
Indeed,  the $R[t]$-module structure
is defined through multiplication by  $t$ as follows:
for $\mathsf{m} = (m_0,m_1,\ldots )\in P\mathcal{M}$  let
\[
t\cdot \mathsf{m} :=
\big(0 , h_0(m_0), h_1(m_1), \ldots\big)
\]
and extend the action to $R[t]$ in the usual way.
The graded nature of the multiplication
comes from noticing that
$t^k \cdot M_i \subset M_{i +k}$
for every $i,k \in \mathbb{N}$.

If $\phi: \mathcal{M} \longrightarrow \mathcal{N}$ is a morphism in
$\mathbf{Mod}_R^\NN$, then
it can be readily checked that
$P\phi := \oplus_i \phi_i : P\mathcal{M} \longrightarrow P\mathcal{N}$
is a graded $R[t]$-morphism,
and that $P$ defines a functor satisfying:

\begin{theorem}[Correspondence] \label{correspondence}
Let $\mathbf{gMod}_{R[t]}$ denote the category of graded $R[t]$-modules and graded $R[t]$-morphisms. Then,
$ P: \mathbf{Mod}_R^\NN
\longrightarrow
\mathbf{gMod}_{R[t]}$ is
an equivalence of categories.
\end{theorem}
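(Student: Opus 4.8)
The plan is to exhibit an explicit quasi-inverse functor and then check that the two composites are (naturally isomorphic to) the respective identities; in fact I expect to obtain a strict two-sided inverse, so that $P$ is not merely an equivalence but an isomorphism of categories. The guiding principle is that a graded $R[t]$-module carries exactly the same information as an $\NN$-indexed diagram of $R$-modules: the homogeneous pieces record the $R$-modules sitting at each index, and since $t$ is a single degree-one generator, multiplication by $t$ records precisely the transition maps between consecutive indices.

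Concretely, I would define a functor $Q : \mathbf{gMod}_{R[t]} \longrightarrow \mathbf{Mod}_R^\NN$ as follows. Given a graded $R[t]$-module $V = \bigoplus_{i\in\N} V_i$, let $Q(V)$ be the diagram whose value at $i$ is the $R$-module $V_i$ and whose structure map $V_i \to V_{i+1}$ is multiplication by $t$; this lands in $V_{i+1}$ precisely because $t$ is homogeneous of degree one, and the composite of consecutive maps from index $i$ to index $j$ is multiplication by $t^{j-i}$, so the functoriality relations $h_{i,i}=\mathrm{id}$ and $h_{k,j}\circ h_{j,i}=h_{k,i}$ hold automatically. On morphisms, a graded $R[t]$-morphism $f : V \to W$ preserves degree, hence restricts to $R$-linear maps $f_i : V_i \to W_i$; the $R[t]$-linearity condition $f(t\cdot v)=t\cdot f(v)$ translates into the commuting-square (naturality) condition $f_{i+1}\circ(t\cdot -)=(t\cdot -)\circ f_i$. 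Thus $Q(f):=\{f_i\}_i$ is a morphism in $\mathbf{Mod}_R^\NN$, and one checks that $Q$ preserves identities and composition.

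The remaining step is to verify $Q\circ P=\mathrm{Id}$ and $P\circ Q=\mathrm{Id}$. For an object $\mathcal{M}=\{h_{j,i}\}$, the module $P\mathcal{M}=\bigoplus_i M_i$ has $t$ act by the maps $h_i:=h_{i+1,i}$, so its degree-$i$ piece is $M_i$ and its multiplication-by-$t$ map is $h_i$; applying $Q$ therefore returns $\mathcal{M}$ exactly. Conversely, for a graded module $V$, the diagram $Q(V)$ has components $V_i$ and transition maps $t\cdot(-)$, and $P(Q(V))=\bigoplus_i V_i$ recovers $V$ together with its original $t$-action, since the $R[t]$-module structure of $V$ is determined by the $R$-module structure on each $V_i$ and the degree-raising action of $t$. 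The analogous identities on morphisms follow from the bijection between graded $R[t]$-morphisms and natural transformations described above.

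The part requiring the most care is the translation of $R[t]$-linearity into the naturality condition, and dually the observation that the entire graded-module structure is reconstructed from the diagram data with neither loss nor addition of information. Here one must use that $R[t]$ is generated as an $R$-algebra by the single degree-one element $t$, together with the fact that the grading is indexed by $\N$: this is exactly what forces multiplication by $t^k$ on a homogeneous element of degree $i$ to equal the $k$-fold composite $h_{i+k-1}\circ\cdots\circ h_i$, leaving no residual freedom. Once this is in place, the mutual inverseness of $P$ and $Q$ is immediate, and in particular $P$ is an equivalence of categories.
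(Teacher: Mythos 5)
The paper itself offers no proof of this theorem: it is stated as the standard correspondence going back to Zomorodian and Carlsson \cite{CPH}, with only the preceding remark that $P\phi := \oplus_i \phi_i$ is ``readily checked'' to be a graded morphism. Your proposal supplies precisely the standard argument that the paper omits, and it is correct. The quasi-inverse $Q$ is well defined; the translation of $R[t]$-linearity of a degree-preserving map into the naturality squares over consecutive arrows $i \to i+1$ is the right key step; and your observation that every arrow $i\le j$ of $\NN$ factors through consecutive arrows (so that a diagram in $\mathbf{Mod}_R^\NN$ is determined by, and reconstructible from, the maps $h_{i+1,i}$, and naturality over consecutive arrows implies full naturality) is exactly what makes both composites work. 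The only claim I would temper is that $P$ is a \emph{strict} isomorphism of categories: as literally defined, $P\mathcal{M}$ is an external direct sum whose elements are tuples, while the homogeneous decomposition of a graded module $V$ is internal, so $Q(P(\mathcal{M}))(i)$ is the canonical image of $M_i$ in $\bigoplus_j M_j$ rather than $M_i$ itself, and $P(Q(V))$ is canonically isomorphic to $V$ via $(v_i)_i \mapsto \sum_i v_i$ rather than equal to it. Under the usual identification of internal with external direct sums this distinction evaporates, and in any case both composites are naturally isomorphic to the identities, which is all the theorem asserts: $P$ is an equivalence of categories.
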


\begin{remark}\label{rmk:interval}
 Let $ \ell < \rho $ in $\mathbb{N}\cup \{\infty\} $, and let $\mathds{1}_{[\ell,\rho)} \in  \mathbf{Mod}_R^\NN$ be the resulting interval diagram.
Then,
\[
P\mathds{1}_{[\ell,\rho)} \cong \left(t^\ell \cdot R[t]\right)/\left(t^\rho\right)
\]
as graded $R[t]$-modules, with the convention  $t^\infty = 0$.
These are called \emph{interval modules}.
\end{remark}

Let $\mathcal{X} \in \mathbf{Top}^\NN$---for instance,  encoding
multiscale approximations to the topology
of an underlying unknown space (see Example \ref{expl:MotivationTDA}).
Applying $H_n(\;\cdot\; ; R)$  as in Example \ref{PersMod} yields
an object in $\mathbf{Mod}_R^\NN$,
and applying the functor $P$
from the Correspondence Theorem (\ref{correspondence}) defines a graded $R[t]$-module.
Explicitly:

\begin{definition}
Given an object $\mathcal{ X } \in  \mathbf{Top^N}$,  its
$n$-dimensional \emph{persistent homology}
with coefficients in $R$ is the persistence module
$PH_n(\mathcal{X}; R) \in \mathbf{gMod}_{R[t]}$,
\[
PH_n(\mathcal{X};R) :=
\bigoplus_{i\in \mathbb{N}}
H_n(X_i ;R).
\]
\end{definition}

\begin{remark}
If $\mathbb{F}$ is a field and $H_n(\mathcal{X};\mathbb{F})$ is pointwise finite, then
\[
PH_n(\mathcal{X}; \mathbb{F})
\cong
\bigoplus_{[\ell,\rho) \in \mathsf{bcd}_n(\mathcal{X};\mathbb{F})}
\left(t^\ell \cdot \mathbb{F}[t]\right)
/
\left(t^\rho\right)
\] as graded $\mathbb{F}[t]$-modules.
A graded version of the structure
theorem for modules
over a PID implies that
if $PH_n(\mathcal{X}; \mathbb{F})$ is finitely generated over $\mathbb{F}[t]$, then
$\mathsf{bcd}_n(\mathcal{X};\mathbb{F})$
can be recovered from the (graded) invariant factor decomposition
of $PH_n(\mathcal{X}; \mathbb{F})$.
This is how   the first general persistent homology algorithms were implemented  \cite{CPH}.
\end{remark}

Thus far we have described persistent homology in terms of barcodes and $\F[t]$-graded modules.
The next, and final description, is in terms of the standard homology of the \emph{persistence chain complex}.
Indeed, let $\mathbf{Ch}_R$ denote the category of ($\mathbb{N}$-graded) chain
complexes of $R$-modules, and chain maps.
Recall that given two chain complexes
$C_*$
and $C_*'$,
their direct sum is given by
\[
C_*\oplus C_*'  =
\left\{\partial_i \oplus \partial'_i : C_i \oplus C_i' \longrightarrow
C_{i-1}\oplus C'_{i-1}
\right\}_{i\in \N}.
\]

\begin{definition}
Let $\mathcal{C}_* = \{f_j : C_{*j} \longrightarrow C_{*j+1}\}$
be an object in $\mathbf{Ch}_R^\NN$.
That is, each $C_{*j}$ is a chain complex
of $R$-modules, and the $f_j$'s are  chain maps.
Then, the \emph{persistence chain complex}
of $\mathcal{C}_*$ is
\[
P\mathcal{C}_* : =
\bigoplus_{j\in \mathbb{N}}
C_{*j}
\]
where each
$P\mathcal{C}_i = \bigoplus\limits_{j\in \mathbb{N}} C_{i,j} \in \mathbf{gMod}_R $,
and therefore $P\mathcal{C}_* $ is an object in the category $ \mathbf{gCh}_{R[t]}$ of chain  complexes
of graded $R[t]$-modules.
\end{definition}

\begin{remark}\label{rmk:PHCvsHPC}
Since homology commutes
with direct sums of chain complexes, then
\[
H_n(P\mathcal{C}_*)
\cong
\bigoplus_{j\in \mathbb{N}}
H_n(C_{*j})
=
PH_n(\mathcal{C}_*)
\]
as $R[t]$ modules.
\end{remark}

For  $X\in \mathbf{Top}$, let
$S_*(X;R) \in \mathbf{Ch}_R$
denote the chain complex of singular
chains in $X$ with coefficients in $R$.
Then,
 given $\mathcal{X} \in \mathbf{Top}^\NN $,
 composition of functors
yields an object
$S_*(\mathcal{X};R)$ in $\mathbf{Ch}_R^\NN$.
The associated persistence
chain complex is thus
an object $PS_*(\mathcal{X};R) $ in
$ \mathbf{gMod}_{R[t]}$
and its homology---by Remark \ref{rmk:PHCvsHPC}---recovers the
persistent homology of $\mathcal{X}$.
In other words,

\begin{theorem}[\cite{CPH}] \label{PH} Let $\mathcal{X}\in \mathbf{Top^N}$. Then, its persistent homology is isomorphic over $R[t]$ to the homology
of $PS_*(\mathcal{X},R)$:
	\begin{align}
	PH_n(\mathcal{X}; R) \cong H_n(PS_{\ast}(\mathcal{X} ; R)).
	\end{align}
\end{theorem}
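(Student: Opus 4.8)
The plan is to reduce the statement to Remark~\ref{rmk:PHCvsHPC} by specializing the diagram of chain complexes appearing there to $\mathcal{C}_* = S_*(\mathcal{X}; R)$, and then to unwind the definition of singular homology. The only fact needed beyond that remark is the tautology that the singular homology of a space is the homology of its singular chain complex.

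First I would form $\mathcal{C}_* := S_*(\mathcal{X}; R)$. Since $\mathcal{X} \in \mathbf{Top}^\NN$ is a functor $\NN \to \mathbf{Top}$, post-composing with the singular chains functor $S_*(\,\cdot\,; R): \mathbf{Top} \to \mathbf{Ch}_R$ produces an object $\mathcal{C}_* \in \mathbf{Ch}_R^\NN$ whose term at index $j$ is $S_*(X_j; R)$ and whose structure map at $j$ is the chain map induced by $f_j: X_j \to X_{j+1}$. By the definition of the persistence chain complex, $P\mathcal{C}_* = PS_*(\mathcal{X}; R)$.

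Next I would apply Remark~\ref{rmk:PHCvsHPC} verbatim to this $\mathcal{C}_*$, obtaining an isomorphism of $R[t]$-modules
\[
H_n\big(PS_*(\mathcal{X}; R)\big) \;=\; H_n(P\mathcal{C}_*) \;\cong\; \bigoplus_{j\in\N} H_n\big(S_*(X_j; R)\big).
\]
Since $H_n(S_*(X_j; R))$ is by definition the singular homology $H_n(X_j; R)$, the right-hand side equals $\bigoplus_{j\in\N} H_n(X_j; R) = PH_n(\mathcal{X}; R)$, which is the desired conclusion.

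The one point deserving care---and the only place the argument is more than bookkeeping---is that the isomorphism must respect the graded $R[t]$-module structure and not merely the underlying $R$-module structure. On the homology side the variable $t$ shifts the grading index $j$ and applies the induced map $H_n(f_j)$; at the chain level $t$ shifts $j$ and applies the induced chain map. That these two actions agree under the natural isomorphism \emph{homology commutes with direct sums} is exactly functoriality of $H_n$, namely that the map induced on homology by the chain map induced by $f_j$ is $H_n(f_j)$. Because the direct-sum decomposition indexed by $j$, the differential, and the multiplication by $t$ all respect this internal grading, the natural isomorphism is graded and $R[t]$-linear. This is precisely what Remark~\ref{rmk:PHCvsHPC} already records (the $R[t]$-structure on both sides being furnished by the functor $P$ of the Correspondence Theorem~\ref{correspondence}), so no further verification is required and the theorem is then immediate.
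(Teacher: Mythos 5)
Your proposal is correct and takes essentially the same route as the paper: the paper obtains Theorem \ref{PH} precisely by composing the singular chains functor with $\mathcal{X}$ to get $S_*(\mathcal{X};R) \in \mathbf{Ch}_R^{\NN}$, forming $PS_*(\mathcal{X};R)$, and invoking Remark \ref{rmk:PHCvsHPC} (homology commutes with direct sums), which is exactly your argument. Your additional paragraph checking that the isomorphism respects the graded $R[t]$-structure via functoriality of $H_n$ is a sound elaboration of a point the paper leaves implicit in that remark.
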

For a general (small) indexing category $\mathbf{I}$
the barcode is no longer available as a discrete  invariant for pointwise finite objects in $\mathbf{Mod}_\mathbb{F}^\mathbf{I}$   \cite{multiparameter}.
However, it is always possible to consider the rank invariant:

\begin{definition}\label{def:RankInvariant}
Let $\mathcal{ M } \in \mathbf{Mod}_\mathbb{F}^\mathbf{I}$ . The \textit{rank invariant} of $\mathcal{M}$ is the
function on  morphisms of
$\II$  given by $\rho^\mathcal{ M }(i \rightarrow j) = \mathsf{rank}(\mathcal{M}(i\rightarrow j)) \in \N\cup \{\infty\} $.
For an object $\mathcal{ X }\in \mathbf{Top}^\II$
we let $\rho_n^\mathcal{ X } := \rho^{H_n(\mathcal{X};\F)}$.
\end{definition}

\begin{remark}\label{rmk:rankandbarcode}
Note that $\rho^\mathcal{ M }$ is an invariant of the isomorphism type of $\mathcal{ M }$.
It is in fact computable in polynomial time when $\mathbf{I} = \mathbf{N}^k$
and $P\mathcal{M}$ is finitely generated as an
$\F[t_1,\ldots, t_k]$ module \cite{compmultdim}.
If   $\mathcal{ M } \in \mathbf{Mod}_\F^\RR$ is pointwise finite,
then   $\rho^\mathcal{ M }$ and   $\mathsf{bcd}(\mathcal{ M })$ can be recovered from each other \cite[Theorem 12]{multiparameter}.
\end{remark}

\subsection{The Classical K\"{u}nneth Theorems}
The classical K\"{u}nneth theorem in algebraic topology relates the homology of the product of two spaces to the homology of its factors.
The relation is  via a split natural short exact sequence,
and the proof is a combination of the algebraic K\"unneth formula for chain complexes, and the Eilenberg-Zilber theorem.
Both theorems are stated next, but first here are two
relevant definitions:

\begin{definition}
	Let $R$ be a commutative ring, and let $C, C' \in \mathbf{Ch}_R$. The \textit{tensor product chain complex} $C \otimes_R C'$ consists of $R$-modules $(C \otimes_R C')_n=\bigoplus\limits_i(C_i \otimes_R C'_{n-i})$ and boundary morphisms
\[
\begin{array}{ccl}
C_i \otimes_R C_{n-i}' & \longrightarrow & (C\otimes_R C')_n \\
c \otimes_R c' &\mapsto& \partial_i c \otimes_R c'+ (-1)^i c \otimes_R \partial'_{n-i} c'
\end{array}
\]
\end{definition}
\begin{definition}
	An $R$-module $M$ is called \textit{flat}, if for every short exact sequence $0 \rightarrow A \rightarrow A' \rightarrow A'' \rightarrow 0$ of $R$-modules, the induced  sequence
\[0 \rightarrow A\otimes_R M \rightarrow A' \otimes_R M \rightarrow A'' \otimes_R M \rightarrow 0\] is also exact.
In particular, free modules are flat.
\end{definition}

\begin{theorem}[The Algebraic K\"{u}nneth Formula]\label{AKT}
	If $R$ is a PID and at least one of the chain complexes $C,C'$ is flat
(i.e., the constituent modules are flat), then for each $n\in \N$ there is a natural short exact sequence
\begin{align*}
0
\longrightarrow
\bigoplus_{i + j = n}(H_i(C) \otimes_R H_{j}(C'))
\longrightarrow
H_n(C \otimes_R C') \longrightarrow \hspace{1cm}\\
\bigoplus_{i+j} \mathsf{Tor}_R(H_i(C),H_{j-1}(C'))
\longrightarrow
0
	\end{align*}
	which splits but not naturally.
\end{theorem}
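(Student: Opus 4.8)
The plan is to run the classical cycles-and-boundaries argument, exploiting that over a PID ``flat'' coincides with ``torsion-free,'' so that every submodule in sight is again flat. By symmetry we may assume without loss of generality that $C$ is the flat complex. Write $Z_i = \ker \partial_i$ and $B_i = \im \partial_{i+1}$ for its cycles and boundaries; since each $C_i$ is torsion-free and submodules of torsion-free modules over a PID are torsion-free (hence flat), every $Z_i$ and $B_i$ is flat. Regard $Z$ and $B$ as chain complexes with \emph{zero} differentials, and let $B[-1]$ be the complex with $(B[-1])_i = B_{i-1}$. The boundary maps assemble into a short exact sequence of complexes
\[
0 \longrightarrow Z \longrightarrow C \stackrel{\partial}{\longrightarrow} B[-1] \longrightarrow 0,
\]
and because $B[-1]$ is termwise flat, tensoring over $R$ with $C'$ preserves exactness, yielding
\[
0 \longrightarrow Z \otimes_R C' \longrightarrow C \otimes_R C' \longrightarrow B[-1] \otimes_R C' \longrightarrow 0
\]
as a short exact sequence of chain complexes.

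Next I would pass to the associated long exact sequence in homology. Since $Z$ and $B$ carry zero differentials and each $Z_i$ is flat (so $Z_i \otimes_R -$ is exact and commutes with homology),
\[
H_n(Z \otimes_R C') \cong \bigoplus_i Z_i \otimes_R H_{n-i}(C'), \qquad H_n(B[-1]\otimes_R C') \cong \bigoplus_i B_{i-1}\otimes_R H_{n-i}(C').
\]
A snake-lemma computation identifies the connecting homomorphism $\delta$ with the map induced, summand by summand, by the inclusion $\iota \colon B_{i-1}\hookrightarrow Z_{i-1}$ tensored with the identity on $H_{n-i}(C')$. Breaking the long exact sequence into short ones gives, for each $n$,
\[
0 \longrightarrow \operatorname{coker}(\delta_{n+1}) \longrightarrow H_n(C\otimes_R C') \longrightarrow \ker(\delta_n) \longrightarrow 0.
\]

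To identify the two outer terms I would resolve the homology of $C$. For each $i$ the defining sequence $0 \to B_i \to Z_i \to H_i(C)\to 0$ is a length-one flat resolution of $H_i(C)$, so tensoring with $H_j(C')$ and using $\mathsf{Tor}_R(Z_i, -)=0$ produces the exact sequence
\[
0 \longrightarrow \mathsf{Tor}_R\big(H_i(C),H_j(C')\big) \longrightarrow B_i\otimes_R H_j(C') \xrightarrow{\ \iota\otimes 1\ } Z_i\otimes_R H_j(C') \longrightarrow H_i(C)\otimes_R H_j(C')\longrightarrow 0.
\]
Reading off kernels and cokernels of $\delta$ and reindexing then identifies $\operatorname{coker}(\delta_{n+1}) \cong \bigoplus_{i+j=n}H_i(C)\otimes_R H_j(C')$ and $\ker(\delta_n)\cong \bigoplus_{i+j=n}\mathsf{Tor}_R(H_i(C),H_{j-1}(C'))$, which is exactly the desired sequence. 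Naturality is automatic: every step---the cycle/boundary sequence, the long exact sequence, and the $\mathsf{Tor}$ sequence---is functorial in chain maps of $C$ and $C'$.

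The main obstacle is the \emph{splitting}, together with the reason it is not natural. The clean way to produce it is to build chain maps $q\colon C \to H_*(C)$ and $q'\colon C'\to H_*(C')$ (into complexes with zero differential) inducing the identity on homology; then $q\otimes q'$ descends to a retraction $H_n(C\otimes_R C')\to \bigoplus_{i+j=n}H_i(C)\otimes_R H_j(C')$ splitting the inclusion. Such a $q$ exists precisely when the sequences $0\to Z_i\to C_i\to B_{i-1}\to 0$ split, i.e., when each $B_{i-1}$ is projective; this holds when $C$ is free (a submodule of a free module over a PID is free), and the general flat case reduces to it by replacing $C$ with a quasi-isomorphic free complex, which does not change $H_*(C\otimes_R C')$ since both complexes are flat. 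The retraction depends on the non-canonical choice of these splittings, which is exactly why the splitting of the K\"unneth sequence cannot be made natural.
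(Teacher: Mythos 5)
The paper itself offers no proof of this theorem; it simply cites Hilton--Stammbach (Chapter V, Theorem 2.1), and your exactness argument is precisely the classical proof found in such references. That part of your proposal is correct: over a PID flat equals torsion-free, so the cycles $Z_i$ and boundaries $B_i$ of the flat complex are again flat; the sequence $0 \to Z \to C \to B[-1] \to 0$ of complexes stays exact after tensoring with $C'$ because its termwise cokernels are flat; the connecting homomorphism of the resulting long exact sequence is $\iota \otimes 1$; and the resolution $0 \to B_i \to Z_i \to H_i(C) \to 0$ identifies $\mathsf{coker}(\delta_{n+1})$ and $\mathsf{ker}(\delta_n)$ with the tensor and $\mathsf{Tor}$ terms, with the correct degree shift. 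Naturality is indeed automatic from the functoriality of each step.

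The genuine gap is in the splitting. Your retraction is $(q \otimes q')_*$, so you need \emph{both} $q\colon C \to H_*(C)$ and $q'\colon C' \to H_*(C')$, but your justification (split the cycle sequences; this holds when the complex is free; reduce the flat case to the free case) is carried out only for $C$. It cannot be transported verbatim to $C'$: the hypothesis is one-sided, $C'$ is an arbitrary complex, and for a general $C'$ no chain map $q'$ inducing the identity on homology exists at all. Concretely, over $R = \mathbb{Z}$ let $C'$ be the complex $\mathbb{Z}/4 \xrightarrow{\;2\;} \mathbb{Z}/4$ concentrated in degrees $1$ and $0$. Then $Z'_1 = \{0,2\} \cong \mathbb{Z}/2 = H_1(C')$, yet every homomorphism $\mathbb{Z}/4 \to \mathbb{Z}/2$ kills the element $2$, so no $q'_1$ restricting to the identity on $Z'_1$ exists. (Incidentally, your ``precisely when'' is also too strong---existence of $q$ does not force the cycle sequences to split---but you only use the harmless direction.) The repair is to replace $C'$ by a quasi-isomorphic free complex $F'$ as well; note, however, that the justification must differ from the one you gave for $C$: the cone of $F' \to C'$ is acyclic but \emph{not} flat, so the phrase ``since both complexes are flat'' does not apply. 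What one uses instead is that tensoring with a bounded-below complex of flat modules (namely $C$, or your free replacement $F$) preserves quasi-isomorphisms. With both replacements in place, naturality of the sequence plus the five lemma transports the splitting of the $(F,F')$ sequence to the $(C,C')$ sequence, the splitting still depends on non-canonical choices, and the argument closes.
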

See for instance
 \cite[Chapter 5, Theorem 2.1]{hilton}.
As it is well known, when $C$ and $C'$ are the singular chain complexes of two topological spaces $X$ and $Y$, then
the Eilenberg-Zilber theorem  \cite{EZ}  provides a
link between the algebraic K\"unneth theorem and the homology of $X\times Y$:

\begin{theorem}[Eilenberg-Zilber]\label{thm:ez}
	For topological spaces $X$ and $Y$, there is a natural chain equivalence $\zeta: S_{\ast}(X;R) \otimes_R S_{\ast}(Y;R) \longrightarrow S_{\ast}(X \times Y;R)  $,
and thus
\[
H_n(X \times Y;R)
\cong
H_n\big(S_{\ast}(X;R) \otimes_R S_{\ast}(Y;R)\big)
\]
	for all $n\geq 0$.
\end{theorem}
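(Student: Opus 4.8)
The plan is to prove this via the method of acyclic models, which delivers both the naturality and the chain equivalence simultaneously. I would regard $F(X,Y) := S_*(X;R)\otimes_R S_*(Y;R)$ and $G(X,Y) := S_*(X\times Y;R)$ as functors from $\mathbf{Top}\times\mathbf{Top}$ to $\mathbf{Ch}_R$, and take as models the collection $\mathcal{M} = \{(\Delta^p,\Delta^q) : p,q\geq 0\}$ of pairs of standard simplices. The normalization at the base is the evident natural isomorphism in degree zero: $S_0(X)\otimes_R S_0(Y)$ is free on pairs of points $(x,y)$ and $S_0(X\times Y)$ is free on points of $X\times Y$, and $x\otimes_R y \mapsto (x,y)$ identifies $H_0 F \cong H_0 G$ naturally.

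First I would verify that both functors are \emph{free with respect to $\mathcal{M}$}. For $F$, in degree $n$ the module $\bigoplus_{i+j=n} S_i(X)\otimes_R S_j(Y)$ is freely generated by the elements $(\sigma_*\otimes_R\tau_*)(\iota_i\otimes_R\iota_j)$, where $\iota_i\in S_i(\Delta^i)$ and $\iota_j\in S_j(\Delta^j)$ are the identity simplices and $\sigma,\tau$ range over singular simplices; thus $\iota_i\otimes_R\iota_j\in F_n(\Delta^i,\Delta^j)$ is the required basis element. For $G$, a singular $n$-simplex in $X\times Y$ is a pair $(\alpha,\beta)$ with $\alpha\colon\Delta^n\to X$ and $\beta\colon\Delta^n\to Y$, i.e. the image of the diagonal $\delta_n\colon\Delta^n\to\Delta^n\times\Delta^n$ under $(\alpha\times\beta)_*$, so $\delta_n\in G_n(\Delta^n,\Delta^n)$ serves as the basis element.

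Next I would check \emph{acyclicity on models}: that $H_n\big(F(\Delta^p,\Delta^q)\big)$ and $H_n\big(G(\Delta^p,\Delta^q)\big)$ vanish for $n>0$ and equal $R$ for $n=0$. Since $\Delta^p\times\Delta^q$ is convex, hence contractible, $G(\Delta^p,\Delta^q)=S_*(\Delta^p\times\Delta^q)$ has the homology of a point. For $F$, the contractibility of $\Delta^p$ and $\Delta^q$ gives chain homotopy equivalences $S_*(\Delta^p)\simeq R$ and $S_*(\Delta^q)\simeq R$ concentrated in degree zero; tensoring these equivalences (the singular chains are free, hence flat, $R$-modules) yields $F(\Delta^p,\Delta^q)\simeq R\otimes_R R\cong R$, which is acyclic. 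With freeness and acyclicity in hand, the fundamental lemma of acyclic models produces natural chain maps $\zeta\colon F\to G$ and $\eta\colon G\to F$ extending the degree-zero isomorphism, each unique up to natural chain homotopy.

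Finally I would assemble the equivalence. The composites $\eta\circ\zeta$ and $\zeta\circ\eta$ are natural chain maps inducing the identity on $H_0$, as are $\mathrm{id}_F$ and $\mathrm{id}_G$; by the uniqueness clause they are naturally chain homotopic to the respective identities. Hence $\zeta$ is a natural chain equivalence, and since chain equivalences induce isomorphisms on homology, $H_n(X\times Y;R)\cong H_n\big(S_*(X;R)\otimes_R S_*(Y;R)\big)$ for all $n\geq 0$. I expect the main obstacle to be the acyclicity of $F$ on models: unlike the geometric contractibility that trivializes $G$, establishing that $H_n\big(F(\Delta^p,\Delta^q)\big)=0$ for $n>0$ requires promoting the acyclicity of singular chains on a contractible space to their tensor product, which is essentially a K\"unneth computation on the models and the point where flatness of the singular chains over the PID $R$ enters.
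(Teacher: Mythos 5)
Your proof is correct and follows exactly the route the paper indicates for this classical theorem: an application of the acyclic models theorem of Eilenberg--MacLane, with models $(\Delta^p,\Delta^q)$, freeness witnessed by the elements $\iota_i\otimes_R\iota_j$ and the diagonal simplices, acyclicity on models from contractibility, and the uniqueness clause upgrading the two natural maps to a natural chain equivalence. This is also precisely the template the paper itself replays in the persistent setting (Lemmas \ref{FandEacyclic}, \ref{FandEfree}, and \ref{HFEequivalence} feeding into Lemma \ref{EZnew}), so your argument matches the intended proof in both method and structure.
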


The existence of $\zeta$ is in fact an application
of the acyclic models theorem of Eilenberg and MacLane \cite{acyclic}.
This theorem provides conditions under which two functors
to the category of chain complexes produce naturally
isomorphic homology theories.
Later on we will use   the machinery of acyclic models  to prove a persistent
Eilenberg-Zilber theorem (Theorem \ref{EZnew}), which then yields our persistent
K\"unneth formula for the tensor product (Theorem \ref{KunnethTPF}).
For now, here is the topological K\"unneth theorem:

\begin{corollary}[The topological  K\"{u}nneth formula]\label{classickunneth}
	Let $X,Y$ be topological spaces and let $R$ be a PID.
Then, there exists a natural short exact sequence
	\begin{align}\label{kunneth}
	\begin{split}
	0 \longrightarrow \bigoplus_{i+j = n}(H_i(X;R) \otimes_R H_{j}(Y;R)) \longrightarrow H_n(X \times Y;R) \longrightarrow\\ \bigoplus_{i+j = n} \mathsf{Tor}_R(H_i(X;R),H_{j-1}(Y;R)) \longrightarrow 0
	\end{split}
	\end{align} which splits, though not naturally.
\end{corollary}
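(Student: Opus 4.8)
The plan is to combine the Algebraic K\"unneth Formula (Theorem \ref{AKT}) with the Eilenberg-Zilber theorem (Theorem \ref{thm:ez}), specializing the former to the singular chain complexes of $X$ and $Y$. First I would set $C = S_*(X;R)$ and $C' = S_*(Y;R)$. The crucial observation is that these are chain complexes of \emph{free} $R$-modules: for each $n$, $S_n(X;R)$ is the free $R$-module on the set of singular $n$-simplices of $X$. Since free modules are flat, both $C$ and $C'$ are flat complexes, so in particular the flatness hypothesis of Theorem \ref{AKT} is satisfied (it suffices that one of them be flat).

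With this in hand, Theorem \ref{AKT} produces, for each $n$, a natural short exact sequence
\[
0 \to \bigoplus_{i+j=n} H_i(C) \otimes_R H_j(C') \to H_n(C \otimes_R C') \to \bigoplus_{i+j=n} \mathsf{Tor}_R\big(H_i(C), H_{j-1}(C')\big) \to 0,
\]
which splits but not naturally. Next I would invoke the Eilenberg-Zilber theorem (Theorem \ref{thm:ez}): the natural chain equivalence $\zeta \colon S_*(X;R)\otimes_R S_*(Y;R) \to S_*(X\times Y;R)$ induces a natural isomorphism $H_n(C\otimes_R C') \cong H_n(X\times Y;R)$ for all $n$. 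Substituting $H_i(C) = H_i(X;R)$ and $H_j(C') = H_j(Y;R)$, and replacing the middle term $H_n(C\otimes_R C')$ by $H_n(X\times Y;R)$ through this isomorphism, yields exactly the sequence (\ref{kunneth}).

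The only point requiring care is the \emph{naturality} of the resulting sequence in $X$ and $Y$, and this is the step I expect to be the main (if mild) obstacle. The outer terms are manifestly natural, being built from the homology functors $H_*(-;R)$ together with the bifunctors $\otimes_R$ and $\mathsf{Tor}_R$. For the middle, the naturality of the algebraic K\"unneth sequence in its chain-complex arguments $C, C'$ must be composed with the naturality of $\zeta$: a product map $f\times g$ induces $S_*(f)\otimes_R S_*(g)$ on the tensor complex, and the naturality clause of Theorem \ref{thm:ez} guarantees that $\zeta$ intertwines this with $S_*(f\times g)$. Hence the relevant square commutes and the middle isomorphism is natural, so the whole of (\ref{kunneth}) is natural. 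The splitting is inherited directly from the non-natural splitting supplied by Theorem \ref{AKT}, which explains why (\ref{kunneth}) splits but not naturally. Everything else is a direct specialization, so no deep calculation is involved beyond this bookkeeping.
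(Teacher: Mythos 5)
Your proposal is correct and is exactly the argument the paper intends: the paper presents this corollary as an immediate consequence of Theorem \ref{AKT} applied to the (free, hence flat) singular chain complexes $S_*(X;R)$ and $S_*(Y;R)$, with the Eilenberg-Zilber equivalence of Theorem \ref{thm:ez} identifying $H_n\big(S_*(X;R)\otimes_R S_*(Y;R)\big)$ with $H_n(X\times Y;R)$ naturally. Your extra care about naturality of the middle isomorphism and the non-natural splitting is the right bookkeeping and matches the paper's implicit reasoning.
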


\subsection{Homotopy Colimits}
The machinery of homotopy colimits will be used in Section \ref{filtrations}
to define the generalized tensor product $\otimes_\mathbf{g} : \mathbf{Top}^\NN \times \mathbf{Top}^\NN \longrightarrow
\mathbf{Top}^\NN$ appearing in  Theorem \ref{KunnethTPF}.
We provide here a basic review, but  direct the interested reader to \cite{hoco} or \cite{hocomb} for a more comprehensive presentation.
If $\mathbf{C}$ is a category, then  $\mathbf{C}^{op}$ will denote its opposite category, and $\mathcal{B}\mathbf{C}$ will denote its classifying space---i.e., the geometric realization of the nerve of $\mathbf{C}$.

\begin{definition}
The \textit{undercategory}   of $c \in \mathbf{C}$, denoted  $(c \downarrow \mathbf{C})$, is the category whose objects are pairs $(c', \sigma)$ consisting of an object $c'\in \mathbf{C}$ and a morphism $\sigma: c \rightarrow c'$ in $\mathbf{C}$. A morphism from $ (c', \sigma: c \rightarrow c')$ to $(c'', \beta: c \rightarrow c'')$ is a morphism  $\tau: c' \rightarrow c''$ in $\mathbf{C}$ such that $\tau \circ \sigma = \beta$.
\end{definition}

\begin{remark}
If $\sigma: c \rightarrow c'$ is a morphism in $\mathbf{C}$, then composing with $\sigma$ induces covariant
 functors $\sigma^* : (c' \downarrow \mathbf{C}) \rightarrow (c \downarrow \mathbf{C})$ and
 $\sigma_{op}^* : (c' \downarrow \mathbf{C})^{op} \rightarrow (c \downarrow \mathbf{C})^{op}$.
\end{remark}

\begin{definition}\label{prodcoprod}
	Let $\lbrace A_i\rbrace_{i\in I}$ be a family of objects in a category $\mathbf{C}$. The \textit{product} of this family, if it exists,
is   an object $P\in \mathbf{C}$ along with morphisms  $p_i: P \rightarrow A_i$ such that for any  $Y\in \mathbf{C}$ and any collection of morphisms $\lbrace y_i : Y \rightarrow A_i\rbrace_{i\in I}$, there exists a unique morphism $f: Y \rightarrow P$ so that $ p_i \circ f = y_i$ for each $i$.
The product, since when it exists it is unique up to isomorphism, will be denoted  $\bigtimes_{i \in A}A_i$.
The existence of $f$ for any $Y$ and any $\{y_i\}_{i\in I}$, is referred to as \emph{the universal property}
defining the categorical product.

The dual notion of \textit{coproduct} of $\lbrace A_i\rbrace_{i\in I}$ is the object $C\in\mathbf{C}$, when it exists,
along with morphisms $c_i: A_i \rightarrow C$ such that for any any object $Y$ and any collection of morphisms $\lbrace y_i : A_i \rightarrow Y\rbrace_{i\in I}$, there exists a unique morphism $g: C \rightarrow Y$ such that $g \circ c_i = y_i$ for each $i$.
The coproduct of $\{A_i\}_{i\in I}$ is denoted $\coprod_{i\in I}A_i$,
and the existence of $g$ is the universal property
defining it.
\end{definition}

\begin{example}
If  $A_i \in \textbf{Top}$, then the product $\bigtimes_{i \in A}A_i$ is the space whose underlying set is the Cartesian product endowed with the product topology, while the coproduct $\coprod_{i\in I}A_i$ is their disjoint union.
\end{example}

\begin{definition}
Let $f,g: A  \rightarrow B$ be  morphisms in a category $\mathbf{C}$.
The \emph{coequalizer} of $f$ and $g$, denoted  $\mathsf{coeq}(A  \substack{ \rightarrow \\[-0.8em] \rightarrow} B)$ if it exists, is an object $C\in \CC$ with a morphism $q: B \rightarrow C$ such that $q \circ f = q \circ g$. Moreover, if there is another pair $(C',q')$ such that $q' \circ f = q' \circ g$, then there is a unique $ u : C \rightarrow C'$ such that $u \circ q = q'$.	
\end{definition}
\begin{example}
	If $f,g: A  \rightarrow B$ are two morphisms in $\mathbf{Top}$, then $\mathsf{coeq}(A  \substack{ \rightarrow \\[-0.8em] \rightarrow} B)$ is the quotient space $B\big/ \big(f(a) \sim g(a), \;\forall a \in A\big)$.
\end{example}
\begin{definition}\label{def:coandhoco}
	Let $D: \mathbf{I} \longrightarrow \textbf{Top}$ be a functor. The \textit{colimit} of $D$ is defined as:
	\begin{align}
	\mathsf{colim}(D) := \mathsf{coeq}\left[ \coprod_{i \rightarrow j} D(i)  \substack{ \rightarrow \\[-0.8em] \rightarrow} \coprod_i D(i)  \right]
	\end{align}
	where the top map sends $D(i)$ to itself via the identity,  the bottom  map sends $D(i)$ to $D(j)$ via    $D(i \rightarrow j)$,
and the first coproduct is indexed over all morphisms in $\mathbf{I}$.
	Similarly, the \textit{homotopy colimit} of $D$ is defined as:
	\begin{align}
	\mathsf{hocolim}(D) := \mathsf{coeq}\left[ \coprod_{i \rightarrow j} D(i) \times \mathcal{ B }(j \downarrow \mathbf{I})^{op} \substack{ \rightarrow \\[-0.8em] \rightarrow} \coprod_i D(i) \times \mathcal{ B }(i \downarrow \mathbf{I})^{op} \right]
	\end{align}
	The top map   $D(i) \times \mathcal{ B }(j \downarrow \mathbf{I})^{op} \rightarrow D(j) \times \mathcal{ B }(j \downarrow \mathbf{I})^{op}$
is $D(i \rightarrow j)$ times the identity,
while the bottom map
 $D(i) \times \mathcal{ B }(j \downarrow \mathbf{I})^{op}
\rightarrow D(i) \times \mathcal{ B }(i \downarrow \mathbf{I})^{op}$
is the identity of $D(i)$ times the map on classifying spaces   induced by
$(i\rightarrow j)^*_{op} : (j\downarrow \mathbf{I})^{op} \rightarrow  (i \downarrow \mathbf{I})^{op}$.
\end{definition}

It is worth noting that if $D,D' \in  \mathbf{Top}^\II$, then any morphism  $D\rightarrow D'$
induces (functorially) a  map
$
\mathsf{hocolim}(D) \rightarrow \mathsf{hocolim}(D')
$,
and that  if
 $f  \in  \JJ^\II$ and $D\in \mathbf{Top}^\JJ$,
then $f$ induces a natural map $\phi_f : \mathsf{hocolim}(D\circ f) \rightarrow \mathsf{hocolim}(D)$
called the \emph{change of indexing category}. Moreover,  \cite{hoco}

\begin{prop}\label{prop:hocolimitcommute}
Let   $f: \mathbf{I} \rightarrow \mathbf{J}$,
$g: \mathbf{J} \rightarrow \mathbf{K}$ and $D: \mathbf{K} \rightarrow \mathbf{Top}$ be functors. Then
$\phi_{g\circ f} = \phi_g \circ \phi_f$.
\end{prop}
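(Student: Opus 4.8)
The plan is to exploit that $\mathsf{hocolim}$ is, by Definition \ref{def:coandhoco}, a coequalizer and hence a quotient of the coproduct $\coprod_i D'(i)\times\mathcal{B}(i\downarrow\mathbf{I})^{op}$; consequently the quotient map out of this coproduct is an epimorphism in $\mathbf{Top}$, so any map out of a homotopy colimit is uniquely determined by its composite with that quotient map. Since $\phi_{g\circ f}$ and $\phi_g\circ\phi_f$ share the domain $\mathsf{hocolim}(D\circ g\circ f)$ and the codomain $\mathsf{hocolim}(D)$, it suffices to check that they agree after precomposition with the quotient map $\coprod_i (D\circ g\circ f)(i)\times\mathcal{B}(i\downarrow\mathbf{I})^{op}\to\mathsf{hocolim}(D\circ g\circ f)$; that is, to verify the identity summand-by-summand at the level of coproducts, where no coequalizer relations intervene.

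First I would make the change-of-indexing map explicit. For a functor $f\colon\mathbf{I}\to\mathbf{J}$ and each $i\in\mathbf{I}$, post-composition with $f$ yields a functor on undercategories
\[
f_i\colon (i\downarrow\mathbf{I})\longrightarrow (f(i)\downarrow\mathbf{J}),\qquad (i',\,\sigma\colon i\to i')\longmapsto (f(i'),\,f(\sigma)\colon f(i)\to f(i')),
\]
sending a morphism $\tau$ to $f(\tau)$. Passing to opposite categories and classifying spaces produces $\mathcal{B}(f_i^{op})\colon \mathcal{B}(i\downarrow\mathbf{I})^{op}\to\mathcal{B}(f(i)\downarrow\mathbf{J})^{op}$, and $\phi_f$ is the map induced on coequalizers by the coproduct map whose $i$-th component is
\[
\mathrm{id}\times\mathcal{B}(f_i^{op})\colon (D\circ g\circ f)(i)\times\mathcal{B}(i\downarrow\mathbf{I})^{op}\longrightarrow (D\circ g)(f(i))\times\mathcal{B}(f(i)\downarrow\mathbf{J})^{op},
\]
landing in the $f(i)$-th summand; the space factors agree because $(D\circ g\circ f)(i)=(D\circ g)(f(i))$. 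The analogous descriptions hold for $\phi_g$ and $\phi_{g\circ f}$.

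The key step is the strict functoriality of $i\mapsto f_i$. Tracing an object gives
\[
g_{f(i)}\circ f_i\colon (i',\sigma)\longmapsto (f(i'),f(\sigma))\longmapsto \big(g(f(i')),\,g(f(\sigma))\big)=(g\circ f)_i,
\]
and the same holds on morphisms, so $g_{f(i)}\circ f_i=(g\circ f)_i$ as functors $(i\downarrow\mathbf{I})\to(g(f(i))\downarrow\mathbf{K})$. Applying the functorial passage to opposites and then $\mathcal{B}$ yields $\mathcal{B}(g_{f(i)}^{op})\circ\mathcal{B}(f_i^{op})=\mathcal{B}((g\circ f)_i^{op})$. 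Hence on the $i$-th coproduct summand, $\phi_g\circ\phi_f$ is induced by $\mathrm{id}\times\big(\mathcal{B}(g_{f(i)}^{op})\circ\mathcal{B}(f_i^{op})\big)$ into the $g(f(i))$-th summand, which coincides with the $i$-th component $\mathrm{id}\times\mathcal{B}((g\circ f)_i^{op})$ of the coproduct map defining $\phi_{g\circ f}$. As this holds for every $i$, the two coproduct maps agree, and by the reduction of the first paragraph we conclude $\phi_{g\circ f}=\phi_g\circ\phi_f$.

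I expect the main obstacle to be bookkeeping rather than conceptual: one must keep the three homotopy colimits and their shifting index sets ($i\in\mathbf{I}$, $f(i)\in\mathbf{J}$, $g(f(i))\in\mathbf{K}$) straight, and confirm that each $\phi$ genuinely descends from the coproduct to the coequalizer (i.e., is compatible with both structure maps of Definition \ref{def:coandhoco}) so that the summand-wise comparison is legitimate. Once the functors $f_i$ are identified, the essential content reduces to the equality $g_{f(i)}\circ f_i=(g\circ f)_i$ together with the functoriality of $(-)^{op}$ and $\mathcal{B}$.
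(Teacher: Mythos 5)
Your proof is correct, but note that the paper itself offers no proof of this proposition at all: it is stated as a fact imported from the literature, with the citation \cite{hoco} standing in for the argument. So your write-up is not an alternative route to the paper's proof so much as a filling-in of details the authors chose to outsource. The substance of what you do is the standard argument: since the coequalizer map $q\colon \coprod_i (D\circ g\circ f)(i)\times\mathcal{B}(i\downarrow\mathbf{I})^{op}\to \mathsf{hocolim}(D\circ g\circ f)$ is an epimorphism, equality of $\phi_{g\circ f}$ and $\phi_g\circ\phi_f$ can be tested after precomposition with $q$, where it reduces summand-by-summand to the strict identity $g_{f(i)}\circ f_i=(g\circ f)_i$ of undercategory functors together with functoriality of $(-)^{op}$ and $\mathcal{B}$; that identity is verified correctly. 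Two small points of care. First, the paper never spells out the coproduct-level formula for $\phi_f$ (it only asserts that $f$ "induces a natural map"), so your argument tacitly fixes the standard description, $\mathrm{id}\times\mathcal{B}(f_i^{op})$ into the $f(i)$-th summand; this is certainly the intended definition (it is the one in \cite{hoco}), but it is an input you supplied rather than derived. Second, you correctly flag that each $\phi$ must descend through the coequalizer; for completeness this amounts to the identity $f(\alpha)^*\circ f_{i'}=f_i\circ\alpha^*$ for each $\alpha\colon i\to i'$ in $\mathbf{I}$, which follows by the same object-and-morphism chase as your key step, though in your text it is only asserted as something to "confirm." Neither point is a gap in substance; the proof stands.
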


While  both the colimit and the homotopy colimit of a diagram of spaces  yield
methods for functorially gluing spaces along maps, only the latter  preservers  homotopy equivalences.
Indeed  \cite[Proposition 3.7]{hocomb},
\begin{theorem}\label{hocohom}
	Let $D,D'\in \mathbf{Top}^\II$.
If $D \rightarrow D'$ is a natural (weak) homotopy equivalence---i.e. each $D(i) \rightarrow D'(i)$ is a (weak) homotopy equivalence---then the induced map $\mathsf{hocolim}(D) \rightarrow \mathsf{hocolim}(D')$ is a (weak) homotopy equivalence.
\end{theorem}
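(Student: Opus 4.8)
The plan is to recognize the coequalizer in Definition~\ref{def:coandhoco} as the geometric realization $|B_\bullet(D)|$ of the two-sided simplicial bar construction whose space of $n$-simplices is
\[
B_n(D) \;=\; \coprod_{i_0 \rightarrow i_1 \rightarrow \cdots \rightarrow i_n} D(i_0),
\]
the coproduct ranging over all chains of $n$ composable morphisms in $\II$, with face maps given by composing adjacent morphisms (and applying $D$ to the first one) and degeneracies given by inserting identity morphisms. The first step is to check that this realization agrees with the given coequalizer: the factor $\mathcal{B}(i \downarrow \II)^{op}$ is the realization of the nerve of chains in $\II$ emanating from $i$, so taking the coproduct over $i$ and passing to the coequalizer reproduces exactly $|B_\bullet(D)|$.

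Next, a natural transformation $\eta : D \rightarrow D'$ induces a map of simplicial spaces $B_\bullet(\eta)$ which in degree $n$ is simply $\coprod \eta_{i_0}$. Since a disjoint union of (weak) homotopy equivalences is again a (weak) homotopy equivalence, the objectwise hypothesis on $\eta$ forces $B_\bullet(\eta)$ to be a levelwise (weak) homotopy equivalence. The entire problem therefore reduces to the \emph{realization lemma}: geometric realization carries levelwise (weak) equivalences between these bar constructions to (weak) equivalences of realizations.

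To prove that reduction I would filter $|B_\bullet(D)|$ by its skeleta $\mathrm{sk}_0 \subset \mathrm{sk}_1 \subset \cdots$, with $|B_\bullet(D)| = \mathsf{colim}_n\, \mathrm{sk}_n(D)$, and induct on $n$. Each inclusion $\mathrm{sk}_{n-1}(D) \hookrightarrow \mathrm{sk}_n(D)$ fits into a pushout square
\[
\begin{array}{ccc}
(L_n \times \Delta^n)\,\cup\,(B_n(D)\times \partial\Delta^n) & \longrightarrow & \mathrm{sk}_{n-1}(D)\\
\downarrow & & \downarrow\\
B_n(D)\times \Delta^n & \longrightarrow & \mathrm{sk}_n(D)
\end{array}
\]
where $L_n$ is the $n$-th latching (degenerate-simplex) space. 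Because the degeneracies of $B_\bullet(D)$ merely insert identity morphisms, they embed one coproduct summand into another and are therefore closed inclusions; this makes the left vertical map a closed cofibration, i.e. the bar construction is \emph{good}. Since $\eta$ is compatible with these squares and induces a (weak) equivalence on each corner (the latching spaces being again coproducts of the $D(i_0)$'s), the gluing lemma for pushouts along a cofibration gives a (weak) equivalence $\mathrm{sk}_n(D) \rightarrow \mathrm{sk}_n(D')$, completing the induction. Finally, because $|B_\bullet(D)|$ is a sequential colimit along the closed cofibrations $\mathrm{sk}_{n-1}\hookrightarrow \mathrm{sk}_n$, and (weak) equivalences are preserved under such colimits, the induced map $\mathsf{hocolim}(D) \rightarrow \mathsf{hocolim}(D')$ is a (weak) homotopy equivalence.

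The step I expect to be the main obstacle is verifying the cofibrancy hypotheses that make both the gluing lemma and the colimit step applicable, namely that the latching inclusions are genuine closed cofibrations. For arbitrary topological spaces and honest (non-weak) homotopy equivalences this is delicate; the clean remedy is to work throughout in a convenient category such as compactly generated weak Hausdorff spaces, where the summand inclusions assembling the bar construction are automatically closed cofibrations and the relevant gluing and colimit lemmas hold without extra hypotheses. With that convention the weak-equivalence case needs no further assumptions, and the honest-homotopy-equivalence case follows from the same skeletal induction using the Hurewicz (Str{\o}m) form of the gluing lemma.
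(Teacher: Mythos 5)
Your argument is correct in outline, but you should know that the paper itself does not prove Theorem \ref{hocohom} at all: it states the result and defers the proof to \cite[Proposition 3.7]{hocomb}, so there is no internal proof to compare against. What you have reconstructed is essentially the standard proof of that cited result: identify the coequalizer defining $\mathsf{hocolim}(D)$ with the geometric realization of the Bousfield--Kan simplicial replacement $B_\bullet(D)$, observe that a levelwise (weak) equivalence of diagrams induces a levelwise (weak) equivalence of these simplicial spaces (disjoint unions of equivalences are equivalences), and then invoke the realization lemma, proved by skeletal induction using the latching-object pushout squares, the gluing lemma, and preservation of equivalences under sequential colimits along closed cofibrations. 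Your key structural observation---that the bar construction is \emph{good} because degeneracies insert identity morphisms and hence are inclusions of coproduct summands---is exactly the point that makes the skeletal induction legitimate. Two caveats, both of the type you already flag: first, the identification $\mathsf{hocolim}(D)\cong |B_\bullet(D)|$ itself requires commuting realization past the coend and past products with the spaces $D(i)$, which, like the cofibrancy of the latching maps, is a point-set matter that is automatic in compactly generated (weak Hausdorff) spaces but needs NDR-pair arguments in plain $\mathbf{Top}$; second, for honest (non-weak) homotopy equivalences you correctly note that the Str{\o}m-type gluing lemma is what is needed, and this is precisely why the literature states such results under goodness/properness hypotheses. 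In short, your proposal supplies the self-contained argument that the paper outsources to its reference, at the cost of having to fix a convenient category of spaces in which the cofibrancy and realization identifications hold; the paper's citation buys brevity and inherits those hypotheses from the cited source.
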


When the indexing category has a \textit{terminal object}, that is, a unique object $z$ such that for all $i \in \mathbf{I}$ there is a unique morphism $i \rightarrow z$, then the homotopy colimit is particulary simple \cite[Lemma 6.8]{hoco},
\begin{theorem}\label{thm: hocoterminalobject}
	Suppose that $\mathbf{I}$ has a terminal object $z$. Then for all $D\in  \mathbf{Top}^\II$, the collapse map $\mathsf{hocolim}(D) \rightarrow D(z)$ is a weak homotopy equivalence.
\end{theorem}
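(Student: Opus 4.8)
The plan is to show that the collapse map is in fact a strong deformation retraction, using the terminal object $z$ to produce both a canonical section and a compatible family of contractions of the classifying spaces appearing in the model of Definition \ref{def:coandhoco}. First I would pin down the map itself: for each object $i$ the unique morphism $i\to z$ yields $D(i\to z)\colon D(i)\to D(z)$, and I would check that $(x,b)\mapsto D(i\to z)(x)$ (for $x\in D(i)$ and $b\in\mathcal{B}(i\downarrow\mathbf{I})^{op}$) descends to a well-defined $c\colon\mathsf{hocolim}(D)\to D(z)$. This amounts to verifying that the two structure maps of the coequalizer agree after postcomposition with $c$: on the top map this is the identity $D(j\to z)\circ D(i\to j)=D(i\to z)$ (functoriality of $D$ together with uniqueness of maps into $z$), and on the bottom map it is automatic since $c$ forgets the classifying-space coordinate. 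For a section, note that $(z,\mathrm{id}_z)$ is the initial object of $(z\downarrow\mathbf{I})$, giving a distinguished vertex $v_z\in\mathcal{B}(z\downarrow\mathbf{I})^{op}$; setting $s(x):=[(x,v_z)]$ defines $s\colon D(z)\to\mathsf{hocolim}(D)$ with $c\circ s=\mathrm{id}_{D(z)}$, since $D(\mathrm{id}_z)=\mathrm{id}_{D(z)}$.

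The heart of the argument is a homotopy $s\circ c\simeq \mathrm{id}_{\mathsf{hocolim}(D)}$. The key structural fact is that, because $z$ is terminal in $\mathbf{I}$, each under-category $(i\downarrow\mathbf{I})$ has a terminal object, namely $(z,\alpha_i)$ where $\alpha_i\colon i\to z$ is the unique morphism; equivalently $(z,\alpha_i)$ is initial in $(i\downarrow\mathbf{I})^{op}$. Consequently $\mathcal{B}(i\downarrow\mathbf{I})^{op}$ is contractible and carries a canonical contraction $\gamma_i$ onto the vertex $v_i'$ corresponding to $(z,\alpha_i)$, obtained by realizing the natural transformation from the constant functor at the initial object to the identity of $(i\downarrow\mathbf{I})^{op}$. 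A short computation with the coequalizer relations (applying the relation for $\phi=\alpha_i$ to the pair $(D(\alpha_i)(x),v_z)$) shows that $s\circ c$ sends a class $[(x,b)]$ with $x\in D(i)$ to $[(x,v_i')]$; that is, $s\circ c$ fixes the $D(i)$-coordinate and merely replaces $b$ by the distinguished vertex $v_i'$. Hence the family $H_i(x,b,t):=(x,\gamma_i(b,t))$ is a candidate homotopy from $\mathrm{id}$ (at $t=0$) to $s\circ c$ (at $t=1$), carried out entirely in the classifying-space coordinate.

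The main obstacle is to verify that the $H_i$ assemble into a single well-defined homotopy $H\colon\mathsf{hocolim}(D)\times[0,1]\to\mathsf{hocolim}(D)$, i.e.\ that they respect both structure maps of the coequalizer. This reduces to naturality of the contractions with respect to the maps $\mathcal{B}\big((i\to j)^*_{op}\big)$, which in turn follows because the precomposition functor $(i\to j)^*\colon(j\downarrow\mathbf{I})\to(i\downarrow\mathbf{I})$ sends the terminal object $(z,\alpha_j)$ to $(z,\alpha_j\circ(i\to j))=(z,\alpha_i)$, the terminal object of $(i\downarrow\mathbf{I})$; any functor preserving terminal (equivalently, on the opposites, initial) objects intertwines the canonical contractions, since maps into a terminal object are unique. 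Granting this compatibility, $s$ exhibits $D(z)$ as a strong deformation retract of $\mathsf{hocolim}(D)$, so $c$ is a homotopy equivalence, and in particular the weak homotopy equivalence asserted in the statement.
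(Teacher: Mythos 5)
Your proof is correct, but there is no in-paper argument to compare it with: the paper does not prove Theorem \ref{thm: hocoterminalobject}, it quotes it from the literature as \cite[Lemma 6.8]{hoco}. What you give is the natural self-contained proof for the explicit coequalizer model of Definition \ref{def:coandhoco}, and it yields more than the stated conclusion: $D(z)$ is exhibited as a deformation retract of $\mathsf{hocolim}(D)$, so the collapse map is an actual homotopy equivalence, not merely a weak one. The three pillars are all sound: $(z,\alpha_i)$ is terminal in $(i\downarrow\mathbf{I})$ because both the existence of a morphism $(c',\sigma)\to(z,\alpha_i)$ and the required identity $\tau\circ\sigma=\alpha_i$ are forced by uniqueness of maps into $z$; the identification $s\circ c\,[(x,b)]=[(x,v_i')]$ is exactly the coequalizer relation for $\alpha_i$ applied to $(x,v_z)$, since $\alpha_i^*(z,\mathrm{id}_z)=(z,\alpha_i)$; and the contractions intertwine the maps $\mathcal{B}\big((i\to j)^*_{op}\big)$ because the two whiskered natural transformations being compared have all components equal to morphisms into the terminal object $(z,\alpha_i)$, hence coincide. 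In a final write-up two routine points should be made explicit: the contraction $\gamma_i$ is the realization of the functor $(i\downarrow\mathbf{I})^{op}\times[1]\to(i\downarrow\mathbf{I})^{op}$ encoding your natural transformation, via $\mathcal{B}(\mathbf{C}\times[1])\cong\mathcal{B}\mathbf{C}\times[0,1]$ (unproblematic, as the nerve of $[1]$ is finite), and the family $H_i$ descends to a homotopy on $\mathsf{hocolim}(D)\times[0,1]$ because $-\times[0,1]$ preserves quotient maps, $[0,1]$ being locally compact; also, the parenthetical claim that the retraction is \emph{strong} needs the observation that the contraction fixes $v_i'$, which holds because the component of the transformation at the initial object is an identity morphism, hence a degenerate edge. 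Compared with the cited source, which proves the lemma in its own bar-construction model of the homotopy colimit, your argument buys self-containedness in the paper's model at the cost of the whiskering check; the underlying mechanism --- the terminal object contracting every under-category --- is the same.
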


Notice that by collapsing the classifying spaces
$\mathcal{B}(j \downarrow \mathbf{I})^{op}$ and $\mathcal{B}(i \downarrow \mathbf{I})^{op}$ in the definition of homotopy colimit to a point,
we recover the definition of the colimit.
Next, we will see specific conditions under which this collapse defines a homotopy equivalence
from $\mathsf{hocolim}(D)$ to $\mathsf{colim}(D)$. In order to state the result, we   recall the notion of a cofibration.

\begin{definition}
	A continuous map $f : A \rightarrow X$ is called a \textit{cofibration} if it satisfies the \textit{homotopy extension property} with respect to all spaces $Y$.
That is, given a homotopy $h_t : A \rightarrow Y$ and a map $H_0 : X \rightarrow Y$ such that $H_0\circ f = h_0 $,
then there is a homotopy $H_t : X \rightarrow Y$ such that $H_t\circ f = h_t$ for all $t$.
If $f(A)$ is a closed subset of $X $, then   $f$ is called a \textit{closed cofibration}.
\end{definition}

\begin{remark}\label{telecofibration}
The telescope filtration
$\mathcal{ T }\left(\mathcal{ X }\right) \in \mathbf{FTop}$ of $\mathcal{X} \in \mathbf{Top^N}$   satisfies that  $\mathcal{ T }_{i}(\mathcal{ X }) \hookrightarrow\mathcal{ T }_{j}(\mathcal{ X })$ is a closed cofibration for all $i\leq j$ \cite[Chapter VII, Theorem 1.5]{bredon}.
\end{remark}

Here is one situation where $\mathsf{colim}$ and $\mathsf{hocolim}$ provide similar answers,

\begin{lemma}[Projection Lemma]\label{projectionlemma}
Let $\mathcal{P}$ be a partially ordered set and $\mathbf{P}$ its poset category.
	Let $D \in  \mathbf{Top^P}$ be   such that $D(i\preceq j) : D(i) \rightarrow D(j)$ is a closed cofibration for each pair $i\preceq j$ in $\mathcal{P}$. Then the collapse map
	\begin{align*}
	\mathsf{hocolim}(D) \rightarrow \mathsf{colim}(D)
	\end{align*}
	is a homotopy equivalence.
\end{lemma}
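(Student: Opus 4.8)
The plan is to use the standard bar-construction model for the homotopy colimit over a poset and then collapse the simplicial directions one dimension at a time, using the closed-cofibration hypothesis to control each collapse. First I would record that, for the poset category $\mathbf{P}$, unwinding the coequalizer of Definition \ref{def:coandhoco} (together with the fact that the simplices of $\mathcal{B}(i\downarrow\mathbf{P})^{op}$ are indexed by the chains issuing from $i$) presents the homotopy colimit as the geometric realization of a simplicial space, namely
\[
\mathsf{hocolim}(D) \;=\; \left( \coprod_{n\geq 0}\ \coprod_{i_0 \preceq \cdots \preceq i_n} D(i_0)\times \Delta^n \right)\Big/\!\sim,
\]
where the $k$-th face of the cell indexed by $i_0\preceq\cdots\preceq i_n$ is glued to the cell indexed by the chain with $i_k$ deleted for $0<k\leq n$, and, for $k=0$, to the cell $D(i_1)\times\Delta^{n-1}$ via the map $D(i_0\preceq i_1)$. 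Under this model the collapse map is exactly the quotient forgetting every $\Delta^n$-coordinate, and one checks directly that collapsing $\Delta^n$ to a point turns the $d_0$-gluing into the colimit identification $x\sim D(i_0\preceq i_1)(x)$ and makes the remaining gluings degenerate, so the target is precisely the coequalizer presentation of $\mathsf{colim}(D)$.

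Next I would filter $\mathsf{hocolim}(D)$ by simplicial degree, letting $F_m$ be the image of the chains of length $\leq m$, so that $\mathsf{hocolim}(D)=\bigcup_m F_m$. Each step $F_{m-1}\hookrightarrow F_m$ fits into a pushout in which $F_m$ is obtained from $F_{m-1}$ by attaching the cells $D(i_0)\times\Delta^m$ along the subspace built from $D(i_0)\times\partial\Delta^m$ and the front-face maps $D(i_0\preceq i_1)$. The key point is that because every $D(i_0\preceq i_1)\colon D(i_0)\hookrightarrow D(i_1)$ is a closed cofibration, the simplicial space above is \emph{good}: its latching (degeneracy) inclusions are closed cofibrations, and consequently each attaching inclusion into $F_m$, hence each $F_{m-1}\hookrightarrow F_m$, is a closed cofibration. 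This is the precise place where the hypothesis is used; without it the collapse is only a weak equivalence, as already seen in Theorem \ref{thm: hocoterminalobject}.

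I would then show that the collapse map is a homotopy equivalence one stage at a time. The collapse respects the filtrations on source and target, and at stage $m$ it amounts to deformation retracting the freshly attached $\Delta^m$-factors onto a vertex compatibly with the face identifications. Since the relevant inclusions are NDR pairs, the gluing lemma for cofibrations and the homotopy extension property let me build such a deformation retraction relative to $F_{m-1}$, producing a homotopy equivalence of pairs. Finally I would assemble these stagewise equivalences: because the filtration consists of closed cofibrations and the homotopies are constructed to extend those already defined on $F_{m-1}$, the (possibly transfinite) colimit of the stagewise homotopy equivalences is again a homotopy equivalence, which gives the desired conclusion; homotopy invariance (Theorem \ref{hocohom}) and the cofibration structure of the telescope (Remark \ref{telecofibration}) are the same circle of ideas powering this passage to the limit.

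The step I expect to be the main obstacle is exactly this inductive bookkeeping: one must produce the collapsing deformation retractions of the $\Delta^m$-coordinates so that they are \emph{simultaneously} compatible with all the face gluings and with the homotopy already fixed on $F_{m-1}$. Coherently extending these partial homotopies is what forces the closed-cofibration (NDR-pair) hypothesis, via the homotopy extension property, and is the technical heart of the argument.
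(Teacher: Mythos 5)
First, note that the paper does not actually prove Lemma \ref{projectionlemma}: its ``proof'' is a citation to \cite[Proposition 3.1]{hocomb}. So your proposal has to stand on its own, and as written it has a genuine gap at its central step. Your setup is fine: over a poset, $\mathsf{hocolim}(D)$ is the realization of the simplicial replacement, it carries the skeletal filtration $F_0\subset F_1\subset\cdots$ by closed cofibrations, and the collapse map is the quotient forgetting the simplex coordinates. The inductive mechanism, however, does not work. There is no filtration of the target compatible with yours: already $F_0=\coprod_i D(i)$ is carried \emph{onto} all of $\mathsf{colim}(D)$, and the restriction $F_m\to\mathsf{colim}(D)$ is essentially never a homotopy equivalence for finite $m$ (take $m=0$, two points mapping to one), so there are no ``stagewise equivalences'' to assemble. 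More fundamentally, the collapse map cannot be realized by deformation retracting the $\Delta^m$-coordinates at all: any deformation retraction of $\mathsf{hocolim}(D)$ lands in a \emph{subspace} of $\mathsf{hocolim}(D)$, and pushing simplex coordinates to a vertex lands in $F_0=\coprod_i D(i)$, in which the various $D(i)$ remain disjoint; whereas $\mathsf{colim}(D)$ is a quotient of $F_0$ that is in general not a subspace of $\mathsf{hocolim}(D)$. Concretely, for the pushout poset $\{a\prec b,\ a\prec c\}$ and $D=(X\hookleftarrow A\hookrightarrow Y)$ with both maps closed cofibrations, $\mathsf{hocolim}(D)$ is the double mapping cylinder and $\mathsf{colim}(D)=X\cup_A Y$; each $a\in A$ has two images, one in each end, identified in the colimit but joined only by an arc in the homotopy colimit, so no embedding of $\mathsf{colim}(D)$ exists and no deformation retraction can induce the collapse. (A smaller symptom of the same confusion: the ``goodness'' you invoke is automatic---the degeneracies of the simplicial replacement are coproduct-summand inclusions, closed cofibrations with no hypothesis on $D$---so it cannot be where the assumption does its work.)

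The cofibration hypothesis is in fact consumed by a different induction: one inducts on the \emph{poset}, not on simplicial degree. For finite $\mathcal{P}$, choose a maximal element $m$ and set $\mathcal{P}'=\mathcal{P}\smallsetminus\{m\}$; then $\mathsf{colim}_{\mathcal{P}}(D)$ is the pushout of $\mathsf{colim}_{\mathcal{P}'}(D)\leftarrow\mathsf{colim}_{\mathcal{P}_{<m}}(D)\rightarrow D(m)$, while $\mathsf{hocolim}_{\mathcal{P}}(D)$ decomposes the same way with a mapping cylinder replacing the direct attachment to $D(m)$; the induction hypothesis then combines with the gluing lemma for homotopy equivalences of pushout diagrams, and it is exactly there that one needs the maps (and the induced maps of colimits) to be closed cofibrations. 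Arbitrary posets follow by a colimit argument along closed cofibrations. Note that for the one place the paper uses the lemma (Lemma \ref{lemma:tensorandgentensor}, where the indexing poset is $\triangle_k\subset\mathbb{N}^2$, a finite poset), the finite case already suffices.
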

\begin{proof}
See \cite[Proposition 3.1]{hocomb}.
\end{proof}

\section{Products of Diagrams of Spaces}\label{filtrations}

With the necessary background  in place, we now move towards the main results of the paper.
Given a category $\mathbf{S}$ of spaces (e.g., topological, metric, simplicial, etc) and a small indexing category $\mathbf{I}$,
our first objective is to identify relevant products
between $\mathbf{I}$-indexed diagrams $\mathcal{X},\mathcal{Y} \in \mathbf{S}^\mathbf{I}$.
For a particular product, the
goal  is to describe its persistent homology
in terms of the persistent homologies of $\mathcal{X}$ and $\mathcal{Y}$. This is what we call a \emph{persistent K\"unneth formula}.
Let us begin with the first product construction: the categorical product in $\mathbf{S^I}$.

\begin{definition}\label{def:catprod}
Let $\mathbf{S}$ be a category having all pairwise products (e.g., finitely complete), let
$\mathcal{X},\mathcal{Y} \in \mathbf{S^I}$, and let $\mathcal{X}\times \mathcal{Y} : \mathbf{I} \longrightarrow \mathbf{S}$
be the functor taking each object $i\in \mathbf{I}$  to the
(categorical)  product $X_i \times Y_i \in \mathbf{S}$,
and each morphism
$i\rightarrow j$ in $\mathbf{I}$   to   the unique morphism
$\mathcal{X}\times \mathcal{Y}(i\rightarrow j)$
making
the following diagram commute:
\begin{equation}\label{eq:productMorphism}
\begin{tikzcd}
&  X_i\times Y_i
\arrow[rd,"\mathcal{Y}(i \rightarrow j) \circ p_i^Y"]
\arrow[ld,"\mathcal{X}(i \rightarrow j) \circ p_i^X"']
\arrow[dashrightarrow]{d}
&  \\
X_j & X_j \times Y_j\arrow[r, "p_j^Y"']\arrow[l,"p_j^X"] &  Y_j
\end{tikzcd}
\end{equation}
The existence of $\mathcal{X}\times \mathcal{Y}(i\rightarrow j)$ follows from the universal property defining
$X_j \times Y_j$.
\end{definition}
We have the following observation:
\begin{prop}\label{catprod}
Let $\mathbf{S}$ be a category with all pairwise products.
Then, $\mathcal{X}\times \mathcal{Y} \in \mathbf{S^I}$ is the categorical product of $\mathcal{X},\mathcal{Y} \in \mathbf{S^I}$.
\end{prop}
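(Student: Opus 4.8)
The plan is to verify that $\mathcal{X}\times\mathcal{Y}$, equipped with suitable projection natural transformations, satisfies the universal property defining the categorical product in the functor category $\mathbf{S^I}$ (Definition \ref{prodcoprod}). First I would exhibit the projections $\pi^X : \mathcal{X}\times\mathcal{Y}\to\mathcal{X}$ and $\pi^Y : \mathcal{X}\times\mathcal{Y}\to\mathcal{Y}$ pointwise, setting $(\pi^X)_i := p_i^X$ and $(\pi^Y)_i := p_i^Y$, the structure projections of the product $X_i\times Y_i$ in $\mathbf{S}$. I would check that these families are genuinely natural transformations: for a morphism $i\to j$ in $\mathbf{I}$, naturality of $\pi^X$ is precisely the commutativity of the left triangle in diagram (\ref{eq:productMorphism}), namely $p_j^X\circ(\mathcal{X}\times\mathcal{Y})(i\to j) = \mathcal{X}(i\to j)\circ p_i^X$, which holds by the very definition of $(\mathcal{X}\times\mathcal{Y})(i\to j)$; the right triangle handles $\pi^Y$ analogously.

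Next, given any test object $\mathcal{Z}\in\mathbf{S^I}$ together with natural transformations $\alpha:\mathcal{Z}\to\mathcal{X}$ and $\beta:\mathcal{Z}\to\mathcal{Y}$, I would construct the mediating morphism pointwise. For each $i$, the universal property of $X_i\times Y_i$ in $\mathbf{S}$ yields a unique morphism $\gamma_i:Z_i\to X_i\times Y_i$ satisfying $p_i^X\circ\gamma_i=\alpha_i$ and $p_i^Y\circ\gamma_i=\beta_i$. The substance of the proof is then to show that the family $\gamma=\{\gamma_i\}$ is a natural transformation, i.e. that $(\mathcal{X}\times\mathcal{Y})(i\to j)\circ\gamma_i = \gamma_j\circ\mathcal{Z}(i\to j)$ for every $i\to j$.

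The main (and essentially only nontrivial) obstacle is establishing this naturality, and the key tool is the uniqueness clause of the universal property. Both composites $Z_i\to X_j\times Y_j$ are determined by their postcompositions with $p_j^X$ and $p_j^Y$, so it suffices to check these agree. For the $X$-projection I would compute, on one side, $p_j^X\circ(\mathcal{X}\times\mathcal{Y})(i\to j)\circ\gamma_i = \mathcal{X}(i\to j)\circ p_i^X\circ\gamma_i = \mathcal{X}(i\to j)\circ\alpha_i$, using the naturality of $\pi^X$ just established and the defining property of $\gamma_i$; on the other side, $p_j^X\circ\gamma_j\circ\mathcal{Z}(i\to j) = \alpha_j\circ\mathcal{Z}(i\to j)$, and naturality of $\alpha$ equates the two. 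The identical argument with $p_j^Y$ and $\beta$ handles the $Y$-projection. By the uniqueness in the universal property of $X_j\times Y_j$, the two composites coincide, so $\gamma$ is natural.

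Finally, uniqueness of $\gamma$ as a morphism in $\mathbf{S^I}$ is immediate: any mediating natural transformation must, at each $i$, be a mediating morphism $Z_i\to X_i\times Y_i$ for the pair $\alpha_i,\beta_i$, hence equal to $\gamma_i$ by the pointwise uniqueness already invoked. I would also remark in passing that the same uniqueness argument confirms $\mathcal{X}\times\mathcal{Y}$ is a well-defined functor—respecting identities and composition—since its value on each morphism is uniquely characterized by (\ref{eq:productMorphism}). Taken together, these verifications show that $\mathcal{X}\times\mathcal{Y}$ with $\pi^X,\pi^Y$ is the categorical product of $\mathcal{X}$ and $\mathcal{Y}$ in $\mathbf{S^I}$, which is exactly the content of the proposition.
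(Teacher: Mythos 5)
Your proposal is correct and follows essentially the same route as the paper: define the projections and the mediating morphism pointwise via the universal property of $X_i\times Y_i$ in $\mathbf{S}$, then deduce naturality and uniqueness. The only difference is one of detail—the paper compresses the naturality check of the mediating morphism into ``it readily follows,'' whereas you spell out the uniqueness-clause argument explicitly, which is exactly the verification the paper leaves implicit.
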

\begin{proof}
First, note that the existence of
$p^\mathcal{X} : \mathcal{X}\times \mathcal{Y} \longrightarrow \mathcal{X}$ and
$p^\mathcal{Y} : \mathcal{X}\times \mathcal{Y} \longrightarrow \mathcal{Y}$
follows from that of $p_i^X : X_i \times Y_i \longrightarrow X_i$ and $p_i^Y : X_i \times Y_i \longrightarrow Y_i$
for each $i\in \mathbf{I}$, and the commutativity
of (\ref{eq:productMorphism}) for each $i\rightarrow j$.

Let $\mathcal{Z}\in \mathbf{S^I}$, and let
$\mu: \mathcal{Z} \rightarrow \mathcal{X}$, $\nu : \mathcal{Z}  \rightarrow \mathcal{Y}$ be morphisms.
For each $i\in \mathbf{I}$,
let
$\mu_i\times \nu_i :Z_i \rightarrow X_i\times Y_i
$ be
the unique morphism   so that
$p_i^X\circ (\mu_i\times \nu_i) =  \mu_i$ and
$p_i^Y\circ (\mu_i\times \nu_i) =  \nu_i$.
It readily follows that
$\mu\times \nu : \mathcal{Z} \longrightarrow \mathcal{X}\times \mathcal{Y} $, given by
$(\mu\times \nu)(i) = \mu_i\times \nu_i$,
is the unique morphism of $\mathbf{I}$-indexed diagrams
such that $p^\mathcal{X} \circ (\mu \times \nu) = \mu$,
and $p^\mathcal{Y} \circ (\mu \times \nu) = \nu$.
\end{proof}

Let us describe in more detail the categories we have in mind for
$\mathbf{S}$, as well as what the categorical products are in each case.

\begin{example}[Topological spaces]
	Recall that $\mathbf{Top}$ denotes the category of topological spaces and continuous maps. For  $X,Y \in \mathbf{Top}$, their Cartesian product
equipped with the product topology is the categorical product of $X$ and $Y$ in $\mathbf{Top}$.
\end{example}

\begin{example}[Metric Spaces]
Let $\mathbf{Met}$  denote the category of metric spaces and
non-expansive maps. That is,
its objects are pairs $(X,d_X)$---a set and a metric--- and its morphisms are
functions $f: X \longrightarrow Y$ so that
$d_Y(f(x) , f(x')) \leq d_X(x, x')$ for all $x,x' \in X$.

\begin{definition}
Given   $(X,d_X),(Y,d_Y) \in \mathbf{Met}$, let $X\times Y$ denote the Cartesian product of the underlying sets, and let $d_{X\times Y}$ be the maximum metric on
$X\times Y$:
\[
d_{X\times Y}\big((x,y), (x',y')\big)
:=
\max\{d_X(x,x'), d_Y(y,y')\}.
\]
\end{definition}

Since the coordinate projections $p^X : X\times Y \longrightarrow X$
and  $p^Y : X\times Y \longrightarrow Y$  are non-expansive,
it readily follows that $(X\times Y , d_{X\times Y})$ is the categorical product
of $(X,d_X)$ and $(Y,d_Y)$ in $\mathbf{Met}$.
\end{example}

\begin{example}[Simplicial Complexes]\label{expl:Simp}
Recall that a simplicial complex   is a collection
$K$ of finite nonempty sets---called simplices---so that if $ \sigma \in K$ and $ \emptyset \neq \tau \subset \sigma $, then $\tau \in K$.
Let  $K^{(n)}\subset K$ be the collection of simplices of $K$ with $n+1$ elements---these are called
$n$-simplices and we  write $v\in K^{(0)}$ instead of $\{v\} \in K^{(0)}$.
A  function $f: K \longrightarrow K'$
between simplicial complexes is called a simplicial map if
$f(\{v_0 ,\ldots, v_n\}) = \{f(v_0), \ldots, f(v_n)\}$ for every $\{v_0,\ldots, v_n\} \in K$.
Let $\mathbf{Simp}$ be the category of simplicial complexes and simplicial maps.

\begin{definition}\label{def:Simp}
For   $K, K' \in \mathbf{Simp}$, let $K\times K'$ be the smallest simplicial complex containing all Cartesian products $\sigma  \times \sigma'$,
for $\sigma\in K$ and $\sigma' \in K'$.
\end{definition}

In other words, $\tau \in K\times K'$ if and only if
there exist $\sigma \in K$ and $\sigma' \in K'$ with $\tau \subset \sigma\times \sigma'$.
Let $p : \tau \rightarrow \sigma$ and $p' : \tau \rightarrow \sigma'$
be the projection maps onto the first and second coordinate, respectively.
Since $p(\tau) \subset \sigma$, then $p(\tau) \in K$, and similarly $p'(\tau) \in K'$.
Hence $p$ and $p'$ define simplicial maps
$
K \xleftarrow{p} K\times K' \xrightarrow{p'} K'
$,
and it readily follows that $K\times K'$ is the categorical product of $K$ and $K'$ in $\mathbf{Simp}$.

As mentioned in the Introduction, the \emph{Rips} complex at scale $\epsilon \in \mathbb{R}$ is a simplicial complex which can be associated to any metric space  $(X,d_X)$.
It is widely used in TDA---when $(X,d_X)$ is the observed data set---and it is defined as
\begin{equation}\label{eq:DefRips}
	R_\epsilon({X})
	:=
	\left\{
	\{x_0,\ldots, x_n\} \subset {X} : \max_{0 \leq i, j \leq n} d_X(x_i,x_j) < \epsilon
	\right\}.
\end{equation}
Notice that any non-expansive function $f: X \rightarrow Y$   between metric spaces  extends to a simplicial map $R_\epsilon(f) : R_\epsilon(X) \rightarrow R_\epsilon(Y)$, and
thus $R_\epsilon$ defines a functor from $\mathbf{Met}$ to $\mathbf{Simp}$.
This functor is in fact  compatible with the   categorical products:

\begin{lemma}\label{lemma: rips} Let $(X,d_X)$, $(Y,d_Y)$ be metric spaces, and let
$\epsilon \in \mathbb{R}$. Then,
\[
R_\epsilon(X\times Y) = R_\epsilon(X)\times R_\epsilon(Y).
\]
\end{lemma}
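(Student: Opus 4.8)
The plan is to prove the equality of simplicial complexes by double inclusion, working directly from the definition of the Rips complex (\ref{eq:DefRips}) and the characterization of the categorical product in $\mathbf{Simp}$ (Definition \ref{def:Simp}). Both sides are collections of finite nonempty subsets of the common vertex set $X\times Y$, so it suffices to check that a finite nonempty $\tau \subset X\times Y$ belongs to one complex if and only if it belongs to the other. The crucial structural fact is that the maximum metric decouples: for any two vertices $(x,y),(x',y') \in X\times Y$ one has $d_{X\times Y}\big((x,y),(x',y')\big) < \epsilon$ if and only if \emph{both} $d_X(x,x') < \epsilon$ and $d_Y(y,y') < \epsilon$. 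I expect this single observation to carry the entire argument; the only point requiring care is the bookkeeping between a simplex of the product and the coordinate projections of its vertices onto each factor.

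First I would write $\tau = \{(x_0,y_0),\ldots,(x_n,y_n)\}$ and let $\sigma = \{x_0,\ldots,x_n\}$ and $\sigma' = \{y_0,\ldots,y_n\}$ be the images of $\tau$ under the two coordinate projections (finite nonempty subsets of $X$ and $Y$ respectively). For the inclusion $R_\epsilon(X\times Y) \subseteq R_\epsilon(X)\times R_\epsilon(Y)$, suppose $\tau \in R_\epsilon(X\times Y)$. Then $\max\{d_X(x_i,x_j), d_Y(y_i,y_j)\} < \epsilon$ for all $i,j$, which forces $d_X(x_i,x_j) < \epsilon$ and $d_Y(y_i,y_j) < \epsilon$ for all $i,j$; hence $\sigma \in R_\epsilon(X)$ and $\sigma' \in R_\epsilon(Y)$. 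Since every vertex $(x_i,y_i)$ lies in $\sigma\times\sigma'$, we have $\tau \subset \sigma\times\sigma'$, and by the characterization of the categorical product this yields $\tau \in R_\epsilon(X)\times R_\epsilon(Y)$.

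For the reverse inclusion, suppose $\tau \in R_\epsilon(X)\times R_\epsilon(Y)$. By Definition \ref{def:Simp} there exist $\alpha \in R_\epsilon(X)$ and $\beta \in R_\epsilon(Y)$ with $\tau \subset \alpha\times\beta$, so each vertex of $\tau$ has the form $(x_i,y_i)$ with $x_i \in \alpha$ and $y_i \in \beta$. Because $\alpha$ and $\beta$ are Rips simplices, their $X$- and $Y$-diameters are both less than $\epsilon$, so any two vertices of $\tau$ satisfy $d_X(x_i,x_j) < \epsilon$ and $d_Y(y_i,y_j) < \epsilon$, whence $d_{X\times Y}\big((x_i,y_i),(x_j,y_j)\big) < \epsilon$ and therefore $\tau \in R_\epsilon(X\times Y)$. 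The argument is essentially a tautology once the definitions are unwound; the only genuine input is the decoupling property of the maximum metric, which is precisely why the statement would fail for other choices such as the $L^1$ (sum) metric.
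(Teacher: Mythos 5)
Your proof is correct, and it is essentially the canonical argument. Note that the paper does not actually write out a proof of this lemma: it simply cites Proposition 10.2 of \cite{adams}, and the argument given there is the same definition-unwinding double inclusion you perform, resting on the single observation that the maximum metric decouples, i.e.\ $\max\{d_X(x,x'),d_Y(y,y')\}<\epsilon$ if and only if both $d_X(x,x')<\epsilon$ and $d_Y(y,y')<\epsilon$. So what your write-up buys is self-containedness rather than a new idea. Two small points in your favor: first, you correctly handle the bookkeeping that the coordinate projections $\sigma,\sigma'$ of $\tau$ may have fewer elements than $\tau$ (when coordinates coincide), which is harmless since they remain finite, nonempty subsets of diameter $<\epsilon$, hence simplices of $R_\epsilon(X)$ and $R_\epsilon(Y)$ by (\ref{eq:DefRips}); second, your use of the characterization of the categorical product from Definition \ref{def:Simp} ($\tau\in K\times K'$ iff $\tau\subset\sigma\times\sigma'$ for some $\sigma\in K$, $\sigma'\in K'$) is exactly the right formulation to make both inclusions one-line checks. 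Your closing remark is also accurate: the decoupling property is precisely what fails for the $L^1$ metric, which is why the results of \cite{summetric} for the sum metric require entirely different methods and are limited to low homological dimensions.
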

\begin{proof}
See the proof of Proposition 10.2 in \cite{adams}.
\end{proof}

One drawback of the  categorical product in $\mathbf{Simp}$
is that it is not well-behaved  with respect to geometric realizations.
Indeed, recall that the geometric realization of a simplicial complex $K$---denoted $|K|$---is the collection of all
functions $\varphi: K^{(0)} \rightarrow [0,1]$ so that
$\{v\in K^{(0)}: \varphi(v) \neq 0\}\in K$, and for which $\sum\limits_{v\in K^{(0)}}\varphi(v) = 1$.
Moreover, if $\varphi, \psi \in |K|$,  then
\[
d_{|K|}(\varphi,\psi) :=
\sum_{v\in K^{(0)}} |\varphi(v) - \psi(v)|
\]
defines a metric   on $|K|$.
Every simplicial map $f: K \rightarrow L$ induces
a function $|f|: |K| \rightarrow |L|$ given by
\[
|f|(\varphi)(w)
=
\sum_{\substack{v\in K^{(0)} \\ f(v) = w}} \varphi(v)
\;\;\;\;\;
\mbox{ , }
\;\;\;\;\;
|f|(\varphi)(w) = 0 \; \mbox{ if }\;w \notin f(K^{(0)})
\]
which, as one can check, is non-expansive.
In other words, geometric realization defines a functor
$|\cdot | : \mathbf{Simp} \longrightarrow \mathbf{Met} $.
To see the incompatibility of $|\cdot|$ and the product in $\mathbf{Simp}$,
let $K = L = \{0 ,1 , \{0,1\}\}$.
It follows that $|K\times L|$ is (homeomorphic to) the geometric
3-simplex
$
\left\{
(t_0, \ldots, t_3) \in \mathbb{R}^4 : t_j \geq 0,\; t_0 + \cdots + t_3 =1
\right\}
$, while $|K|\times |L|$ is the unit square $[0,1]\times [0,1]\subset \mathbb{R}^2$.
Hence,
$|K\times L|$ is in general not   equal, nor  homeomorphic to $|K|\times |L|$.
This leads one to consider other alternatives;
specifically,  the category of \emph{ordered
simplicial complexes}.
\end{example}

\begin{example}[Ordered Simplicial Complexes]\label{expl:oSimp}
If
$K^{(0)}$ comes equipped with a partial order so that each simplex  is   totally ordered,
then we say that $K$ is an ordered simplicial complex.
A simplicial map between ordered simplicial complexes is said to be order-preserving,
if it is so between 0-simplices.
Let $\mathbf{oSimp}$ denote the resulting category.
For data analysis applications, and particularly for the Rips complex construction, $\mathbf{oSimp}$ is perhaps more relevant than $\mathbf{Simp}$. Indeed,
a data set $(X,d_X)$ stored in a computer  comes  with an explicit total   order on $X$.
If $ K,K'\in \mathbf{oSimp}$, with partial orders $\preceq, \preceq'$, respectively,
and $\sigma \in K$, $\sigma' \in K'$, then the Cartesian product
$\sigma \times \sigma'$ has a partial order $\preceq^\times$
given by $(v,v') \preceq^\times (w,w')$ if and only if $v \preceq w$ and $v' \preceq' w'$.

\begin{definition}\label{def:oSimp}
For $K,K'\in \mathbf{oSimp}$,
let  $K\oslash K'$ be defined as follows: $\tau \in K \oslash K'$
if and only if there exist $\sigma \in K$ and $\sigma' \in K'$ so that
$\tau$ is a totally ordered subset of   $\sigma \times \sigma'$, with respect to  $\preceq^\times$.
\end{definition}
Just like for  simplicial complexes, the coordinate projections
$p$ and $p'$ induce order-preserving simplicial maps
$p: K\oslash K' \rightarrow K$ and $p' : K\oslash K' \rightarrow K'$.
Moreover,

\begin{lemma}\label{osimpandsimp}
Let $K$ and $K'$ be ordered simplicial complexes. Then,
\begin{enumerate}
  \item $K\oslash K'$ is their categorical product in $\mathbf{oSimp}$.
  \item $K\oslash K' \subset K\times K'$, where the right hand side is the categorical product in $\mathbf{Simp}$.
      Moreover, $|K\oslash K'|$ is a  deformation retract of $|K\times K'|$.
  \item The product map $|p|\times |p'| : |K\oslash K'| \longrightarrow |K|\times |K'|$ is a homeomorphism.
\end{enumerate}
\end{lemma}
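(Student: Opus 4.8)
The plan is to prove the three claims in the order (1), (3), (2), because the deformation retraction in (2) becomes almost immediate once the homeomorphism in (3) is available. For (1), the projections $p$ and $p'$ have already been shown to be order-preserving simplicial maps, so it only remains to verify the universal property in $\mathbf{oSimp}$. Given an ordered simplicial complex $L$ with order-preserving simplicial maps $f : L \to K$ and $f' : L \to K'$, the requirements $p \circ h = f$ and $p' \circ h = f'$ force the vertex assignment $h(v) = (f(v), f'(v))$, so uniqueness is automatic once $h$ is shown to be a morphism. For that I would check that $h$ is order-preserving (if $v \preceq w$ in $L$ then $f(v)\preceq f(w)$ and $f'(v)\preceq' f'(w)$, i.e. $h(v)\preceq^\times h(w)$) and that $h$ sends each simplex $\tau\in L$ to a simplex of $K\oslash K'$: indeed $h(\tau)\subseteq f(\tau)\times f'(\tau)$ with $f(\tau)\in K$ and $f'(\tau)\in K'$, and $h(\tau)$ is totally ordered, being the image of the chain $\tau$ under the order-preserving map $h$. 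The inclusion asserted in (2), $K\oslash K'\subseteq K\times K'$, is then purely definitional: a chain contained in some $\sigma\times\sigma'$ is in particular a subset of $\sigma\times\sigma'$, hence a simplex of the unordered product. Thus $|K\oslash K'|$ sits inside $|K\times K'|$ as a subcomplex, and the real content of (2) is the deformation retraction.

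For (3), I would identify a point $\varphi\in|K\oslash K'|$ with a finitely supported mass distribution $\varphi : K^{(0)}\times K'^{(0)}\to[0,1]$ of total mass $1$ whose support is a chain; then $|p|(\varphi)$ and $|p'|(\varphi)$ are exactly its two marginals $\alpha(v)=\sum_{v'}\varphi(v,v')$ and $\beta(v')=\sum_{v}\varphi(v,v')$. So $|p|\times|p'|$ sends $\varphi$ to the pair of marginals $(\alpha,\beta)$, and proving (3) reduces to showing that a pair $(\alpha,\beta)\in|K|\times|K'|$ with carriers $\sigma=\{u_0\prec\cdots\prec u_p\}$ and $\sigma'=\{w_0\prec'\cdots\prec'w_q\}$ admits a \emph{unique} coupling supported on a chain of $\sigma\times\sigma'$. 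This is the comonotone (north-west corner) coupling: writing $S_i=\sum_{k\le i}\alpha(u_k)$ and $T_j=\sum_{k\le j}\beta(w_k)$ for the cumulative weights, one reads off $\varphi(u_i,w_j)$ as the length of the overlap of the intervals $(S_{i-1},S_i]$ and $(T_{j-1},T_j]$ inside $[0,1]$. Monotonicity of the cumulatives forces the support to be an increasing staircase, i.e. a chain, and this is visibly the unique such coupling. This is precisely the classical staircase triangulation of the prism $\Delta^p\times\Delta^q$; it furnishes the set-theoretic inverse $r$ of $|p|\times|p'|$ and shows simplex by simplex that the map is affine (hence continuous) with a piecewise-linear, continuous inverse, so it is a homeomorphism — by compactness for finite complexes, and in general from continuity of both maps in the $\ell^1$ metric on realizations.

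With (3) in hand, (2) follows cleanly. I would set $r:=(|p|\times|p'|)^{-1}:|K|\times|K'|\to|K\oslash K'|$ and define the retraction $R:=r\circ(|p|\times|p'|):|K\times K'|\to|K\oslash K'|$, now using the projections of the unordered product; $R$ is continuous and restricts to the identity on $|K\oslash K'|$. For any $\varphi\in|K\times K'|$, both $\varphi$ and $R(\varphi)$ are couplings with the same marginals $(\alpha,\beta)=(|p|(\varphi),|p'|(\varphi))$, so both are supported in the common simplex $\sigma\times\sigma'\in K\times K'$, where $\sigma,\sigma'$ are the carriers of $\alpha,\beta$. Since the set of couplings with fixed marginals is a convex transportation polytope, and every distribution supported on $\sigma\times\sigma'$ is a point of $|K\times K'|$, the straight-line homotopy
\[
H(\varphi,t) = (1-t)\,\varphi + t\,R(\varphi)
\]
stays in $|K\times K'|$, is jointly continuous, satisfies $H(\cdot,0)=\mathrm{id}$ and $H(\cdot,1)=R$, and fixes $|K\oslash K'|$ pointwise; hence it is a (strong) deformation retraction onto $|K\oslash K'|$.

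The hard part will be (3): claims (1) and (2) are formal or reduce to a one-line convexity argument, whereas (3) packages the nontrivial combinatorics of the prism subdivision together with the point-set care needed to promote a continuous bijection to a homeomorphism in the metric topology on realizations. Reducing (3) to the case of a single pair of simplices $\sigma,\sigma'$ — so that $|K|\times|K'|$ is literally a product of two geometric simplices — is what makes the comonotone-coupling inverse transparent and confines all of the analysis to a standard, well-understood model.
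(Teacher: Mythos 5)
Your proposal is correct, but it takes a genuinely different route from the paper, which does not argue the lemma at all: its proof is a citation to Eilenberg--Steenrod \cite{ES} (Definitions 8.1, 8.8 and Lemmas 8.9, 8.11). What you give is a self-contained reconstruction. Part (1) is the routine universal-property check and is fine as written. Part (3) identifies the inverse of $|p|\times|p'|$ explicitly as the monotone (northwest-corner) coupling of the two marginals; this is the classical staircase subdivision of $\Delta^p\times\Delta^q$ in optimal-transport language, and it is the right mechanism. Part (2) then produces a \emph{strong} deformation retraction for free: the straight-line homotopy from a coupling $\varphi$ to the recoupling $R(\varphi)$ of its marginals stays inside the closed simplex on $\sigma\times\sigma'$ (your convexity observation is valid, since the carrier of $\varphi$ projects onto the carriers $\sigma,\sigma'$ of its marginals, so both $\varphi$ and $R(\varphi)$ are supported in $\sigma\times\sigma'\in K\times K'$). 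Compared with the citation, your route buys an explicit inverse and an explicit natural homotopy; it is essentially the argument behind the classical lemmas, so the two approaches are compatible rather than competing.

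Two places need to be filled in before this is airtight. First, ``visibly the unique such coupling'' is carrying all of the injectivity of $|p|\times|p'|$ and deserves its short argument: if $\varphi$ is any coupling of $(\alpha,\beta)$ supported on a chain, then monotonicity of the chain forces
\[
\varphi(u_i,w_j)\;\le\;\max\bigl(0,\ \min(S_i,T_j)-\max(S_{i-1},T_{j-1})\bigr)
\]
for every pair $(i,j)$, and since both sides sum to $1$ over all $(i,j)$, equality holds throughout. Second, and more substantively, for infinite complexes the continuity of the inverse genuinely requires a metric estimate, not piecewise linearity: the paper's realizations carry the $\ell^1$ metric topology, and continuity on every product of closed simplices only yields continuity for the coherent (weak) topology, which is finer; so ``piecewise-linear, hence continuous'' does not close the general case. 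Moreover the northwest-corner map is not uniformly continuous---take both marginals uniform on a chain of $n$ vertices and perturb one by moving mass $\epsilon$ from the last vertex to the first: the marginal moves by $2\epsilon$ in $\ell^1$ while the coupling moves by $2(n-1)\epsilon$---so the estimate must be local. It is true: at a point $(\alpha,\beta)$ whose carriers have dimensions $p,q$ and minimal vertex mass $\eta>0$, any perturbation of $\ell^1$-size $D<\eta$ has carriers containing $\sigma,\sigma'$, the generalized inverse CDFs change on a set of measure $O\bigl((p+q)D\bigr)$, and hence the coupling changes by $O\bigl((p+q)D\bigr)$ in $\ell^1$. That computation is exactly the ``point-set care'' you deferred, and it should appear in the final write-up.
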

\begin{proof}
  See \cite{ES}, specifically Definitions 8.1, 8.8 and Lemmas 8.9, 8.11.
\end{proof}
\end{example}

The above are the categories of spaces and the products we will consider moving forward.
Going back to diagrams in $\mathbf{Top      }$, recall that a CW-complex is a topological
space $X$ together with a CW-structure.
That is, a filtration $\mathcal{X} = \{X^{(k)} \}_{k\in \mathbb{N}}$ of $X$, so that $X^{(0)}$ has the discrete topology,  $X^{(k)}$ is obtained from $X^{(k-1)}$ by attaching
$k$-dimensional cells $ D^k \cong  e^k_\alpha \subset X$ via continuous  maps $\varphi_\alpha: \partial D^k \rightarrow X^{(k-1)}$, and so that the topology on $X$ coincides with the weak topology induced by
$\mathcal{X}$.
It follows that   $\mathcal{X}\in \mathbf{FTop} \subset \mathbf{Top^N}$.
In this context, a cellular map between CW-complexes---i.e. a continuous
function $f: X \rightarrow Y$ so that $f\left(X^{(k)}\right) \subset Y^{(k)}$ for all $k$---is exactly a morphism in $\mathbf{Top^N}$.
Thus, if $\mathbf{CW}$ denotes the category of locally compact CW-complexes
(we will see in a moment why this restriction) and cellular maps,
then $\mathbf{CW}$ is a full subcategory of $\mathbf{FTop}$.

If $X$ and $Y$ are CW-complexes, then their topological product $X\times Y$
can also be written as a union of cells.
Specifically, the $k$-cells of $X\times Y$ are the products $e_\alpha^i \times e^j_\beta$ with $k = i+j$, for $e_\alpha^i$ an $i$-cell of $X$, and $e^j_\beta$ a $j$-cell of $Y$.
Thus, if
\begin{equation}\label{eq:CWproduct}
(X\times Y)^{(k)} := \bigcup\limits_{i+ j = k} X^{(i)} \times Y^{(j)}
\;\;\;\; , \;\;\;\;
k \in \mathbb{N}
\end{equation}
then $\left\{(X\times Y)^{(k)}\right\}_{k\in \mathbb{N}}$ will be a CW-structure
for $X\times Y$ provided the product topology coincides with the
 weak topology induced by (\ref{eq:CWproduct}).
If either $X$ or $Y$ are locally compact, then this will be the case
(see Theorem A.6 in \cite{hatcher}).

In summary: the categorical product in $\mathbf{Top^N}$
does not  recover the usual product of CW-complexes,
which suggests the existence of other useful (non-categorial) products in
$\mathbf{FTop}$ and
$\mathbf{Top^N}$.
The product of CW-complexes suggests the following definition,

\begin{definition}
If $\mathcal{X},\mathcal{Y} \in \mathbf{FTop}$, then their tensor product is the filtered space
\[
\mathcal{X}\otimes \mathcal{Y}
:=
\left\{
\bigcup\limits_{i + j = k} X_i \times Y_j
\right\}_{k\in \mathbb{N}}.
\]

\end{definition}

The tensor product of filtered spaces  is sometimes regarded in the literature   as the de facto  product filtration;
see for instance \cite{relhomotopy, groupoid, pacific} or \cite[Chapter 27]{modhomotopy}.
For general objects in $\mathbf{Top^N}$---e.g., when the
internal maps are not  inclusions---taking the union of, say,
$X_{i+1}\times Y_{j}$ and $X_{i}\times Y_{j+1}$ does not make sense. Instead,
the corresponding operation would be  to glue these spaces along the images of the product maps
\[
X_{i+1}\times Y_j \longleftarrow X_i\times Y_j \longrightarrow X_i \times Y_{j + 1}.
\]
Here is where the machinery of homotopy colimits comes in as a means to defining
a generalized tensor product in $\mathbf{Top^N}$:

\begin{definition}\label{definition: TPF}
	For $k\in \mathbb{N}$,
	let $\triangle_k $ be the poset category of $ \{(i,j) \in \mathbb{N}^2: i + j \leq k\}$ with its usual product order, and
	for $\mathcal{X}, \mathcal{Y} \in \mathbf{Top^N}$, let
	\[
	\mathcal{X} \boxtimes  \mathcal{Y} : \NN^2 \longrightarrow \mathbf{Top}
	\]
	be the functor sending $(i,j)$ to $X_i \times Y_j$, and
	$(i,j) \leq (s,t)$ to  $\mathcal{X}(i\leq s) \times \mathcal{Y}(j \leq t)$.
	The \emph{generalized tensor product} of $\mathcal{X}$ and $\mathcal{Y}$
	is the object $\mathcal{X}\otimes_{\textbf{g}} \mathcal{Y} \in \mathbf{Top^N}$
	given by
	\begin{align}\label{eq:gentensor}
    \begin{split}
		\mathcal{X}\otimes_\textbf{g}\mathcal{Y} (k) &:=
		\mathsf{hocolim}(\mathcal{X}\boxtimes \mathcal{Y}|_{\triangle_k}) \\
		\mathcal{X}\otimes_\textbf{g}\mathcal{Y} (k\leq k')
		&:=
		\mathsf{hocolim}\left(\mathcal{X}\boxtimes \mathcal{Y}|_{\triangle_k}  \rightarrow \mathcal{X}\boxtimes \mathcal{Y}|_{\triangle_{k'}} \right)
    \end{split}
	\end{align}
	where the latter  is the continuous map associated to  the change of indexing categories induced by  the inclusion
	$\triangle_{k} \subset  \triangle_{k'}$.
\end{definition}
\begin{remark}
The definition of the generalized tensor product can be extended to diagrams indexed by other subcategories
of $\RR$.
For example, to the poset categories of $\mathbb{R}$, $\mathbb{R}_+$
(the set of non negative reals) and $\mathbb{R}_\ast$ (the set of positive reals).
In general, for   $\mathbf{I} = \NN$, $\RR$, $\mathbf{R_+}$, or $\mathbf{R_\ast}$, we let
$\triangle_r = \{(i,j) \in \mathbf{I}^2: i + j \leq r\}$ and define
$\mathcal{X}\otimes_{\mathbf{g}}\mathcal{Y} : \mathbf{I} \longrightarrow \mathbf{Top}$ as in  (\ref{eq:gentensor}).
Although $\triangle_r$  and $\mathcal{ X } \boxtimes \mathcal{ Y }$ are different for each choice of $\mathbf{I}$, we will use the same notation for aesthetic purposes.
The choice will be clear from the context   as we will specify the $\mathbf{I}$ in $\mathbf{Top^I}$.
\end{remark}

We have the following relation between $\otimes$ and $\otimes_\mathbf{g}$,

\begin{lemma}\label{lemma:tensorandgentensor} Let $\mathcal{X}, \mathcal{Y} \in \mathbf{Top^N} $, and let $\mathcal{T(X)}, \mathcal{T(Y)} \in \mathbf{FTop}$ be their telescope filtrations.
Then, $\mathcal{X}\otimes_{\mathbf{g}} \mathcal{Y}$ is naturally homotopy equivalent to $\mathcal{T(X)}\otimes \mathcal{T(Y)}$.
\end{lemma}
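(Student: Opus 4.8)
The plan is to factor the desired equivalence through the generalized tensor product of the two telescope filtrations, producing a zig-zag of natural homotopy equivalences
\[
\mathcal{X}\otimes_{\mathbf{g}}\mathcal{Y}
\;\xleftarrow{\;\simeq\;}\;
\mathcal{T(X)}\otimes_{\mathbf{g}}\mathcal{T(Y)}
\;\xrightarrow{\;\simeq\;}\;
\mathcal{T(X)}\otimes\mathcal{T(Y)}
\]
in $\mathbf{Top^N}$, and then compose. The first step is to record the canonical retraction: for each $i$, the mapping telescope $\mathcal{T}_i(\mathcal{X})$ deformation retracts onto its terminal slice $X_i\times\{i\}\cong X_i$, and I write $r^{\mathcal{X}}_i:\mathcal{T}_i(\mathcal{X})\to X_i$ for the resulting map, which collapses a point of $X_\ell\times[\ell,\ell+1]$ to its image under $f_{i-1}\circ\cdots\circ f_\ell$ in $X_i$. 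Unlike the slice inclusion $X_i\hookrightarrow\mathcal{T}_i(\mathcal{X})$, this retraction is \emph{strictly} natural: one checks directly that $f_i\circ r^{\mathcal{X}}_i=r^{\mathcal{X}}_{i+1}\circ\big(\mathcal{T}_i(\mathcal{X})\hookrightarrow\mathcal{T}_{i+1}(\mathcal{X})\big)$, so $r^{\mathcal{X}}:\mathcal{T(X)}\to\mathcal{X}$ is a morphism in $\mathbf{Top^N}$ which is an objectwise homotopy equivalence, and likewise for $r^{\mathcal{Y}}$. This strictness, together with the observation that it forces the map to run \emph{from} the telescope to $\mathcal{X}$ rather than the reverse, is the key point that makes the diagram-level comparison go through; I expect verifying it to be the main technical subtlety.

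With the retraction in hand, the left arrow is produced as follows. Taking products, $r^{\mathcal{X}}\boxtimes r^{\mathcal{Y}}:\mathcal{T(X)}\boxtimes\mathcal{T(Y)}\to\mathcal{X}\boxtimes\mathcal{Y}$ is a natural transformation of $\NN^2$-indexed diagrams whose value at $(i,j)$ is the product homotopy equivalence $r^{\mathcal{X}}_i\times r^{\mathcal{Y}}_j$. Restricting to $\triangle_k$ and applying $\mathsf{hocolim}$, Theorem \ref{hocohom} gives a homotopy equivalence
\[
(\mathcal{T(X)}\otimes_{\mathbf{g}}\mathcal{T(Y)})(k)=\mathsf{hocolim}\big(\mathcal{T(X)}\boxtimes\mathcal{T(Y)}|_{\triangle_k}\big)\xrightarrow{\;\simeq\;}\mathsf{hocolim}\big(\mathcal{X}\boxtimes\mathcal{Y}|_{\triangle_k}\big)=(\mathcal{X}\otimes_{\mathbf{g}}\mathcal{Y})(k),
\]
and functoriality of the change-of-indexing maps (Proposition \ref{prop:hocolimitcommute}) makes these compatible as $k$ varies, yielding a natural homotopy equivalence $\mathcal{T(X)}\otimes_{\mathbf{g}}\mathcal{T(Y)}\xrightarrow{\simeq}\mathcal{X}\otimes_{\mathbf{g}}\mathcal{Y}$.

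For the right arrow I would invoke the Projection Lemma. Every comparison map in $\mathcal{T(X)}\boxtimes\mathcal{T(Y)}|_{\triangle_k}$ has the form $\mathcal{T}_i(\mathcal{X})\times\mathcal{T}_j(\mathcal{Y})\hookrightarrow\mathcal{T}_{i'}(\mathcal{X})\times\mathcal{T}_{j'}(\mathcal{Y})$; since each telescope inclusion is a closed cofibration (Remark \ref{telecofibration}) and a product of closed cofibrations is again a closed cofibration, Lemma \ref{projectionlemma} applies and the collapse map $\mathsf{hocolim}\to\mathsf{colim}$ is a homotopy equivalence. It then remains to identify the colimit: as all the maps are inclusions, $\mathsf{colim}\big(\mathcal{T(X)}\boxtimes\mathcal{T(Y)}|_{\triangle_k}\big)=\bigcup_{i+j\le k}\mathcal{T}_i(\mathcal{X})\times\mathcal{T}_j(\mathcal{Y})$, and monotonicity of the telescopes ($\mathcal{T}_i\times\mathcal{T}_j\subset\mathcal{T}_i\times\mathcal{T}_{k-i}$ whenever $i+j\le k$) collapses this to $\bigcup_{i+j=k}\mathcal{T}_i(\mathcal{X})\times\mathcal{T}_j(\mathcal{Y})=(\mathcal{T(X)}\otimes\mathcal{T(Y)})(k)$. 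Since the collapse maps commute with the inclusions of indexing posets $\triangle_k\subset\triangle_{k+1}$, and on the colimit side these inclusions are exactly the structure maps of the filtered space $\mathcal{T(X)}\otimes\mathcal{T(Y)}$, the equivalences are again natural in $k$.

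Composing the two natural equivalences through $\mathcal{T(X)}\otimes_{\mathbf{g}}\mathcal{T(Y)}$ then gives the claimed natural homotopy equivalence $\mathcal{X}\otimes_{\mathbf{g}}\mathcal{Y}\simeq\mathcal{T(X)}\otimes\mathcal{T(Y)}$. Beyond the strict naturality of the retraction flagged above, the only remaining points to check are the permanence of closed cofibrations under products and the colimit identification, both of which are routine.
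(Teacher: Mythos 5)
Your proof is correct, and the comparison with the paper's is instructive because the two arguments share the same skeleton but differ in exactly one step, and the difference matters. The paper also compares homotopy colimits over $\triangle_k$ via Theorem \ref{hocohom}, collapses $\mathsf{hocolim}$ to $\mathsf{colim}$ via the Projection Lemma \ref{projectionlemma} together with Remark \ref{telecofibration}, and identifies the colimit of the diagram of inclusions with $\big(\mathcal{T(X)}\otimes\mathcal{T(Y)}\big)_k$; but for the first step it uses the slice inclusions $X_i \hookrightarrow \mathcal{T}_i(\mathcal{X})$ to produce an ``induced map'' $\mathsf{hocolim}\big(\mathcal{X}\boxtimes\mathcal{Y}|_{\triangle_k}\big) \to \mathsf{hocolim}\big(\mathcal{T(X)}\boxtimes\mathcal{T(Y)}|_{\triangle_k}\big)$. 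Strictly speaking, those inclusions do not form a morphism of diagrams: the square built from $X_i \to X_j$ and $\mathcal{T}_i(\mathcal{X}) \hookrightarrow \mathcal{T}_j(\mathcal{X})$ commutes only up to homotopy (a point the paper itself concedes later, in Section \ref{TPthm}), so Theorem \ref{hocohom} does not apply verbatim there. Your substitution of the strictly natural collapse retractions $r_i^{\mathcal{X}}:\mathcal{T}_i(\mathcal{X})\to X_i$ is precisely what repairs this, and you correctly identify it as the crux; the price is that the arrow reverses direction. Consequently your comparison is a span $\mathcal{X}\otimes_{\mathbf{g}}\mathcal{Y} \leftarrow \mathcal{T(X)}\otimes_{\mathbf{g}}\mathcal{T(Y)} \rightarrow \mathcal{T(X)}\otimes\mathcal{T(Y)}$ whose two legs both point out of the middle object, so---contrary to the wording of your final paragraph---they cannot literally be composed into a single natural map; what you prove is that the two filtrations are connected by a zig-zag of natural objectwise homotopy equivalences. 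This is harmless for every use the paper makes of the lemma: applying $H_n$ turns both legs into natural isomorphisms, which can be inverted and composed, so the corollary $PH_n(\mathcal{X}\otimes_{\mathbf{g}}\mathcal{Y};\F)\cong PH_n\big(\mathcal{T(X)}\otimes\mathcal{T(Y)};\F\big)$ and its role in Theorem \ref{thm:PersistentEZ} are unaffected, but you should state the conclusion as a zig-zag rather than a composite. Finally, your two ``routine'' facts are indeed fine: a product of closed cofibrations is a closed cofibration (factor $A\times B \hookrightarrow A\times Y \hookrightarrow X\times Y$ and use that each factor is a closed cofibration), a fact the paper's own appeal to Lemma \ref{projectionlemma} also needs and leaves implicit, and a finite colimit of closed inclusions is their union.
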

\begin{proof} We will show that there is a morphism $\mathcal{X}\otimes_\mathbf{g} \mathcal{Y} \longrightarrow \mathcal{T(X)}\otimes \mathcal{T(Y)}$,
so that each   map $\big(\mathcal{X}\otimes_\mathbf{g} \mathcal{Y}\big)_k \longrightarrow \big(\mathcal{T(X)}\otimes \mathcal{T(Y)}\big)_k$, $k\in \N$,
is a homotopy equivalence.
Indeed, since  $\mathcal{T}_j(\mathcal{X})$ deformation retracts onto $X_j$, then each inclusion $X_j \hookrightarrow \mathcal{T}_j(\mathcal{X})$
is a homotopy equivalence, and thus (by Theorem \ref{hocohom}) so is   the induced map
\[
\big(\mathcal{X}\otimes_\mathbf{g} \mathcal{Y}\big)_k
=
\mathsf{hocolim}\left(\mathcal{X}\boxtimes \mathcal{Y}\big|_{\triangle_k}\right)
\longrightarrow
\mathsf{hocolim}\left(\mathcal{T(X)}\boxtimes \mathcal{T(Y)}\big|_{\triangle_k}\right).
\]

Since each inclusion $\mathcal{T}_i(\mathcal{X}) \hookrightarrow \mathcal{T}_j(\mathcal{X})$ is a closed cofibration
(Remark \ref{telecofibration}), then the projection Lemma \ref{projectionlemma}
implies that the collapse map
\[
\mathsf{hocolim}\left(\mathcal{T(X)}\boxtimes \mathcal{T(Y)}\big|_{\triangle_k}\right)
\longrightarrow
\mathsf{colim}\left(\mathcal{T(X)}\boxtimes \mathcal{T(Y)}\big|_{\triangle_k}\right)
\]
is a homotopy equivalence.
Moreover, since all the maps in $\mathcal{T(X)}\boxtimes \mathcal{T(Y)}$ are inclusions,
and the colimit of such a diagram is essentially the union of the spaces,
then   there is a natural homeomorphism
\[
\mathsf{colim}\left(\mathcal{T(X)}\boxtimes \mathcal{T(Y)}\big|_{\triangle_k}\right)
\cong
\bigcup_{i + j \leq k} \mathcal{T}_i(\mathcal{X}) \times \mathcal{T}_j(\mathcal{Y})
=
\big(\mathcal{T(X)}\otimes \mathcal{T(Y)}\big)_k
\]
which completes the proof.
\end{proof}

It follows that,
\begin{corollary}
If $\mathcal{X}, \mathcal{Y} \in \mathbf{Top^N}$, then
$
PH_n(\mathcal{X}\otimes_\mathbf{g} \mathcal{Y}; \F)
\cong
PH_n\big(\mathcal{T(X)}\otimes  \mathcal{T(Y)}; \F)
$
as graded $\F[t]$-modules.
\end{corollary}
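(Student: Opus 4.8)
The plan is to transport the levelwise homotopy equivalence supplied by Lemma~\ref{lemma:tensorandgentensor} first through the homology functor and then through the Correspondence functor $P$ of Theorem~\ref{correspondence}. First I would invoke Lemma~\ref{lemma:tensorandgentensor} to obtain a morphism $\Phi : \mathcal{X}\otimes_\mathbf{g}\mathcal{Y} \longrightarrow \mathcal{T(X)}\otimes\mathcal{T(Y)}$ in $\mathbf{Top^N}$ whose component $\Phi_k$ is a homotopy equivalence for every $k\in\N$; this is exactly what the proof of that lemma produces.

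Next, applying singular homology $H_n(\,\cdot\,;\F)$ as in Example~\ref{PersMod} yields a morphism $H_n(\Phi;\F)$ of $\NN$-indexed diagrams in $\mathbf{Mod}_\F^\NN$. Since each $\Phi_k$ is a homotopy equivalence, each induced map $H_n(\Phi_k;\F)$ is an isomorphism of $\F$-vector spaces; because $H_n(\Phi;\F)$ is a natural transformation it commutes with the internal maps of the two diagrams, so it is an isomorphism in $\mathbf{Mod}_\F^\NN$, giving $H_n(\mathcal{X}\otimes_\mathbf{g}\mathcal{Y};\F)\cong H_n(\mathcal{T(X)}\otimes\mathcal{T(Y)};\F)$. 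Finally, I would apply $P:\mathbf{Mod}_\F^\NN\to\mathbf{gMod}_{\F[t]}$. Being a functor (indeed an equivalence of categories), $P$ carries this isomorphism to a graded $\F[t]$-module isomorphism
\[
PH_n(\mathcal{X}\otimes_\mathbf{g}\mathcal{Y};\F)=P\big(H_n(\mathcal{X}\otimes_\mathbf{g}\mathcal{Y};\F)\big)\cong P\big(H_n(\mathcal{T(X)}\otimes\mathcal{T(Y)};\F)\big)=PH_n(\mathcal{T(X)}\otimes\mathcal{T(Y)};\F),
\]
which is precisely the claim.

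There is essentially no difficult step here; the single point warranting care—and the only place the argument could fail—is checking that the isomorphisms $H_n(\Phi_k;\F)$ assemble into a graded $\F[t]$-morphism rather than a mere collection of vector-space isomorphisms. This is guaranteed by the \emph{naturality} of $\Phi$: since $\Phi$ is a morphism in $\mathbf{Top^N}$, the squares relating $\Phi_k$, $\Phi_{k+1}$ and the structure maps commute, so after applying $H_n$ and taking the direct sum, multiplication by $t$ is respected, which is exactly what the functor $P$ encodes.
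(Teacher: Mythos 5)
Your proposal is correct and follows exactly the route the paper intends: the corollary is stated as an immediate consequence of Lemma~\ref{lemma:tensorandgentensor}, and the implicit argument is precisely yours---the natural levelwise homotopy equivalence induces a natural isomorphism in $\mathbf{Mod}_\F^\NN$ after applying $H_n(\,\cdot\,;\F)$, which the equivalence $P$ of Theorem~\ref{correspondence} converts into a graded $\F[t]$-module isomorphism. Your closing remark about naturality being the one point that needs checking is also the right observation, since that is what makes the collection of vector-space isomorphisms into a morphism compatible with the $t$-action.
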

Later on---when studying the persistent singular chains of $\mathcal{T(X)}\otimes \mathcal{T(Y)}$---it will be useful
to have Proposition \ref{prop:defretract} below.
The setup is as follows:
observe that if  $\mathcal{ X } \in \mathbf{Top^N}$ and  $0 < \epsilon < 1$, then
\[
\mathcal{ T }^\epsilon_{j}\left( \mathcal{ X }\right)
:=
X_j \times [j, j + \epsilon)
\sqcup
\bigg(
\bigsqcup_{i < j} X_i \times [ i, i +1]
\bigg)
\Big/ (x, i+1) \sim (f_i(x), i+1)
\]
is an open neighborhood of $\mathcal{T}_j(\mathcal{X})$
in $\mathcal{T}_{j+1}(\mathcal{X})$,
which deformation retracts onto
$\mathcal{T}_j(\mathcal{X})$.
Indeed, any element  $  \mathbf{x} \in \mathcal{ T }^\epsilon_j\left(  \mathcal{ X }\right) $ can be written uniquely as
$\xx = (x,t)$ where either $x \in X_j$ and $t  \in [j, j + \epsilon)$,
or $x \in X_i$ for $i < j$ and $t \in [i,i+1)$.
In these coordinates
$\mathcal{T}_j(\mathcal{X}) = \{(x,t) \in \mathcal{T}_j^\epsilon(\mathcal{X}) : t \leq j\}$, and
a deformation retraction
$h^X_j: \mathcal{ T }^\epsilon_j\left(  \mathcal{ X }\right) \times [0,1] \rightarrow \mathcal{ T }_j^\epsilon\left(  \mathcal{ X }\right)$
can be  defined as
\begin{equation}\label{eq:defRetract}
h_j^X\big((x,t), \lambda \big) =
\left\{
  \begin{array}{cll}
    (x,t) & \hbox{if } & t \leq j \\[.5cm]
    \big(x, \lambda j + (1-\lambda) t\big) & \hbox{ if  } & t \geq j
  \end{array}
\right.
\end{equation}
Let $\mathcal{ T }^\epsilon\left(\mathcal{ X }\right)\in \mathbf{FTop} $
be
$  i \mapsto  \mathcal{ T }^\epsilon_i(\mathcal{ X })$
and $(i\leq j) \mapsto \left( \mathcal{T}^\epsilon_i (\mathcal{X}) \hookrightarrow  \mathcal{T}^\epsilon_j (\mathcal{X}) \right)$.
Then,
\begin{prop}\label{prop:defretract}
If $\mathcal{ X }, \mathcal{ Y } \in \mathbf{Top^N}$ and $k\in \N$, then
$\mathcal{T}^\epsilon_k\mathcal{(X)}$ formation retracts onto $\mathcal{T}_k\mathcal{(X)}$, and
$\big(\mathcal{ T }^\epsilon\left(  \mathcal{ X }\right) \otimes \mathcal{ T }^\epsilon\left( \mathcal{ Y } \right) \big)_k$
deformation retracts onto
$\big(\mathcal{ T }\left(\mathcal{ X }\right) \otimes \mathcal{ T }\left( \mathcal{ Y } \right)\big)_k $.
\end{prop}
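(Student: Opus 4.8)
The first claim is already contained in the construction preceding the statement: inspecting (\ref{eq:defRetract}), the homotopy $h_k^X$ is a deformation retraction of $\mathcal{T}^\epsilon_k(\mathcal{X})$ onto $\mathcal{T}_k(\mathcal{X})$ (and the same formula applied to $\mathcal{Y}$ handles that factor), so only the second claim requires work. Writing $U_k:=\big(\mathcal{T}^\epsilon(\mathcal{X})\otimes\mathcal{T}^\epsilon(\mathcal{Y})\big)_k=\bigcup_{i+j\le k}\mathcal{T}^\epsilon_i(\mathcal{X})\times\mathcal{T}^\epsilon_j(\mathcal{Y})$ and $V_k:=\big(\mathcal{T}(\mathcal{X})\otimes\mathcal{T}(\mathcal{Y})\big)_k=\bigcup_{i+j\le k}\mathcal{T}_i(\mathcal{X})\times\mathcal{T}_j(\mathcal{Y})$, I would record a point of $\mathcal{T}(\mathcal{X})$ by its time coordinate, i.e.\ as a pair $(x,s)$ with $x\in X_{\lfloor s\rfloor}$ subject to $(x,m+1)\sim(f_m(x),m+1)$, and similarly $(y,t)$ for $\mathcal{Y}$. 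In these coordinates $\mathcal{T}^\epsilon_i(\mathcal{X})=\{s<i+\epsilon\}$ and $\mathcal{T}_i(\mathcal{X})=\{s\le i\}$, so membership in $U_k$ and $V_k$ depends only on $(s,t)$. The only deformations that remain inside the telescopes are those fixing the fibre coordinates $x,y$ and decreasing the fractional parts $\xi:=s-\lfloor s\rfloor$ and $\eta:=t-\lfloor t\rfloor$ toward $0$, and the retraction will be of this form.

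The plan is to define one homotopy $H_\lambda$ acting only on $(\xi,\eta)$, built cell-by-cell over the squares $C_{m,p}=[m,m+1)\times[p,p+1)$ and governed by the diagonal budget $N:=k-m-p$. Over $C_{m,p}$ one computes $U_k=\{\mathbf{1}[\xi\ge\epsilon]+\mathbf{1}[\eta\ge\epsilon]\le N\}$ and $V_k=\{\mathbf{1}[\xi>0]+\mathbf{1}[\eta>0]\le N\}$, so three regimes arise: if $N\ge2$ the entire cell lies in $V_k$ and $H_\lambda$ is the identity; if $N\le0$ only the square $[0,\epsilon)^2$ is present and $H_\lambda$ is the straight-line contraction to the corner $(0,0)$; and if $N=1$ the region is the $L$-shaped set $\{\xi<\epsilon\}\cup\{\eta<\epsilon\}$, which I retract onto the two edges $\{\xi=0\}\cup\{\eta=0\}$ (pushing $\eta\to0$ where $\xi\ge\epsilon$, pushing $\xi\to0$ where $\eta\ge\epsilon$, and on the corner square $[0,\epsilon)^2$ projecting radially away from the far corner $(\epsilon,\epsilon)$), again realized by a straight-line homotopy to the projected point. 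The budget dependence is essential: the naive product $h^X_i\times h^Y_j$, i.e.\ pushing every fractional part fully to $0$, fails to fix $V_k$ because $V_k$ contains points with $\xi\in(0,\epsilon)$ whenever the budget permits, so the level-wise retractions do not assemble into a single homotopy of the union.

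The main obstacle is the continuity (and well-definedness) of $H_\lambda:U_k\times[0,1]\to U_k$ across the seams of this cellwise definition. There are two types. Across the telescope identifications at integer times, as $\xi\to1^-$ in $C_{m,p}$ the point $(x,s)$ approaches $(f_m(x),m+1)$, the $\xi'=0$ slice of $C_{m+1,p}$; since the budget drops by one across this face, one must check the two prescriptions agree, which they do because the formula on the approached face of the higher-budget cell restricts to the formula on the $\xi'=0$ face of the lower-budget cell (for instance the $N=1$ bottom-arm rule $(\xi,(1-\lambda)\eta)$ at $\xi=1$ equals the $N=0$ corner contraction $((1-\lambda)\xi',(1-\lambda)\eta)$ at $\xi'=0$). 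Across faces contained in $V_k$ the homotopy is forced to be the identity, matching the identity prescription of the adjacent $N\ge2$ cell. Granting these matchings (including the corner cells where both times cross integers), the pasting lemma yields a continuous $H$ with $H_0=\mathrm{id}$, $H_\lambda|_{V_k}=\mathrm{id}$, and $H_1(U_k)\subseteq V_k$, as required; verifying the finitely many face compatibilities is routine but is where essentially all the work lies. Alternatively, one can avoid explicit formulas by noting that $V_k\hookrightarrow U_k$ is a closed cofibration---by Remark \ref{telecofibration} together with the standard stability of cofibrations under products and unions---and a homotopy equivalence (apply Theorem \ref{hocohom} level-wise, or the Projection Lemma \ref{projectionlemma}), whence it is a strong deformation retract; I would nonetheless present the explicit homotopy, as it directly reuses the retraction (\ref{eq:defRetract}) already set up.
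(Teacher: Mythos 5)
Your main construction is correct and is essentially the paper's own proof: an explicit piecewise straight-line homotopy over a decomposition of $\big(\mathcal{T}^\epsilon(\mathcal{X})\otimes\mathcal{T}^\epsilon(\mathcal{Y})\big)_k$ into cells indexed by the integer parts of the two time coordinates and by which fractional parts lie in $[0,\epsilon)$, equal to the identity on $\big(\mathcal{T}(\mathcal{X})\otimes\mathcal{T}(\mathcal{Y})\big)_k$, with the first claim delegated to (\ref{eq:defRetract}) and continuity checked across the telescope seams; the paper's formulas differ only in detail from yours (it splits your corner square along its diagonal and uses a fractional-linear target that moves both coordinates, where you fix one coordinate on each arm and project radially from $(\epsilon,\epsilon)$). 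One caveat on your dispensable closing alternative: the inclusions $\mathcal{T}^\epsilon_i(\mathcal{X})\hookrightarrow\mathcal{T}^\epsilon_{i'}(\mathcal{X})$ are open rather than closed, so the Projection Lemma \ref{projectionlemma} does not apply to the $\mathcal{T}^\epsilon$ diagram, and unions of cofibrations need not be cofibrations, so that route would require real repair---but since you rest the proof on the explicit homotopy, this does not affect correctness.
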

\begin{proof}
Given $k\in \N$, our goal is to define a deformation retraction
	\begin{align*}
	H_k: \bigcup_{i+j = k} \mathcal{ T }^\epsilon_i\left(  \mathcal{ X }\right) \times \mathcal{ T }^\epsilon_j\left(  \mathcal{ Y }\right)  \times [0,1]
\longrightarrow
\bigcup_{i+j= k}  \mathcal{ T }^\epsilon_i\left(  \mathcal{ X }\right) \times \mathcal{ T }^\epsilon_j\left(  \mathcal{ Y }\right)
	\end{align*}

To this end, we subdivide $  \left(\mathcal{ T }^\epsilon\left(  \mathcal{ X }\right) \times \mathcal{ T }^\epsilon\left(  \mathcal{ Y }\right) \right)_k$
as follows: For $i+j = k$, let
\begin{align*}
	 A_k &=   \left(\mathcal{ T } \left(  \mathcal{ X }\right) \times \mathcal{ T }\left(  \mathcal{ Y }\right) \right)_k\\
	 B_{i,j} &= X_i \times [i, i+\epsilon) \times Y_j \times [j, j+\epsilon)\\
	 C_{i,j-1} &= X_i \times (i,i+ \epsilon) \times Y_{j-1} \times [j-1+\epsilon, j)  \\
	 D_{i,j-1} &= X_i \times [i+\epsilon, i+1) \times Y_{j-1} \times (j-1, j-1+\epsilon) \\
	 E_{i,j-1} &= X_i \times (i, i + \epsilon) \times Y_{j-1} \times (j-1, j-1 +\epsilon)
\end{align*}
We further write $E_{i,j-1} = E^+_{i,j-1} \cup E^-_{i,j-1}$
where
$\big((x,t), (y,s)\big) \in  E_{i,j-1}^+$  (sim. $E^-_{i,j-1}$)  if and only if $t-i \geq s+ 1 -j$ (sim. $t-i \leq  s+ 1 -j$).
Figure \ref{fig:  defretract} below visually represents the subsets defined above for $k=3$.
\begin{figure}[!htb]
		\includegraphics[width=5cm]{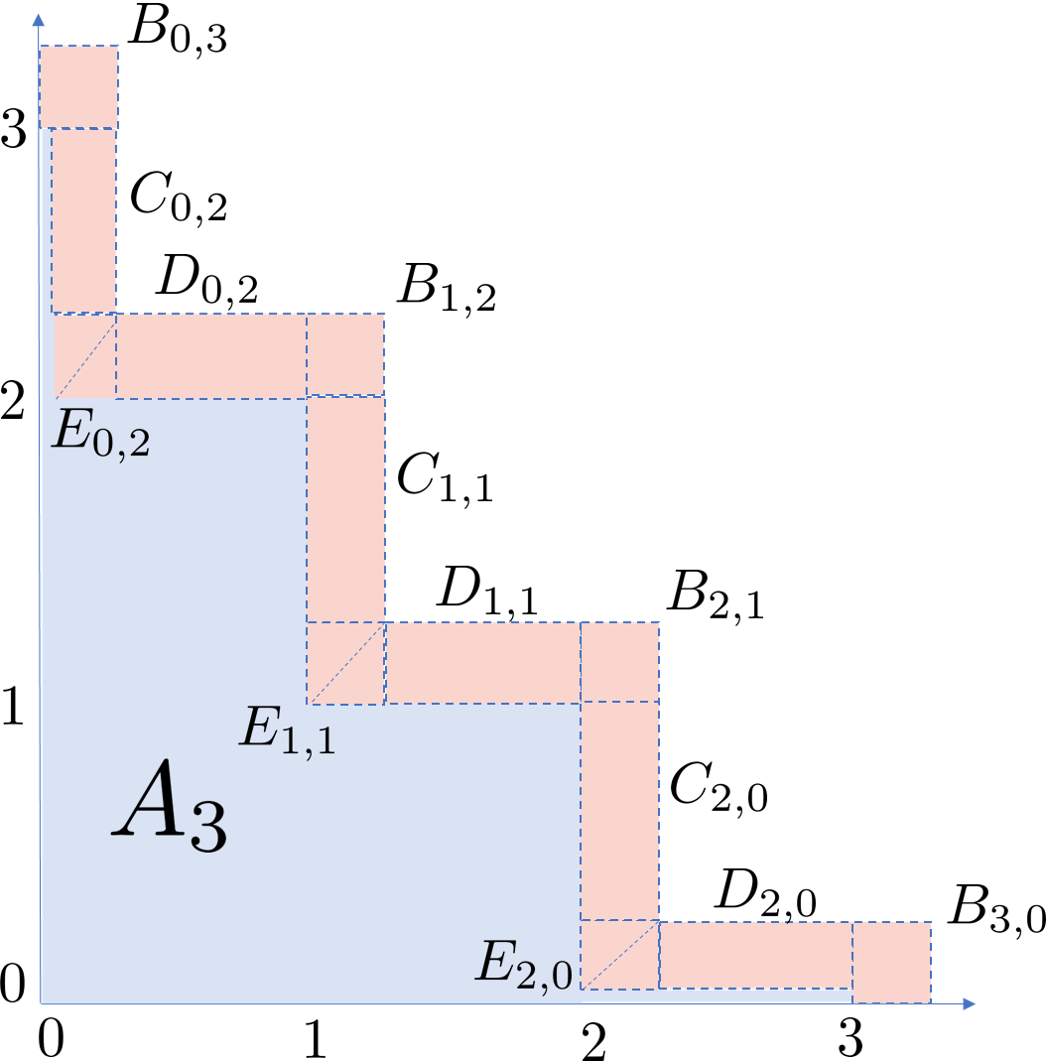}
		\caption{}
		\label{fig:  defretract}
\end{figure}

Let us now define the map $H_k$.
Let  $H_k : A_k \times [0,1] \longrightarrow A_k$ be  the projection onto the first coordinate.
On each $B_{i,j} \times [0,1]$ we  define
\[
H_k\big((\xx, \yy),\lambda\big)
=
\big(h_i^X(\xx , \lambda), h_j^Y(\yy, \lambda)\big)
\]
where $h_i^X , h_j^Y$ are given by (\ref{eq:defRetract}).
For elements in $\left(C_{i,j-1} \cup E^+_{i,j-1} \right) \times [0,1]$, $H_k$ takes
$
\big(\xx,\yy, \lambda \big)=
\big(
(x,t), (y,s), \lambda
\big)
$
to
\[
\left(
h_i^X(\xx,\lambda),
\left(
y, (1-\lambda)s + \lambda \left(j - \frac{j-s}{1+i-t}\right)
\right)
\right)
\]
	And finally, for elements in $\left(D_{i,j-1} \cup E^-_{i,j-1}\right) \times [0,1]$, $H_k$ takes $(\xx,\yy,\lambda) $ to
\[
\left(
\left(
x, (1-\lambda)t + \lambda\left(i - \frac{1+ i - t}{j-s}\right)
\right),
h_{j-1}^Y(\yy,\lambda)
\right)
\]

	Continuity readily follows by construction. Furthermore, the marginal function $H_k(-, 0)$ is the identity everywhere and $H_k( - ,1 ) \in A_k$. This finishes the proof.
\end{proof}

\subsection{A comparison theorem}\label{comparison}
Next we show that the persistent homology of the
categorical and generalized tensor products are interleaved in the log scale.
While a full characterization of   stability for each product filtration is beyond the scope of this paper,
this section showcases some of the techniques one can employ when addressing this question.

\begin{definition}
For $\delta \in \mathbb{R}$, the $\delta$-\textit{shift functor} $T_{\delta}: \mathbf{C}^\RR \rightarrow \mathbf{C}^\RR$
is defined as follows:
For $\mathcal{ M } \in\mathbf{C}^\RR $, let $T_\delta(\mathcal{ M })\in \mathbf{C^R}$
be the functor sending $r\in \mathbb{R}$ to $ \mathcal{ M }(r + \delta)$,
and   $r\leq r' \in \mathbb{R}$ to $ \mathcal{ M }(r + \delta \leq r' + \delta)$. For a morphism $\varphi: \mathcal{ M } \rightarrow \mathcal{ N }$, we get a morphism $T_\delta(\varphi): T_\delta(\mathcal{ M }) \rightarrow T_\delta(\mathcal{ N })$, and  if $\delta \geq 0$, then there is a morphism $T_{\delta}^\mathcal{ M }: \mathcal{ M } \rightarrow T_\delta(\mathcal{ M })$ satisfying
	\begin{align*}
	\left(T_{\delta}^\mathcal{ M }\right)_r = \mathcal{ M }\left( r \leq  r + \delta \right)
	\end{align*}
	for all $r \in \mathbb{R}$. We call $T_\delta^\mathcal{ M }$ the $\delta$-\textit{transition morphism}.\\
\end{definition}
The notion of interleavings,  first introduced in \cite{proximity},
allows one to compare objects in $\mathbf{Mod}_{\mathbb{F}}^\RR$.
Specifically:
\begin{definition}
Let $\mathcal{ M }, \mathcal{ N } \in \mathbf{C}^\RR$ and $\delta \geq 0$.
A $\delta$\textit{-interleaving} between $\mathcal{ M }$ and $\mathcal{ N }$
consists of
 morphisms $\varphi: \mathcal{ M } \rightarrow T_\delta(\mathcal{ N })$ and $\psi: \mathcal{ N } \rightarrow T_\delta(\mathcal{ M })$ so that $T_\delta(\varphi) \circ \psi = T_{2\delta}^\mathcal{ N }$ and $T_\delta(\psi) \circ \varphi = T_{2\delta}^\mathcal{ M }$. We define the \textit{interleaving distance} $d_I$ between $\mathcal{ M }$ and $\mathcal{ N }$ as:
	\begin{align*}
	d_I\left( \mathcal{ M }, \mathcal{ N } \right) := \inf\big\{ \delta \ \vert \; \mathcal{ M } \textrm{ and } \mathcal{N} \textrm{ are } \delta \textrm{-interleaved } \big\}.
	\end{align*}
	If there are no interleavings between $\mathcal{ M }$ and $\mathcal{ N }$, we say that $d_I(\mathcal{ M },\mathcal{ N })= \infty$.
\end{definition}

Let $\RR_\ast$ denote the poset category associated to the set of positive reals, $\mathbb{R}_\ast$, and
define the logarithm functor $\mathbf{ln}: \mathbf{Top^{R_\ast}} \rightarrow \mathbf{Top^{R}}$ as
\begin{align*}
\mathbf{ln}(\mathcal{ X })(r) &= \mathcal{X}(e^r)\\
\mathbf{ln}(\mathcal{ X })(r \leq r') &= \mathcal{ X }(e^r \leq e^{r'}).
\end{align*}
Then,
\begin{theorem}\label{thm:comparisonrstar}
If $\mathcal{ X }, \mathcal{ Y } \in \mathbf{Top^{R_\ast}}$, then
\[
d_I
\Big(
H_n\big(\mathbf{ln}( \mathcal{ X } \times \mathcal{ Y } ); \F\big)
\, ,\,
H_n\big(\mathbf{ln}( \mathcal{ X } \otimes_\mathbf{g} \mathcal{ Y }) ;\F\big)
\Big)
\;\leq\; \ln(2) .
\]
\end{theorem}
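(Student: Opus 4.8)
The plan is to realize both persistence modules as the $H_n$ of homotopy colimits over nested sub-posets of $\mathbb{R}_*^2$, and then to obtain the two interleaving maps as change-of-indexing-category maps, so that the interleaving identities become an instance of the functoriality $\phi_{g\circ f}=\phi_g\circ\phi_f$ of Proposition \ref{prop:hocolimitcommute}. For $r\in\mathbb{R}_*$, alongside the triangle $\triangle_r=\{(s,t)\in\mathbb{R}_*^2:s+t\leq r\}$ of Definition \ref{definition: TPF}, I would introduce the square $\square_r=\{(s,t)\in\mathbb{R}_*^2:s\leq r,\ t\leq r\}$. Writing $u\in\mathbb{R}$ for the log-scale parameter (so the underlying value is $e^u$), one has by definition $\mathbf{ln}(\mathcal{X}\otimes_{\mathbf{g}}\mathcal{Y})(u)=\mathsf{hocolim}(\mathcal{X}\boxtimes\mathcal{Y}|_{\triangle_{e^u}})$ and $\mathbf{ln}(\mathcal{X}\times\mathcal{Y})(u)=X_{e^u}\times Y_{e^u}$.

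First I would rewrite the categorical side as a homotopy colimit. The poset $\square_{e^u}$ has terminal object $(e^u,e^u)$, on which $\mathcal{X}\boxtimes\mathcal{Y}$ takes the value $X_{e^u}\times Y_{e^u}$; hence by Theorem \ref{thm: hocoterminalobject} the collapse map $\mathsf{hocolim}(\mathcal{X}\boxtimes\mathcal{Y}|_{\square_{e^u}})\to X_{e^u}\times Y_{e^u}$ is a weak homotopy equivalence, and after applying $H_n(-;\F)$ it identifies $H_n(\mathbf{ln}(\mathcal{X}\times\mathcal{Y}))(u)$ with $H_n\big(\mathsf{hocolim}(\mathcal{X}\boxtimes\mathcal{Y}|_{\square_{e^u}})\big)$. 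Next I would record the chain of poset inclusions, valid because all coordinates are positive: $\triangle_{e^u}\subseteq\square_{e^u}\subseteq\triangle_{2e^u}\subseteq\square_{2e^u}\subseteq\cdots$, where $s+t\leq e^u$ forces $s,t\leq e^u$, and $s,t\leq e^u$ forces $s+t\leq 2e^u=e^{u+\ln 2}$. Passing to homotopy colimits and then to $H_n$, the inclusion $\square_{e^u}\subseteq\triangle_{2e^u}$ induces $\varphi_u:H_n(\mathbf{ln}(\mathcal{X}\times\mathcal{Y}))(u)\to H_n(\mathbf{ln}(\mathcal{X}\otimes_{\mathbf{g}}\mathcal{Y}))(u+\ln 2)$, while $\triangle_{e^u}\subseteq\square_{2e^u}$ induces $\psi_u$ in the opposite direction with the same shift $\ln 2$; their naturality in $u$, hence that they are morphisms of $\mathbb{R}$-indexed modules, follows from functoriality of change of indexing together with naturality of the collapse.

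The two interleaving identities then reduce to Proposition \ref{prop:hocolimitcommute}. Indeed, $T_{\ln 2}(\psi)\circ\varphi$ at parameter $u$ is induced by the composite inclusion $\square_{e^u}\subseteq\triangle_{2e^u}\subseteq\square_{4e^u}$, which is just $\square_{e^u}\subseteq\square_{4e^u}$; under the collapse identification this is exactly the transition morphism $T_{2\ln 2}$ of $H_n(\mathbf{ln}(\mathcal{X}\times\mathcal{Y}))$. Symmetrically, $T_{\ln 2}(\varphi)\circ\psi$ is induced by $\triangle_{e^u}\subseteq\square_{2e^u}\subseteq\triangle_{4e^u}$, i.e. $\triangle_{e^u}\subseteq\triangle_{4e^u}$, which is the transition morphism $T_{2\ln 2}$ of $H_n(\mathbf{ln}(\mathcal{X}\otimes_{\mathbf{g}}\mathcal{Y}))$. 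This exhibits a $\ln 2$-interleaving and yields $d_I\leq\ln(2)$.

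The step I expect to be the main obstacle is the naturality of the terminal-object collapse with respect to the square inclusions $\square_{e^u}\subseteq\square_{e^{u'}}$, namely that the collapse is compatible with the structure map $X_{e^u}\times Y_{e^u}\to X_{e^{u'}}\times Y_{e^{u'}}$; this compatibility is what lets me transport the clean hocolim-level identities to the categorical persistence module. I would dispatch it by identifying the collapse with the canonical natural transformation $\mathsf{hocolim}\Rightarrow\mathsf{colim}$ and using that the colimit of a diagram with a terminal object is computed by its value there, functorially in the indexing poset, so that on colimits the inclusion $\square_{e^u}\subseteq\square_{e^{u'}}$ recovers precisely the structure map above.
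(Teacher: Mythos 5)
Your proposal is correct and follows essentially the same route as the paper's own proof: the paper likewise introduces the squares $\Box_r=\{(i,j)\in\mathbf{R}_*^2:\max\{i,j\}\leq r\}$, uses the inclusions $\Box_r\hookrightarrow\triangle_{2r}\hookrightarrow\Box_{4r}\hookrightarrow\triangle_{8r}$ together with Proposition \ref{prop:hocolimitcommute} to obtain the two commuting interleaving triangles, and invokes Theorem \ref{thm: hocoterminalobject} (plus the fact that weak equivalences induce homology isomorphisms) to identify $\mathsf{hocolim}(\mathcal{X}\boxtimes\mathcal{Y}|_{\Box_r})$ with $X_r\times Y_r$, yielding the $\ln(2)$-interleaving after applying $\mathbf{ln}$ and $H_n(\;\cdot\;;\F)$. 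The naturality of the terminal-object collapse, which you flag as the main obstacle and resolve via the comparison $\mathsf{hocolim}\Rightarrow\mathsf{colim}$, is left implicit in the paper but is handled correctly in your argument.
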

\begin{proof}
For each $r \in \mathbb{R}_\ast$,
let $\Box_r = \{(i,j) \in \mathbf{R}_*^2 :  \max\{i,j\} \leq r\}$.
We have inclusions $\triangle_{r } \hookrightarrow \Box_r \hookrightarrow \Box_{2r}$ and $\Box_r \hookrightarrow \triangle_{2r}$, and thus by Proposition \ref{prop:hocolimitcommute}, the triangles in the diagram
	\begin{equation}\label{eq:triangleCommute}
	\begin{tikzcd}[column sep= normal]
	\mathsf{hocolim}(\mathcal{ X } \boxtimes\mathcal{ Y } \vert_{\triangle_{2r }}) \ar[r] \ar[dr] & \mathsf{hocolim}(\mathcal{ X } \boxtimes\mathcal{ Y } \vert_{\triangle_{8r }}) \\
	\mathsf{hocolim}(\mathcal{ X } \boxtimes\mathcal{ Y } \vert_{\Box_r}) \ar[r] \ar[u]  & \mathsf{hocolim}(\mathcal{ X } \boxtimes\mathcal{ Y } \vert_{\Box_{4r}}) \ar[u]
	\end{tikzcd}
	\end{equation}
	commute.
Since $\Box_r$ has a terminal object, namely $(r,r)$, then the
natural map
\[
\mathsf{hocolim}(\mathcal{X}\boxtimes \mathcal{Y}\big|_{\Box_r})
\longrightarrow X_r \times Y_r
\]
is a weak homotopy equivalence (Theorem \ref{thm: hocoterminalobject}).
Such maps induce isomorphisms at the level of homology
\cite[Theorem 4.21]{hatcher},
and thus applying $\mathbf{ln}$ followed by
$H_n( \;\cdot \; ; \F)$ turns (\ref{eq:triangleCommute}) into
a $\ln(2)$-interleaving of the desired objects
in $\mathbf{Mod}_\F^\mathbf{R}$.
\end{proof}

If $\mathcal{M,N} \in \mathbf{Mod}_\F^\mathbf{R}$ are pointwise finite,
then---in addition to  their interleaving distance
$d_I(\mathcal{M,N})$---one can  compute the \emph{bottleneck distance}
$d_B$ between $\bcd(\mathcal{M})$ and $\bcd(\mathcal{N})$
as follows.
A matching between two multisets $M$ and $N$ is a multiset bijection
$\varphi: S_M \longrightarrow S_N$ between some  $S_M \subset M$ and
$S_N \subset N$.
Given $\varphi$, we say that
$I \in S_M$ and  $\varphi(I) \in S_N$ are matched,
and all other elements of $(M\smallsetminus S_M) \cup (N \smallsetminus S_N)$
are called unmatched.
Let $\delta > 0$.
A matching between $\bcd(\mathcal{M})$ and $\bcd(\mathcal{N})$
is called a $\delta$-matching if the following conditions hold:
\begin{enumerate}
  \item If $I$ and $J$ are matched, then
    \[\max\{|\ell_I - \ell_J|, |\rho_I - \rho_J|\} \leq \delta.\] Recall that $\ell_I$ and $\rho_I$ are, respectively, the
    left and right endpoints of the interval $I$.
  \item If $I$ is unmatched, then $|\ell_I - \rho_I| \leq 2\delta$.
\end{enumerate}

The bottleneck distance $d_B$ between $\bcd(\mathcal{M})$
and $\bcd(\mathcal{N})$ is defined as:
\[
d_B \big(\bcd(\mathcal{M}), \bcd(\mathcal{N})\big)
:=
\inf
\big\{
\delta \;|\;
\bcd(\mathcal{M}) \mbox{ and } \bcd(\mathcal{N})
\mbox{ are $\delta$-matched }
\big\}
\]
If no $\delta$-matchings exist, then
the bottleneck distance is $\infty$.

The \emph{Isometry Theorem} \cite{matchings}
implies  that  if $\mathcal{ M },\mathcal{ N } \in \mathbf{Mod}_{\mathbb{F}}^\RR $ are pointwise finite, then \[
d_I(\mathcal{ M }, \mathcal{ N })
=
d_B\left(\mathsf{bcd}(\mathcal{ M }), \mathsf{bcd}(\mathcal{ N })\right).
\]
The following corollary is a direct consequence
of the above equality and Theorem \ref{thm:comparisonrstar}.

\begin{corollary}\label{cor:comparisonbarcode}
Let $\mathcal{ X }, \mathcal{ Y } \in \mathbf{Top^\mathbf{R_*}}$
be so that $\mathcal{X}\times \mathcal{Y}$
and $\mathcal{X}\otimes_\mathbf{g} \mathcal{Y}$
are pointwise finite.
Then
	\begin{enumerate}
		\item If under a matching  realizing
    $d_B\big(\bcd_n(\mathcal{X}\times \mathcal{Y}),
\bcd_n(\mathcal{X}\otimes_\mathbf{g} \mathcal{Y})\big)$---which exists
by \cite[Theorem 5.12]{chazal2016structure}---we have that $I$ is matched to $J$, then
		\begin{align*}
		\frac{1}{2} \leq \frac{\ell_I}{\ell_J } , \frac{\rho_I }{\rho_J } \leq 2.
		\end{align*}
		\item If  $I$ is unmatched, then $\frac{\rho_I}{\ell_I } \leq 4.$
	\end{enumerate}
\end{corollary}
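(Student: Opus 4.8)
The plan is to deduce the corollary directly from the $\ln(2)$-interleaving of Theorem \ref{thm:comparisonrstar}, passing through the Isometry Theorem and then simply exponentiating the resulting matching conditions. The one structural fact I would isolate first is that applying $\mathbf{ln}$ to a pointwise finite module transforms its barcode by the endpoint map $[\ell,\rho)\mapsto[\ln\ell,\ln\rho)$: since $\mathbf{ln}(\mathcal{M})(r)=\mathcal{M}(e^r)$, a bar is alive at $r$ in the log scale exactly when it is alive at $e^r$ in the original scale. Consequently $\bcd_n(\mathbf{ln}(\mathcal{X}\times\mathcal{Y}))$ and $\bcd_n(\mathbf{ln}(\mathcal{X}\otimes_\mathbf{g}\mathcal{Y}))$ are obtained from $\bcd_n(\mathcal{X}\times\mathcal{Y})$ and $\bcd_n(\mathcal{X}\otimes_\mathbf{g}\mathcal{Y})$ by applying $\ln$ to every endpoint, and the correspondence $[\ell,\rho)\leftrightarrow[\ln\ell,\ln\rho)$ is a bijection of the two barcodes that identifies matchings with matchings.

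Next I would combine Theorem \ref{thm:comparisonrstar} with the Isometry Theorem \cite{matchings}. The former provides a $\ln(2)$-interleaving of $H_n(\mathbf{ln}(\mathcal{X}\times\mathcal{Y});\F)$ and $H_n(\mathbf{ln}(\mathcal{X}\otimes_\mathbf{g}\mathcal{Y});\F)$---both pointwise finite, since $\mathbf{ln}$ only reindexes---so the latter yields $d_B\leq\ln(2)$ between their (log-scale) barcodes. By \cite[Theorem 5.12]{chazal2016structure} this infimum is attained, giving an actual $\ln(2)$-matching of the log-scale barcodes; transporting it back along the bijection above produces the matching of $\bcd_n(\mathcal{X}\times\mathcal{Y})$ and $\bcd_n(\mathcal{X}\otimes_\mathbf{g}\mathcal{Y})$ named in the statement. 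The point to make explicit is that the relevant bottleneck distance is the one measured after applying $\mathbf{ln}$, equivalently the multiplicative bottleneck distance on the original barcodes; it is this log-scale metric, rather than the naive additive one, that controls the ratios appearing in the corollary.

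Finally I would exponentiate the two defining inequalities of a $\ln(2)$-matching. For a matched pair the condition $\max\{|\ln\ell_I-\ln\ell_J|,\,|\ln\rho_I-\ln\rho_J|\}\leq\ln 2$ is exactly $|\ln(\ell_I/\ell_J)|\leq\ln 2$ and $|\ln(\rho_I/\rho_J)|\leq\ln 2$, i.e. $\tfrac12\leq \ell_I/\ell_J,\ \rho_I/\rho_J\leq 2$, which is (1); for an unmatched interval the condition $|\ln\ell_I-\ln\rho_I|\leq 2\ln 2=\ln 4$ is exactly $\rho_I/\ell_I\leq 4$, which is (2). The logarithms are defined because all endpoints lie in $\mathbb{R}_\ast$. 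I expect the only genuine care to be the bookkeeping for infinite right endpoints: an interval with $\rho=\infty$ cannot be left unmatched, since condition (2) would force $\ln\infty-\ln\ell_I\leq\ln 4$; hence infinite bars are matched to infinite bars and the ratio $\rho_I/\rho_J$ is read as $1$, consistent with (1). No step is hard once Theorem \ref{thm:comparisonrstar} and the Isometry Theorem are in hand---the entire content lies in recognizing that the log functor converts the additive interleaving bound into the stated multiplicative one.
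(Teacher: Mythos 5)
Your proposal is correct and follows exactly the route the paper intends: the paper gives no explicit proof, stating the corollary as a direct consequence of Theorem \ref{thm:comparisonrstar} together with the Isometry Theorem, and your argument is precisely the fleshing-out of that one-line justification (log functor sends barcode endpoints through $\ln$, the $\ln(2)$-interleaving becomes a $\ln(2)$-matching via \cite{matchings} and \cite[Theorem 5.12]{chazal2016structure}, and exponentiating the matching conditions yields the ratio bounds). Your explicit remarks---that the relevant matching is the one optimal for the log-scale (multiplicative) bottleneck distance rather than the naive additive one, and that infinite bars must be matched to infinite bars---are careful touches the paper leaves implicit.
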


\section{A Persistent K\"{u}nneth formula for the categorical product} \label{Dthm}
Let $\mathbf{I}$ be a small indexing category and let $\mathbf{S}$ be  any of the categories of spaces
 from Section \ref{filtrations} (that is $\mathbf{S}$ = $\mathbf{Top}$, $\mathbf{Met}$, $\mathbf{Simp}$, or $\mathbf{oSimp}$).
In this section we relate the rank invariants of two objects in $\mathbf{S^I}$ to that of their categorical product,
and  prove the persistent K\"{u}nneth formula for the categorical product  in $\mathbf{S^P}$,
where $\mathbf{P}$ is the poset category of a separable totally ordered poset $\mathcal{P}$.
\begin{remark}\label{rmk:spaceskunneth}
	Recall that the topological K\"{u}nneth formula (Corollary \ref{classickunneth}) holds for objects $X, Y\in \mathbf{Top}$ and singular homology.
Every $ (X, d_X) \in \mathbf{Met}$ can be viewed as a topological space via the metric topology,
and since the maximum metric induces the product topology,
then
the K\"{u}nneth formula holds for  $ \mathbf{Met}$.
Let $K, L \in \mathbf{oSimp}$, and let $\vert \cdot \vert : \textbf{oSimp} \rightarrow \textbf{Top}$ be the geometric realization functor.
From the homeomorphism $ \vert K \oslash L\vert \rightarrow \vert K \vert \times \vert L \vert$ and the natural isomorphism
$H_n(|K| ;R) \cong H_n(K;R)$
between singular and simplicial homology, the formula holds in $ \mathbf{oSimp}$. Using (2) of lemma \ref{osimpandsimp}, the formula also holds for $ \mathbf{Simp}$.
\end{remark}
Let $\mathbb{F}$ be a field and let $\mathcal{ X }$ and $\mathcal{ Y }$ be objects in $\mathbf{S^I}$. Using the topological K\"{u}nneth formula   and the observation that $H_n(X_r ; \mathbb{F})$ and $H_n(Y_r ; \mathbb{F})$ are vector spaces for each $n \in \mathbb{N}$, $r \in \mathbf{I}$, we obtain  isomorphisms
\begin{align*}
\bigoplus_{i+j = n} H_i(X_r ; \F) \otimes_{\F} H_j(Y_r; \F) \xrightarrow{\;\;\cong\;\;} H_n(X_r \times Y_r; \F).
\end{align*}
Using  naturality, this yields an isomorphism in $\mathbf{Mod^I_\mathbb{F}}$ between $H_n( \mathcal{ X } \times \mathcal{ Y }; \F)$ and $\mathcal{ M }_n$, where
\begin{equation}\label{eq:defModuleM}
\mathcal{ M }_n(r) := \bigoplus_{i+j = n} H_i(X_r ; \F) \otimes_{\mathbb{F}} H_j(Y_r; \F)
\end{equation}
and the homomorphism $\mathcal{ M }_n(r \rightarrow r') $
is the sum of the ones induced between tensor products
by the maps $X_r \rightarrow X_{r'}$ and $Y_r \rightarrow Y_{r'}$.
In other words, and using the notation of tensor product and direct sum
of objects in $\mathbf{Mod}_\F^\II$ (see Definition \ref{def:SumIrredTensor}),
we obtain the following:
\begin{lemma}\label{lemma:KunnethCategorical}
 If $\mathcal{X,Y} \in \mathbf{S^I}$, then for every $n\in \N$ and every field
$\F$
\[
H_n(\mathcal{X\times Y}; \F) \cong \bigoplus_{i+j =n}
H_i(\mathcal{X};\F) \otimes H_j(\mathcal{Y};\F)
\] in $\mathbf{Mod}^\II_\F$.
\end{lemma}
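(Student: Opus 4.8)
The plan is to apply the topological K\"unneth theorem pointwise and then promote the resulting pointwise isomorphisms to an isomorphism of $\mathbf{I}$-indexed diagrams, using naturality together with the vanishing of $\mathsf{Tor}$ over a field.

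First I would fix $n\in \N$ and appeal to Remark \ref{rmk:spaceskunneth}, which guarantees that the topological K\"unneth formula (Corollary \ref{classickunneth}) with coefficients in the field $\F$ is available in each of the cases $\mathbf{S} = \mathbf{Top}, \mathbf{Met}, \mathbf{Simp}, \mathbf{oSimp}$. For each object $r\in \mathbf{I}$, Corollary \ref{classickunneth} then yields a natural short exact sequence relating $H_n(X_r \times Y_r;\F)$ to $\bigoplus_{i+j=n} H_i(X_r;\F)\otimes_\F H_j(Y_r;\F)$ and a sum of $\mathsf{Tor}_\F$ terms.

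The next step is to kill the Tor term. Since $\F$ is a field, every $\F$-module is free, hence flat, so $\mathsf{Tor}_\F(-,-)=0$. The short exact sequence therefore degenerates, and the left-hand map
\[
\bigoplus_{i+j=n} H_i(X_r;\F)\otimes_\F H_j(Y_r;\F) \xrightarrow{\ \cong\ } H_n(X_r\times Y_r;\F)
\]
is an isomorphism for every $r$, recovering the pointwise isomorphism noted just above in \eqref{eq:defModuleM}.

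The main point, and the only place requiring care, is naturality. I would stress that we do \emph{not} use the splitting in Corollary \ref{classickunneth} (which is not natural); instead, it is the left map of the K\"unneth sequence that serves as the isomorphism, and that map is natural in both arguments by the statement of the theorem. Consequently, for every morphism $r\to r'$ in $\mathbf{I}$ the square comparing the induced map $H_n(X_r\times Y_r;\F)\to H_n(X_{r'}\times Y_{r'};\F)$ with the map $\mathcal{M}_n(r\to r')$ commutes, so the pointwise isomorphisms assemble into a natural isomorphism of functors $\mathbf{I}\to\mathbf{Mod}_\F$, i.e.\ an isomorphism in $\mathbf{Mod}^\mathbf{I}_\F$. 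Finally, unwinding the definitions of $\oplus$ and $\otimes$ in $\mathbf{Mod}^\mathbf{I}_\F$ (Definition \ref{def:SumIrredTensor}) identifies the target diagram $\mathcal{M}_n$ with $\bigoplus_{i+j=n} H_i(\mathcal{X};\F)\otimes H_j(\mathcal{Y};\F)$, which completes the argument. I expect no serious obstacle: the field hypothesis removes the Tor obstruction, and the theorem supplies naturality of the surviving map for free.
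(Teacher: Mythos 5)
Your proposal is correct and follows essentially the same route as the paper: apply the topological K\"unneth formula pointwise (valid in each category $\mathbf{S}$ by Remark \ref{rmk:spaceskunneth}), note that over a field the $\mathsf{Tor}$ terms vanish so the natural map from the tensor-product sum is an isomorphism, and use naturality of that map (not the non-natural splitting) to assemble the pointwise isomorphisms into an isomorphism in $\mathbf{Mod}_\F^\II$. Your explicit remark that one must avoid relying on the splitting is exactly the point the paper's terser phrase ``using naturality'' is meant to capture.
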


An immediate consequence is the following calculation at the level
of rank invariants (see Definition \ref{def:RankInvariant}):
\begin{prop}\label{prop:rank}
Let $\mathcal{X,Y} \in \mathbf{S^I}$ and let
$\rho^{\mathcal{ X }}$, $\rho^{\mathcal{ Y }}$ be their rank invariants. Then
	\begin{align*}
	\rho_n^{\mathcal{ X} \times \mathcal{ Y }}(r\rightarrow r') = \sum_{i+j = n} \rho_i^{\mathcal{ X}}(r\rightarrow r') \cdot \rho_j^{\mathcal{ Y}}(r \rightarrow r')
	\end{align*}
for every $r\rightarrow r'$ in $\II$,	with the convention that $\infty \cdot 0 = 0 = 0 \cdot \infty$.
\end{prop}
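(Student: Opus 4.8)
The plan is to read the formula directly off Lemma \ref{lemma:KunnethCategorical}, using two elementary facts about ranks of linear maps over a field: ranks are additive under direct sums and multiplicative under tensor products. First I would invoke Lemma \ref{lemma:KunnethCategorical} to obtain a natural isomorphism $H_n(\mathcal{X}\times\mathcal{Y};\F)\cong\bigoplus_{i+j=n}H_i(\mathcal{X};\F)\otimes H_j(\mathcal{Y};\F)$ in $\mathbf{Mod}_\F^\II$. Since the rank invariant depends only on the isomorphism type of an object in $\mathbf{Mod}_\F^\II$ (Remark \ref{rmk:rankandbarcode}), it suffices to compute the rank invariant of the right-hand side.

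Next, fix a morphism $r\rightarrow r'$ in $\II$ and write $\phi_i:=H_i(\mathcal{X};\F)(r\rightarrow r')$ and $\psi_j:=H_j(\mathcal{Y};\F)(r\rightarrow r')$. By Definition \ref{def:SumIrredTensor}, the map that the direct sum assigns to $r\rightarrow r'$ is precisely $\bigoplus_{i+j=n}(\phi_i\otimes\psi_j)$. Because the rank of a direct sum of linear maps is the sum of the ranks of the summands, its rank equals $\sum_{i+j=n}\mathsf{rank}(\phi_i\otimes\psi_j)$, so the entire claim reduces to the single identity $\mathsf{rank}(\phi_i\otimes\psi_j)=\mathsf{rank}(\phi_i)\cdot\mathsf{rank}(\psi_j)$.

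To establish that identity I would use $\im(\phi_i\otimes\psi_j)=\im(\phi_i)\otimes_\F\im(\psi_j)$, which is valid over a field: choosing bases of $\im(\phi_i)$ and $\im(\psi_j)$, their pairwise tensors form a basis of the image of $\phi_i\otimes\psi_j$, whence $\dim_\F\im(\phi_i\otimes\psi_j)=\dim_\F\im(\phi_i)\cdot\dim_\F\im(\psi_j)$. Collecting these computations over all $i+j=n$ then yields exactly $\rho_n^{\mathcal{X}\times\mathcal{Y}}(r\rightarrow r')=\sum_{i+j=n}\rho_i^{\mathcal{X}}(r\rightarrow r')\cdot\rho_j^{\mathcal{Y}}(r\rightarrow r')$.

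The only subtlety---and where the stated convention $\infty\cdot 0=0=0\cdot\infty$ enters---is the case of infinite ranks. If one of $\mathsf{rank}(\phi_i)$, $\mathsf{rank}(\psi_j)$ is zero, then $\phi_i\otimes\psi_j=0$ and both sides vanish; if both are nonzero but at least one is infinite, then $\im(\phi_i)\otimes_\F\im(\psi_j)$ is infinite-dimensional and both sides equal $\infty$. I expect this short case analysis, together with the image-of-a-tensor-product identity, to be the main (though entirely routine) point; everything else is an immediate transcription of the isomorphism in Lemma \ref{lemma:KunnethCategorical}.
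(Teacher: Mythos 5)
Your proof is correct and follows essentially the same route as the paper's: both reduce via Lemma \ref{lemma:KunnethCategorical} to computing the rank of the morphism $\bigoplus_{i+j=n}(\phi_i\otimes\psi_j)$ assigned to $r\rightarrow r'$, and both rest on the identity $\im(\phi_i\otimes\psi_j)=\im(\phi_i)\otimes_\F\im(\psi_j)$ together with multiplicativity of dimension under tensor products of vector spaces. The only difference is one of exposition: you make explicit the direct-sum additivity of rank and the case analysis behind the convention $\infty\cdot 0=0$, both of which the paper leaves implicit.
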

\begin{proof}
This follows by direct inspection of the homomorphism
\begin{align*}
f_{r',r} \otimes_{\mathbb{F}} g_{r',r} :
H_i(X_r;\F) \otimes_{\mathbb{F}} H_j(Y_r;\F) &\longrightarrow
H_i(X_{r'};\F) \otimes_{\mathbb{F}} H_j(Y_{r'};\F)\\
a \otimes_{\mathbb{F}} b &\mapsto f_{r',r} (a) \otimes_{\mathbb{F}} g_{r',r}(b)
\end{align*}
for each $r\rightarrow r' \in \mathbf{I}$.
Indeed, since $\mathsf{Img}(f_{r',r} \otimes_{\mathbb{F}} g_{r',r}) = \mathsf{Img} (f_{r',r}) \otimes_\F \mathsf{Img}(g_{r',r})$,
then the result follows from the fact that
 the dimension of the tensor product of two vector spaces equals
 the product of the dimensions.
\end{proof}
We are now ready to prove the main result of this section: the K\"unneth formula for the categorical product $\mathcal{ X } \times \mathcal{ Y }$.
\begin{theorem}\label{barcodeDPF}
Let $\mathbf{P}$ be the poset category of a separable (with respect to the order topology) totally ordered set.
Let
 $\mathcal{X},\mathcal{Y} \in \mathbf{S^P}$
be $\mathbf{P}$-indexed diagrams of spaces,
and  assume that $H_i(\mathcal{ X }; \F)$ and $H_j(\mathcal{ Y }; \F)$ are pointwise finite for each $0 \leq i,j \leq n$.
Then  $H_n(\mathcal{X}\times \mathcal{Y}; \F)$ is pointwise finite, and its
barcode satisfies:
\[
\bcd_n\left( \mathcal{X} \times \mathcal{Y}  ; \F\right) =
\bigcup_{i+j=n}
\Big\{
I \cap J   \; \Big\vert \; I \in \mathsf{bcd}_{i}(\mathcal{X};\F), \;J \in \mathsf{bcd}_{j}(\mathcal{Y};\F)
\Big\}
\] where the union on the right is of multisets (i.e., repetitions may occur).
\end{theorem}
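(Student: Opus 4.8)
The plan is to assemble the statement from three ingredients already established: the functor-level K\"unneth isomorphism of Lemma \ref{lemma:KunnethCategorical}, the Crawley--Boevey decomposition (Theorem \ref{thm:CrawleyBoevey}), and the interval-tensor formula $\mathds{1}_I \otimes \mathds{1}_J \cong \mathds{1}_{I\cap J}$ from Example \ref{expl:interval}. The work is essentially a bookkeeping computation threading these together, since the genuine topology and homological algebra have been absorbed into those earlier results.

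First I would settle pointwise finiteness, which is asserted as part of the conclusion. Fix $r \in \mathcal{P}$. By Lemma \ref{lemma:KunnethCategorical} (evaluated at $r$) one has $\dim_\F H_n(X_r \times Y_r; \F) = \sum_{i+j=n} \dim_\F H_i(X_r;\F)\cdot \dim_\F H_j(Y_r;\F)$, a finite sum of products of finite dimensions by hypothesis. Hence $H_n(\mathcal{X}\times\mathcal{Y};\F)$ is pointwise finite, and Theorem \ref{thm:CrawleyBoevey} applies to it as well as to each $H_i(\mathcal{X};\F)$ and $H_j(\mathcal{Y};\F)$.

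Next I would decompose the factors via Theorem \ref{thm:CrawleyBoevey}, writing $H_i(\mathcal{X};\F) \cong \bigoplus_{I\in\bcd_i(\mathcal{X})} \mathds{1}_I$ and $H_j(\mathcal{Y};\F) \cong \bigoplus_{J\in\bcd_j(\mathcal{Y})} \mathds{1}_J$, and substitute into Lemma \ref{lemma:KunnethCategorical}. Because the tensor product in $\mathbf{Mod}^\mathbf{P}_\F$ is computed objectwise (Definition \ref{def:SumIrredTensor}) and $\otimes_\F$ over a field commutes with arbitrary direct sums, the tensor distributes over the (possibly infinite) direct sums, yielding
\[
H_n(\mathcal{X}\times\mathcal{Y};\F)
\;\cong\;
\bigoplus_{i+j=n}\ \bigoplus_{\substack{I\in\bcd_i(\mathcal{X})\\ J\in\bcd_j(\mathcal{Y})}} \mathds{1}_I \otimes \mathds{1}_J
\;\cong\;
\bigoplus_{i+j=n}\ \bigoplus_{\substack{I\in\bcd_i(\mathcal{X})\\ J\in\bcd_j(\mathcal{Y})}} \mathds{1}_{I\cap J},
\]
where the final isomorphism is Example \ref{expl:interval}. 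The uniqueness clause of Theorem \ref{thm:CrawleyBoevey} then lets me read off the barcode as the indexing multiset, with the convention that a summand $\mathds{1}_{I\cap J}$ with $I\cap J = \emptyset$ is the zero functor and so contributes no interval.

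Since the heavy lifting is done upstream, there is no deep obstacle; the single point needing care is the interchange of $\otimes$ with infinite direct sums in the functor category. This is legitimate because at each index $r$ only finitely many $I$ (respectively $J$) contain $r$, so the objectwise tensor $H_i(X_r;\F)\otimes_\F H_j(Y_r;\F)$ is a finite sum, and one verifies that the resulting objectwise isomorphisms are natural in $r$, i.e.\ they assemble into an isomorphism in $\mathbf{Mod}^\mathbf{P}_\F$ rather than merely a pointwise one.
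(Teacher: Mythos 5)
Your proposal is correct and follows essentially the same route as the paper's proof: pointwise finiteness from Lemma \ref{lemma:KunnethCategorical}, then decomposing the factors via Theorem \ref{thm:CrawleyBoevey}, distributing $\otimes$ over direct sums, applying $\mathds{1}_I \otimes \mathds{1}_J \cong \mathds{1}_{I\cap J}$ from Example \ref{expl:interval}, and concluding by uniqueness of the barcode. Your extra remarks (the empty-intersection convention and the naturality of the objectwise isomorphisms) only make explicit what the paper leaves implicit.
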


\begin{proof}
The fact that $H_n(\mathcal{X\times Y}; \F)$ is pointwise finite follows  directly from Lemma \ref{lemma:KunnethCategorical}.
As for the barcode formula, and removing the field $\F$ from the notation,
we have the following sequence of isomorphisms
\begin{eqnarray*}
H_n(\mathcal{X\times Y})
&\cong&
\bigoplus_{i+j = n} H_i(\mathcal{X})\otimes H_j(\mathcal{Y}) \\
&\cong&
\bigoplus_{i+j = n}
\left(\bigoplus_{I \in \bcd_i(\mathcal{X})}  \mathds{1}_I\right)
\otimes
\left(\bigoplus_{J \in \bcd_j(\mathcal{Y})}  \mathds{1}_J\right)\\[.2cm]
&\cong&
\bigoplus_{i+j = n} \;
\bigoplus_{\substack{I \in \bcd_i(\mathcal{X}) \\ J \in \bcd_j(\mathcal{Y})}}
\mathds{1}_I \otimes \mathds{1}_J \\[.2cm]
&\cong&
\bigoplus_{i+j = n} \;
\bigoplus_{\substack{I \in \bcd_i(\mathcal{X}) \\ J \in \bcd_j(\mathcal{Y})}}
\mathds{1}_{I\cap J}.
\end{eqnarray*}
The direct sum distributes with respect to the
tensor product in $\mathbf{Mod}_\F^\mathbf{P}$ since it does so in $\mathbf{Mod}_\F$.
The last isomorphism
is the one from Example \ref{expl:interval}, and
the theorem follows from the uniqueness of the barcode.
\end{proof}
This result generalizes to the product of $k$ objects in $\mathbf{S^P}$ by inductively using $\mathsf{bcd}_i\left( \mathcal{ X }_1 \times \cdots \times \mathcal{ X }_{k-1} \right)$ and $\mathsf{bcd}_j\left( \mathcal{ X }_k \right)$ to compute $\mathsf{bcd}_{i+j}\left( \mathcal{ X }_1 \times \cdots \times \mathcal{ X }_k \right)$:
\begin{corollary}
	Let $\mathcal{ X }_1, \dots, \mathcal{ X }_k \in \mathbf{S^P}$. Assume that for each $1 \leq j \leq k$, $0 \leq n_j \leq n$, $H_{n_j}(\mathcal{ X }_j)$ is pointwise finite. Then
	\begin{align*}
	\mathsf{bcd}_n\left( \mathcal{ X }_1 \times \cdots \times \mathcal{ X }_k \right)= \bigg\lbrace I_1 \cap \cdots \cap  I_k \ \bigg\vert \ I_j \in  \mathsf{bcd}_{n_j}(\mathcal{ X }_j),\;\; \sum_{j=1}^{k} n_j = n\bigg\rbrace.
	\end{align*}
\end{corollary}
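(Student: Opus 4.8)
The plan is to induct on the number $k$ of factors, with the two-factor case being precisely Theorem \ref{barcodeDPF}. The inductive step rests on two standard observations: that the categorical product in $\mathbf{S^P}$ is associative up to natural isomorphism, and that the barcode depends only on the isomorphism type of an object in $\mathbf{Mod}_\F^\mathbf{P}$.

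First I would record that, since $\mathbf{S}$ has all pairwise products, the functor category $\mathbf{S^P}$ inherits associativity of the categorical product (Proposition \ref{catprod}), so that
\[
\mathcal{X}_1 \times \cdots \times \mathcal{X}_k \;\cong\; \big(\mathcal{X}_1 \times \cdots \times \mathcal{X}_{k-1}\big) \times \mathcal{X}_k
\]
as objects of $\mathbf{S^P}$. Applying $H_n(\,\cdot\,;\F)$ and invoking the inductive hypothesis---which, through Theorem \ref{barcodeDPF}, also guarantees that $H_i(\mathcal{X}_1 \times \cdots \times \mathcal{X}_{k-1};\F)$ is pointwise finite for each $i \leq n$, so that pointwise finiteness propagates at every stage---I would then be in a position to apply the two-factor formula with $\mathcal{X} = \mathcal{X}_1 \times \cdots \times \mathcal{X}_{k-1}$ and $\mathcal{Y} = \mathcal{X}_k$.

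Concretely, Theorem \ref{barcodeDPF} yields
\[
\bcd_n\big(\mathcal{X}_1 \times \cdots \times \mathcal{X}_k\big) = \bigcup_{i+j=n} \Big\{ I \cap I_k \;\Big\vert\; I \in \bcd_i(\mathcal{X}_1 \times \cdots \times \mathcal{X}_{k-1}),\; I_k \in \bcd_j(\mathcal{X}_k) \Big\}.
\]
Substituting the inductive description
\[
\bcd_i\big(\mathcal{X}_1 \times \cdots \times \mathcal{X}_{k-1}\big) = \Big\{ I_1 \cap \cdots \cap I_{k-1} \;\Big\vert\; I_\ell \in \bcd_{n_\ell}(\mathcal{X}_\ell),\; \textstyle\sum_{\ell=1}^{k-1} n_\ell = i \Big\}
\]
and using associativity of intersection, every element of the resulting multiset takes the form $I_1 \cap \cdots \cap I_k$ with $\sum_{\ell=1}^{k-1} n_\ell = i$ and $n_k = j$, hence $\sum_{\ell=1}^{k} n_\ell = i+j = n$. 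Ranging over all $i+j=n$ reproduces exactly the claimed indexing set, completing the induction.

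I expect the only delicate points to be bookkeeping rather than conceptual. One must track the \emph{multiset} (rather than set) structure throughout, so that repeated intervals arising from distinct homological decompositions are counted with the correct multiplicity; this is automatic once one checks that the isomorphism of Lemma \ref{lemma:KunnethCategorical}, together with distributivity of $\otimes$ over $\oplus$ in $\mathbf{Mod}_\F^\mathbf{P}$, respects direct-sum multiplicities. There is no genuine obstacle here: associativity of both the categorical product and of set intersection are strict enough, and pointwise finiteness is preserved at each stage by Theorem \ref{barcodeDPF}, so the argument is a clean induction.
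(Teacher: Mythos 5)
Your proof is correct and takes essentially the same route as the paper: the paper obtains this corollary by exactly the induction you describe, applying Theorem \ref{barcodeDPF} to the pair $\mathcal{X}_1 \times \cdots \times \mathcal{X}_{k-1}$ and $\mathcal{X}_k$ at each stage. Your added care about associativity of the product, propagation of pointwise finiteness (via Lemma \ref{lemma:KunnethCategorical}), and multiset bookkeeping simply makes explicit what the paper leaves implicit.
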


The result which is perhaps most relevant
to persistent homology computations with Rips complexes is as follows:
\begin{corollary}\label{cor:RipsKunneth}
Let $(X,d_X), (Y,d_Y)$ be finite metric spaces
and let $\bcd_n^\mathcal{R}(X,d_X)$ be the barcode of the Rips filtration
$\mathcal{R}(X, d_X) := \{R_\epsilon(X,d_X)\}_{\epsilon \geq 0}$. Then,
\[
\bcd_n^\mathcal{R}(X\times Y,d_{X\times Y}) = \bigcup_{i + j = n} \Big\{ I \cap J \; \Big| \; I \in \bcd^\mathcal{R}_i(X,d_X) \, , \; J \in \bcd_j^\mathcal{R}(Y,d_Y)\Big\}
\]
for all $n\in\N$, if $d_{X\times Y}$ is the maximum metric.
\end{corollary}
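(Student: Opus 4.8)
The plan is to realize both sides of the asserted identity as an instance of Theorem \ref{barcodeDPF}, taking $\mathbf{S} = \mathbf{Simp}$ and $\mathbf{P}$ to be the poset category of $[0,\infty)$. This $\mathbf{P}$ is totally ordered and separable with respect to the order topology, a countable dense subset being $\Q\cap[0,\infty)$, so the hypotheses on the index are met. The Rips filtration $\mathcal{R}(X,d_X) = \{R_\epsilon(X,d_X)\}_{\epsilon\geq 0}$ is naturally an object of $\mathbf{Simp}^\mathbf{P}$: it assigns to $\epsilon$ the complex $R_\epsilon(X,d_X)$, and to $\epsilon\leq\epsilon'$ the inclusion $R_\epsilon(X,d_X)\hookrightarrow R_{\epsilon'}(X,d_X)$, which is a simplicial map since enlarging the scale only adds simplices. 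By Remark \ref{rmk:spaceskunneth} the topological K\"unneth formula is valid in $\mathbf{Simp}$, so Theorem \ref{barcodeDPF} is applicable to such diagrams once its pointwise finiteness hypothesis is verified.

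The substantive step is to identify, \emph{as objects of} $\mathbf{Simp}^\mathbf{P}$, the filtration $\mathcal{R}(X\times Y, d_{X\times Y})$ with the categorical product $\mathcal{R}(X,d_X)\times\mathcal{R}(Y,d_Y)$. By Proposition \ref{catprod} this categorical product is computed levelwise, so at scale $\epsilon$ it equals $R_\epsilon(X,d_X)\times R_\epsilon(Y,d_Y)$, the categorical product in $\mathbf{Simp}$ of Definition \ref{def:Simp}; and Lemma \ref{lemma: rips} supplies the pointwise equality $R_\epsilon(X\times Y, d_{X\times Y}) = R_\epsilon(X,d_X)\times R_\epsilon(Y,d_Y)$ for every $\epsilon$. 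I expect the main obstacle to be the only non-formal point: upgrading this single-scale statement to an equality of $\mathbf{P}$-indexed \emph{diagrams}, i.e.\ checking naturality in $\epsilon$. Concretely, for $\epsilon\leq\epsilon'$ one must show that the Rips inclusion $R_\epsilon(X\times Y)\to R_{\epsilon'}(X\times Y)$ agrees with the product morphism $\big(\mathcal{R}(X,d_X)\times\mathcal{R}(Y,d_Y)\big)(\epsilon\leq\epsilon')$. Both maps commute with the coordinate projections onto $R_{\epsilon'}(X,d_X)$ and $R_{\epsilon'}(Y,d_Y)$ — for the Rips inclusion this follows from functoriality of $R_\epsilon$ (Example \ref{expl:Simp}) applied to the nonexpansive projections of the maximum-metric product — so by the uniqueness clause in the universal property defining $R_{\epsilon'}(X,d_X)\times R_{\epsilon'}(Y,d_Y)$ the two maps coincide. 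Hence $\mathcal{R}(X\times Y, d_{X\times Y}) = \mathcal{R}(X,d_X)\times\mathcal{R}(Y,d_Y)$ in $\mathbf{Simp}^\mathbf{P}$, and consequently $\bcd_n^\mathcal{R}(X\times Y, d_{X\times Y}) = \bcd_n\big(\mathcal{R}(X,d_X)\times\mathcal{R}(Y,d_Y);\F\big)$.

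The finiteness hypothesis is then routine: since $X$ and $Y$ are finite, every $R_\epsilon(X,d_X)$ and $R_\epsilon(Y,d_Y)$ is a finite simplicial complex, so all the homology groups $H_i(R_\epsilon(X,d_X);\F)$ and $H_j(R_\epsilon(Y,d_Y);\F)$ are finite-dimensional; thus $H_i(\mathcal{R}(X,d_X);\F)$ and $H_j(\mathcal{R}(Y,d_Y);\F)$ are pointwise finite for all $i,j$. Applying Theorem \ref{barcodeDPF} with this data yields
\[
\bcd_n\big(\mathcal{R}(X,d_X)\times\mathcal{R}(Y,d_Y);\F\big) = \bigcup_{i+j=n}\Big\{\, I\cap J \ \Big|\ I\in\bcd_i(\mathcal{R}(X,d_X);\F),\ J\in\bcd_j(\mathcal{R}(Y,d_Y);\F)\,\Big\},
\]
and substituting the definitions $\bcd_i(\mathcal{R}(X,d_X);\F)=\bcd^\mathcal{R}_i(X,d_X)$ and $\bcd_j(\mathcal{R}(Y,d_Y);\F)=\bcd^\mathcal{R}_j(Y,d_Y)$, together with the diagram identification of the previous paragraph on the left-hand side, gives the stated formula for all $n\in\N$. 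Everything beyond the naturality check is bookkeeping already furnished by Lemma \ref{lemma: rips}, Proposition \ref{catprod}, Remark \ref{rmk:spaceskunneth}, and Theorem \ref{barcodeDPF}.
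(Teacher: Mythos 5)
Your proposal is correct and follows essentially the paper's own route: identify the Rips filtration of $(X\times Y, d_{X\times Y})$ with the categorical product of the Rips filtrations of the factors (Lemma \ref{lemma: rips}, plus the naturality-in-$\epsilon$ check), then apply the categorical K\"unneth formula of Theorem \ref{barcodeDPF}. The only organizational difference is that the paper makes the bridge between $\mathbf{Simp}$ and $\mathbf{Top}$ explicit inside the proof --- fixing total orders and passing through $\mathbf{oSimp}$ and geometric realization via Lemma \ref{osimpandsimp} --- whereas you delegate exactly that bridging to Remark \ref{rmk:spaceskunneth}, so the mathematical content is the same.
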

\begin{proof}
Fix total orders on  $X$ and $Y$.
Then,  their $\epsilon$-Rips complexes
are ordered simplicial complexes, and
by lemmas \ref{lemma: rips} and \ref{osimpandsimp} we have that:
\[
|R_{\epsilon}(X \times Y)|
=
|R_{\epsilon}({X}) \times R_{\epsilon}({Y}) |
\simeq
|R_{\epsilon}({X}) \oslash R_{\epsilon}({Y}) |
\cong
|R_\epsilon(X)|\times |R_\epsilon(Y)|.
\]
Since the equivalences commute with inclusions of Rips complexes, the result follows.
\end{proof}

\section{A Persistent K\"{u}nneth formula for the generalized tensor product} \label{TPthm}
We will now establish  the K\"{u}nneth formula for the
generalized
tensor product $\mathcal{X}\otimes_\mathbf{g} \mathcal{Y}$ of two objects  $\mathcal{X,Y}\in\textbf{Top}^{\textbf{N}}$.
We start with a brief discussion on the grading of the tensor product
and Tor of two graded modules over a graded ring $R$.
The discussion then specializes  to  $R = \F[t]$ in order
to establish the $\F[t]$-isomorphisms
\begin{align}\label{eq:TensorTorCalculation}
\begin{split}
\left(
P\mathds{1}_{I = [\ell_I , \rho_I)}
\right)
\otimes_{\F[t]}
\left(
P\mathds{1}_{J = [\ell_J, \rho_J)}
\right)
&\cong
P\mathds{1}_{(\ell_J + I )\cap (\ell_I + J)}  \\[.2cm]
\mathsf{Tor}_{\F[t]}
\left(
P\mathds{1}_{I = [\ell_I , \rho_I)}
\; , \;
P\mathds{1}_{J = [\ell_J, \rho_J)}
\right)
&\cong
P\mathds{1}_{(\rho_J + I )\cap (\rho_I + J)}
\end{split}
\end{align}
for  $I,J \subset \N$. We end
with a persistent Eilenberg-Zilber theorem, from which the aforementioned persistent
K\"unneth theorem will follow.

\subsection{The tensor product and Tor of graded modules}\label{tensor}
Recall that a commutative ring $R$ with unity is called graded if it can be written
as $R = R_0 \oplus R_1 \oplus \cdots $ where each $R_i$ is an additive
subgroup of $R$ and $R_i R_j \subset R_{i+j}$ for every $i,j \in \N$.
Similarly, an $R$-module $A$ is   graded if it can be written
as $A = A_0 \oplus A_1 \oplus \cdots$, where  the $A_i$'s are subgroups of
$A$ and $R_i A_j \subset A_{i+j}$ for all $i,j \in \N$.
An element $a\in A$ (resp. in  $R$) is called homogeneous of degree $i\in \N$
if $a \in A_i$ (resp. in $R_i$), and we will use the shorthand  $\mathsf{deg}(a) = i$.

Let $A,B$ be graded modules over a graded ring $R$.
Our first goal is to describe a
direct sum decomposition of
the $R$-module
$A\otimes_R B$
 inducing the structure of a graded $R$-module.
Indeed, since  $A_i$ and  $B_j$ are $R_0$-modules for all $i,j \in \N$, then so are
$A,B$ and   we have the $R_0$-isomorphism
\[
A \otimes_{R_0} B  \cong
\bigoplus_{k\in \N}
\left(
\bigoplus_{i+j = k}
A_i \otimes_{R_0} B_j
\right).
\]
Fix $k\in \N$ and let
$J_k$ be the $R_0$-submodule of $A\otimes_{R_0} B$ generated by elements
of the form $(ra)\otimes b - a \otimes (r b)$, where
$a\in A$, $b\in B$ and $r \in R$ are homogeneous elements with
$\mathsf{deg}(a) + \mathsf{deg}(r) + \mathsf{deg}(b) = k $.
It follows that $J_k$ is a submodule of
\[
\left(A \otimes_{R_0} B\right)_k
:=
\bigoplus_{i+j = k}
A_i \otimes_{R_0} B_j
\]
and thus $J_k \cap J_\ell = \{0\}$ if $k \neq \ell$.
Let $J = J_0 \oplus J_1 \oplus \cdots$.
Any homogeneous element  $r\in R$ of degree $\ell$ induces a well-defined $R_0$-homomorphism
\[
\begin{array}{ccl}
(A\otimes_{R_0} B)_k  / J_k  &\longrightarrow & (A\otimes_{R_0} B)_{k+\ell}  / J_{k+\ell} \\[.2cm]
a\otimes b \,+\, J_k &\mapsto& (r a)\otimes b\, +\, J_{k + \ell}
\,=\, a \otimes (rb) \,+\, J_{k + \ell}
\end{array}
\]
and therefore
\begin{equation}\label{eq:TensorGrading}
A \otimes_{R_0} B/J \cong
\bigoplus_{k\in \N} (A\otimes_{R_0} B)_k / J_k
\end{equation}
inherits the structure of  a graded $R$-module.

The inclusion $R_0 \hookrightarrow R$ induces an
$R_0$-epimorphism $\iota: A \otimes_{R_0} B \longrightarrow A \otimes_R B$
with $ \mathsf{ker}(\iota) = J$.
Indeed, the elements of $J$ are exactly the relations
missing between $A \otimes_{R_0} B$ and $A \otimes_R B$ when defining
the latter as a quotient module.
It follows that $\iota$ induces an isomorphism
\[
\iota_* : A\otimes_{R_0} B /J \longrightarrow A\otimes_{R} B
\]of $R$-modules,
and the grading on the codomain induced by (\ref{eq:TensorGrading}) turns
$\iota_*$ into a graded isomorphism of graded $R$-modules.
We summarize this analysis as follows:

\begin{prop}\label{gradedtensor}
Let $A,B$ be graded modules over a graded ring $R$.
Then $A\otimes_R B $ decomposes as  the direct sum of the subgroups
\[
(A\otimes_R B)_k =
\left\{
\sum_\alpha a_\alpha \otimes b_\alpha \; \Big| \;
a_\alpha\in A, b_\alpha\in B  \mbox{ homogeneous, }
\mathsf{deg}(a_\alpha) + \mathsf{deg}(b_\alpha) = k
\right\}
\] and $R_\ell \cdot (A\otimes_R B)_k \subset (A\otimes_R B)_{k + \ell}$
for all $k,\ell \in \N$. In other words,
 $A\otimes_R B$ is a graded $R$-module.
\end{prop}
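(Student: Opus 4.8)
The plan is to transport, along the isomorphism $\iota_*$ constructed above, the grading on $A \otimes_{R_0} B / J$ given by (\ref{eq:TensorGrading}) to the $R$-module $A \otimes_R B$, and then to identify the resulting homogeneous pieces with the subgroups in the statement. First I would record that the decomposition $A \otimes_{R_0} B = \bigoplus_k (A \otimes_{R_0} B)_k$ is an internal direct sum, and that $J = \bigoplus_k J_k$ is a graded submodule because each generator $(ra) \otimes b - a \otimes (rb)$ with $\mathsf{deg}(a) + \mathsf{deg}(r) + \mathsf{deg}(b) = k$ lies in $(A \otimes_{R_0} B)_k$; consequently the quotient inherits the internal direct sum decomposition $A \otimes_{R_0} B / J = \bigoplus_k (A \otimes_{R_0} B)_k / J_k$. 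This directness is the only genuinely load-bearing point, and it is exactly what the condition $J_k \cap J_\ell = \{0\}$ for $k \neq \ell$ guarantees.

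Next I would set $(A \otimes_R B)_k := \iota_*\big( (A \otimes_{R_0} B)_k / J_k \big)$. Since $\iota_*$ is an isomorphism of $R$-modules, the images $(A \otimes_R B)_k$ form an internal direct sum whose total is $A \otimes_R B$. To match the description in the statement, I would trace an elementary tensor: for homogeneous $a \in A_i$ and $b \in B_j$, the epimorphism $\iota$ sends $a \otimes b$ to $a \otimes b \in A \otimes_R B$, so $\iota_*$ carries $(A \otimes_{R_0} B)_k / J_k$ precisely onto the set of finite sums $\sum_\alpha a_\alpha \otimes b_\alpha$ with each $a_\alpha, b_\alpha$ homogeneous and $\mathsf{deg}(a_\alpha) + \mathsf{deg}(b_\alpha) = k$. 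This is the subgroup $(A \otimes_R B)_k$ written in the Proposition.

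Finally, to see that the $R$-action respects the grading, I would take a homogeneous $r \in R_\ell$ and an element $\sum_\alpha a_\alpha \otimes b_\alpha \in (A \otimes_R B)_k$, and compute $r \cdot \big( \sum_\alpha a_\alpha \otimes b_\alpha \big) = \sum_\alpha (r a_\alpha) \otimes b_\alpha$. Since $r a_\alpha \in A_{\mathsf{deg}(a_\alpha) + \ell}$ is homogeneous, each summand now has total degree $k + \ell$, so $R_\ell \cdot (A \otimes_R B)_k \subset (A \otimes_R B)_{k + \ell}$, as required. The only point deserving care throughout is the well-definedness of assigning a degree to an element of $A \otimes_R B$ --- a priori an element admits many representations as sums of elementary tensors --- but this is settled once we know the graded pieces form a direct sum, which we obtained by transporting the direct sum decomposition of $A \otimes_{R_0} B / J$ through the graded isomorphism $\iota_*$.
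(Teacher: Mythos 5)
Your proposal is correct and takes essentially the same approach as the paper: Proposition \ref{gradedtensor} is stated there as a summary of the immediately preceding analysis, which constructs the decomposition $A\otimes_{R_0}B=\bigoplus_k (A\otimes_{R_0}B)_k$ and the graded submodule $J=\bigoplus_k J_k$, and transports the quotient grading through the isomorphism $\iota_*$, exactly as you do. The extra verifications you supply --- identifying $\iota_*\bigl((A\otimes_{R_0}B)_k/J_k\bigr)$ with the stated subgroups by tracing elementary tensors, and checking that the action of $R_\ell$ shifts degree by $\ell$ --- are precisely the steps the paper leaves implicit.
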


And now we prove the proposition for $\mathsf{Tor}_R(A,B)$:
\begin{prop}
If $A,B$ are graded $R$-modules,
then so is $\mathsf{Tor}_R(A,B)$.
\end{prop}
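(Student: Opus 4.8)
The plan is to compute $\mathsf{Tor}_R(A,B)$ by means of a \emph{graded free resolution} of $A$ and to transport the grading through the usual construction of $\mathsf{Tor}$, exactly as Proposition \ref{gradedtensor} let us transport it through the tensor product. First I would establish that every graded $R$-module admits such a resolution. Given $A$, choose a set of homogeneous generators $\{a_\alpha\}$ with $\mathsf{deg}(a_\alpha) = d_\alpha$, and let $F_0 = \bigoplus_\alpha R e_\alpha$ be the free module on a basis $\{e_\alpha\}$, graded so that $R_i e_\alpha \subset (F_0)_{i + d_\alpha}$; then the surjection $\partial_0 : F_0 \to A$ sending $e_\alpha \mapsto a_\alpha$ is a graded (degree-preserving) $R$-homomorphism.

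The elementary observation that drives the whole argument is that the kernel of a graded homomorphism $f : M \to N$ is a graded submodule of $M$. Indeed, if $m = \sum_k m_k$ is the decomposition into homogeneous components and $f(m) = 0$, then each $f(m_k) \in N_k$ lies in a distinct summand, so each $f(m_k) = 0$ and hence $m_k \in \ker(f)$; the same reasoning shows $\mathrm{im}(f) = \bigoplus_k f(M_k)$ is graded as well. Consequently $\ker(\partial_0)$ is a graded submodule of $F_0$, and I can iterate: pick homogeneous generators of $\ker(\partial_0)$ to build a graded free module $F_1$ with a graded map $\partial_1 : F_1 \to F_0$ whose image is $\ker(\partial_0)$, and so on. This produces a resolution $\cdots \to F_1 \xrightarrow{\partial_1} F_0 \xrightarrow{\partial_0} A \to 0$ by graded free modules in which every differential is graded.

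Next I would apply $-\otimes_R B$. By Proposition \ref{gradedtensor} each $F_n \otimes_R B$ is a graded $R$-module, and each induced map $\partial_n \otimes_R \mathrm{id}_B$ is graded, since on a homogeneous element $a \otimes b$ it preserves the total degree $\mathsf{deg}(a) + \mathsf{deg}(b)$ (because $\partial_n$ preserves $\mathsf{deg}(a)$). Thus $F_\bullet \otimes_R B$ is a chain complex of graded modules with graded differentials. Since $\mathsf{Tor}^R_n(A,B) = H_n(F_\bullet \otimes_R B)$, and the kernels and images of these graded differentials are graded submodules by the observation above, the homology $\ker(\partial_n \otimes \mathrm{id}) / \mathrm{im}(\partial_{n+1} \otimes \mathrm{id})$ is a quotient of graded submodules and therefore inherits the structure of a graded $R$-module.

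The step I expect to be most delicate is not any single computation but verifying that the grading so obtained is \emph{canonical}, i.e. independent of the chosen resolution. This follows because the comparison theorem for projective resolutions carries over verbatim to the graded setting: any two graded free resolutions of $A$ admit lifts that can be chosen homogeneous, hence they are related by graded chain homotopy equivalences, and the induced isomorphisms on homology are graded. The resulting grading on $\mathsf{Tor}_R(A,B)$ is thus well-defined, completing the proof.
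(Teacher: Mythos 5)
Your proposal is correct and takes essentially the same route as the paper's proof: construct a graded free resolution of $A$ (the paper uses the free module on \emph{all} homogeneous elements at each stage, while you use a chosen homogeneous generating set --- an immaterial difference), tensor with $B$, apply Proposition \ref{gradedtensor}, and observe that kernels and images of graded homomorphisms are graded, so the homology inherits the grading. Your closing paragraph on independence of the chosen resolution, via the graded comparison theorem, is a sound refinement that the paper leaves implicit.
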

\begin{proof}
Fix a graded free resolution
\[
\cdots
\rightarrow
\mathcal{F}_2(A)
\rightarrow
\mathcal{F}_1(A)
\rightarrow
\mathcal{F}_0(A)
\rightarrow \mathcal{F}_{-1}(A)= A
\rightarrow 0
\]
defined inductively as follows.
Let $\mathcal{F}_0(A)$ be the free $R$-module generated
by the homogeneous elements of $A$,
and define  for each $i\in \N$ the additive subgroup
\[
\mathcal{F}_{0,i}(A)
=
\left\{
\sum_\alpha r_\alpha \cdot a_\alpha \; \Big| \;
 r_\alpha \in R, a_\alpha \in A\mbox{ homogeneous with } \mathsf{deg}(r_\alpha) + \mathsf{deg}(a_\alpha) = i
\right\}.
\]
Then
$\mathcal{F}_0(A) = \mathcal{F}_{0,0}(A) \oplus \mathcal{F}_{0,1}(A)
\oplus \cdots$,
and  with this decomposition,
the natural map $\mathcal{F}_0(A) \rightarrow \mathcal{F}_{-1}(A)$
is a surjective graded homomorphism of graded $R$-modules.
In particular, the kernel of this homomorphism is a graded
$R$-submodule of $\mathcal{F}_0(A)$.
We proceed inductively by letting
$\mathcal{F}_{j+1}(A)$ be the  graded free $R$-module
generated by the homogeneous elements in the kernel
of $\mathcal{F}_j(A) \rightarrow \mathcal{F}_{j-1}(A)$.

Taking the tensor product with $B$ over $R$ yields
\[
\cdots
\rightarrow
\mathcal{F}_2(A) \otimes_R B
\rightarrow
\mathcal{F}_1(A) \otimes_R B
\rightarrow
\mathcal{F}_0(A) \otimes_R B
\rightarrow A \otimes_R B
\rightarrow 0
\]
and we note that:
\begin{enumerate}
    \item  Each $\mathcal{F}_j(A)\otimes_R B$ is a graded $R$-module, by Proposition \ref{gradedtensor},
and each $\mathcal{F}_j(A)\otimes_R B \rightarrow \mathcal{F}_{j-1}(A)\otimes_R B$ is a graded $R$-homomorphism.
    \item $\mathsf{ker}\Big( \mathcal{F}_j(A)\otimes_R B \rightarrow \mathcal{F}_{j-1}(A)\otimes_R B \Big) $ decomposes as the direct sum
\[
\bigoplus\limits_{k\in \N}
\mathsf{ker}\Big(
\big(\mathcal{F}_j(A)\otimes_R B\big)_k
\rightarrow
\big(\mathcal{F}_{j-1}(A)\otimes_R B\big)_k
\Big)
\]
and similarly, $\mathsf{Img}\Big( \mathcal{F}_{j+1}(A)\otimes_R B \rightarrow \mathcal{F}_{j}(A)\otimes_R B \Big) $ decomposes as
\[
\bigoplus\limits_{k\in \N}
\mathsf{Img}\Big(
\big(\mathcal{F}_{j+1}(A)\otimes_R B\big)_k
\rightarrow
\big(\mathcal{F}_{j}(A)\otimes_R B\big)_k
\Big)
\] with the image being a graded $R$-submodule of the kernel.
    \item $\mathsf{Tor}_R(A,B)$ inherits the structure of a graded $R$-module
from the decomposition
\[
\mathsf{Tor}_R(A,B) \cong
\bigoplus\limits_{k\in \N}\frac
{\mathsf{ker}\Big(
\big(\mathcal{F}_1(A)\otimes_R B\big)_k
\rightarrow
\big(\mathcal{F}_{0}(A)\otimes_R B\big)_k
\Big)}
{\mathsf{Img}\Big(
\big(\mathcal{F}_{2}(A)\otimes_R B\big)_k
\rightarrow
\big(\mathcal{F}_{1}(A)\otimes_R B\big)_k
\Big)}
\]
	\end{enumerate} finishing the proof.
\end{proof}

We now specialize to the case $R = \F[t]$ and present explicit
computations for interval modules.
For $m \in \mathbb{N},k  \in \mathbb{N}\cup \{\infty\}$,
let $\mathcal{ I }_{m,k}$ denote the graded $\F[t]$-module $P\mathds{1}_{[m,m+k)}$.
For brevity, we will drop the subscript $k$ if it is $\infty$.
That is, we will use $\mathcal{ I }_m$ to denote $P\mathds{1}_{[m,\infty)}$.
Then,
$\mathcal{ I }_{m,k} \cong  t^m\mathbb{F}[t]\big/(t^{m+k})$ and
$\mathcal{ I }_{m} \cong t^m\mathbb{F}[t]$.
\begin{prop}\label{intervaltensor}
	Let $m ,n \in \mathbb{N}$, $k, l \in \mathbb{N} \cup \{\infty\}$. Then $\mathcal{ I }_{m,k} \otimes_{\mathbb{F}[t]} \mathcal{ I }_{n,l} \cong \mathcal{ I }_{m+n,\min\{k,l\}}$ and $\mathsf{Tor}_{\mathbb{F}[t]}(\mathcal{ I }_{m,k}, \mathcal{ I }_{n,l}) \cong \mathcal{ I }_{m+n+\max\{k,l\},\min\{k,l\}} $.
\end{prop}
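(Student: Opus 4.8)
The plan is to reduce both computations to the ``unshifted'' interval modules $M_k := \mathbb{F}[t]/(t^k)$ (with the convention $M_\infty := \mathbb{F}[t]$) and then apply standard homological algebra over the PID $\mathbb{F}[t]$. The starting observation is that $\mathcal{I}_{m,k} = t^m\mathbb{F}[t]/(t^{m+k}) \cong \Sigma^m M_k$, where $\Sigma^m$ denotes the functor shifting the grading up by $m$ (so the generator, which sat in degree $0$ in $M_k$, now sits in degree $m$, and $t^k$ still annihilates it). Since both $\otimes_{\mathbb{F}[t]}$ and $\mathsf{Tor}_{\mathbb{F}[t]}$ are formed degreewise, and a degree shift merely relabels homogeneous components, I would first record the two bookkeeping identities
\[
\Sigma^m A \otimes_{\mathbb{F}[t]} \Sigma^n B \cong \Sigma^{m+n}\big(A \otimes_{\mathbb{F}[t]} B\big), \qquad \mathsf{Tor}_{\mathbb{F}[t]}\big(\Sigma^m A, \Sigma^n B\big) \cong \Sigma^{m+n}\,\mathsf{Tor}_{\mathbb{F}[t]}(A,B),
\]
which hold because a graded free resolution of $\Sigma^m A$ is obtained by applying $\Sigma^m$ to one of $A$. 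This reduces the proposition to computing $M_k \otimes M_l$ and $\mathsf{Tor}(M_k, M_l)$.

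For the tensor product, I would invoke the elementary isomorphism $R/I \otimes_R R/J \cong R/(I+J)$ with $R = \mathbb{F}[t]$, $I = (t^k)$, $J = (t^l)$; since $(t^k) + (t^l) = (t^{\min\{k,l\}})$ this gives $M_k \otimes_{\mathbb{F}[t]} M_l \cong M_{\min\{k,l\}}$, and reinstating the shift $\Sigma^{m+n}$ lands at $\mathcal{I}_{m+n,\min\{k,l\}}$. The case where $k$ or $l$ equals $\infty$ is automatic, as $M_\infty = \mathbb{F}[t]$ is the unit for $\otimes_{\mathbb{F}[t]}$.

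For $\mathsf{Tor}$, the key input is the two-term graded free resolution $0 \to \Sigma^k \mathbb{F}[t] \xrightarrow{\,\cdot\, t^k\,} \mathbb{F}[t] \to M_k \to 0$, valid for finite $k$ (the shift $\Sigma^k$ on the left is forced, since the kernel of $\mathbb{F}[t] \to M_k$ is generated by the homogeneous element $t^k$ of degree $k$). Tensoring with $M_l$ collapses this to the complex $0 \to \Sigma^k M_l \xrightarrow{\,\cdot\, t^k\,} M_l \to 0$, so $\mathsf{Tor}_1(M_k,M_l) \cong \ker\big(\cdot\, t^k \colon \Sigma^k M_l \to M_l\big)$ and all higher Tor vanish. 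A direct homogeneous-basis count then identifies the kernel: a generator-multiple $t^j$, sitting in degree $k+j$ of $\Sigma^k M_l$, is annihilated exactly when $k+j \geq l$, so the surviving classes form $\Sigma^{\max\{k,l\}} M_{\min\{k,l\}}$; reinstating $\Sigma^{m+n}$ yields $\mathcal{I}_{m+n+\max\{k,l\},\,\min\{k,l\}}$. When either exponent is $\infty$ the corresponding module is free, hence flat, so $\mathsf{Tor}$ vanishes, which matches the right-hand side since $\mathcal{I}_{\infty,\,\cdot\,}$ corresponds to the empty interval $[\infty,\infty+\,\cdot\,)$ and is the zero module.

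I expect the only genuinely delicate point to be the grading bookkeeping in the $\mathsf{Tor}$ computation---namely, verifying that the surviving kernel is shifted by exactly $\max\{k,l\}$ rather than by $k$ or $l$ alone. This is where the shift $\Sigma^k$ in the resolution interacts with the degrees already present in $M_l$, and the two finite subcases $k \leq l$ and $k \geq l$ must be reconciled; they do agree, consistently with the symmetry of $\mathsf{Tor}$ in its two arguments. Everything else is standard PID homological algebra carried out degreewise.
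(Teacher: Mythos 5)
Your proof is correct and takes essentially the same route as the paper's: both arguments compute the tensor product by an elementary quotient calculation and obtain $\mathsf{Tor}$ from the two-term graded free resolution of the interval module (your resolution $0 \to \Sigma^k \mathbb{F}[t] \to \mathbb{F}[t] \to M_k \to 0$ is exactly the paper's $0 \to \mathcal{I}_{m+k} \to \mathcal{I}_m \to \mathcal{I}_{m,k} \to 0$ with the shift factored out), identifying $\mathsf{Tor}$ as the kernel of multiplication by $t^k$ via the same degree count. The only difference is organizational---you isolate the grading shifts up front through the $\Sigma$-identities and invoke $R/I \otimes_R R/J \cong R/(I+J)$, which tidies the bookkeeping but is not a different argument.
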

\begin{proof}
	There is a graded  $\F[t]$-isomorphism $\mathcal{ I }_{m} \otimes_{\mathbb{F}[t]}	\mathcal{ I }_{n} \rightarrow \mathcal{ I }_{m+n}$, which takes the generator $t^m \otimes_{\mathbb{F}[t]} t^n $ to $ t^{m+n}$. Since $\mathcal{ I }_{m,k}$ and $\mathcal{ I }_{n,l}$ are quotient modules of $\mathcal{ I }_{m}$ and $	\mathcal{ I }_{n}$ respectively, their tensor product is isomorphic to a quotient module of $\mathcal{ I }_{m+n}$, say $\mathcal{ I }_{m+n,q}$ for some appropriate $1 \leq q \leq \infty$. The isomorphism takes $t^m \otimes_{\mathbb{F}[t]} t^n$ to $ t^{m+n}$ and the action of $t^p$ gives $t^p\cdot \left( t^m \otimes_{\mathbb{F}[t]} t^n \right) \rightarrow t^{m+n+p}$.	The left hand side is zero if either $t^{p+m}$ is zero in $\mathcal{ I }_{m,k}$ or if $t^{p+n}$ is zero in $\mathcal{ I }_{n,l}$. This implies that if $p \geq \min\lbrace k,l \rbrace$, then $t^{m+n+p}$ is zero in $\mathcal{ I }_{m+n,q}$. The minimum of all such $p$ is $\min\lbrace k,l \rbrace$ and therefore, $q=\min\{k,l\}$.\\
	To compute $\mathsf{Tor}_{\mathbb{F}[t]}\left(\mathcal{ I }_{m,k}, \mathcal{ I }_{n,l}\right)$, we fix  a  graded free resolution for $\mathcal{ I }_{m,k}$:
	\begin{align*}
	0 \rightarrow \mathcal{ I }_{m+k} \rightarrow \mathcal{ I }_{m}  \rightarrow \mathcal{ I }_{m,k}\rightarrow 0 .
	\end{align*}
	Taking the tensor product of the sequence with $\mathcal{ I }_{n,l}$ gives us:
	\begin{align*}
	\mathcal{ I }_{m+k}\otimes_{\mathbb{F}[t]} \mathcal{ I }_{n,l} \rightarrow  \mathcal{ I }_{m} \otimes_{\mathbb{F}[t]} \mathcal{ I }_{n,l} \rightarrow \mathcal{ I }_{m,k} \otimes_{\mathbb{F}[t]} \mathcal{ I }_{n,l} \rightarrow 0.
	\end{align*}
	Using the definition of $\mathsf{Tor}$ and the tensor product of intervals, we get that
	\begin{align*}
	\mathsf{Tor}_{\mathbb{F}[t]}(\mathcal{ I }_{m,k},\mathcal{ I }_{n,l}) &\cong \ker\Big(\mathcal{ I }_{m+k}\otimes_{\mathbb{F}[t]} \mathcal{ I }_{n,l} \rightarrow  \mathcal{ I }_{m} \otimes_{\mathbb{F}[t]} \mathcal{ I }_{n,l}\Big)\\
	&\cong \ker\Big(\mathcal{ I }_{m+k+n,l} \rightarrow  \mathcal{ I }_{m+n,l}\Big)\\
	&\cong \mathcal{ I }_{m+n+\max\{k,l\}, \min\{k,l\}}
	\end{align*}
	where the last isomorphism  is justified by the following argument: the element $t^{m+p+n} \in \mathcal{ I }_{m+k+n,l} $ goes to $t^pt^{m+n} \in \mathcal{ I }_{m+n,l}$ where $p \geq k$. The latter is zero if and only if $p \geq l$, or said equivalently  $p \geq \max \lbrace k,l\rbrace$. This finishes the proof.
\end{proof}

\begin{remark}
This last proposition recovers the formulas shown in (\ref{eq:TensorTorCalculation}). Moreover, using bilinearity,
these isomorphisms  can be used to compute the tensor product and  Tor of any pair of finitely generated graded $\F[t]$-modules.
\end{remark}

\subsection{The Graded Algebraic K\"{u}nneth Formula}
The Algebraic K\"{u}nneth Formula for  graded modules over a graded PID $R$ is proved in exactly the same way as Theorem \ref{AKT},
by noticing that all the steps  can be carried out in a degree-preserving fashion.
A few versions of this have appeared recently, see for instance  \cite[Proposition 2.9]{algkunneth} or \cite[Theorem 10.1]{pershomologicalalgebra},
 and we  include it next for completeness:

\begin{theorem}\label{algkunnethpers}
Let $C$ and $C'$ be two chain complexes of
graded modules over a graded PID $R$, and assume that one of $C,C'$ is flat.
Then, for each $n\in \N$, there is a natural short exact sequence
\begin{align*}
0
\longrightarrow
\bigoplus_{i+j = n} H_i(C) \otimes_{R} H_j(C')\;
\xrightarrow{\;\;\mu\;\;}
H_n(C \otimes_{R} C')
\xrightarrow{\;\;\nu \;\;} \hspace{3cm}\\
\hspace{2cm}
\bigoplus_{i + j = n}\mathsf{Tor}_{R}(H_i(C),H_{j-1}(C'))
\longrightarrow
0
\end{align*}
which splits, though  not naturally.
\end{theorem}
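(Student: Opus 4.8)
The plan is to replicate the proof of the ungraded Algebraic K\"unneth Formula (Theorem \ref{AKT}) essentially verbatim, checking at each step that every module in sight is a graded $R$-module and that every map is a graded (degree-preserving) $R$-homomorphism. Two facts are what make this bookkeeping go through: Proposition \ref{gradedtensor}, which says that the tensor product of graded modules is graded with graded structure maps, and the graded analogue of the structure theorem for a PID, namely that over a graded PID a graded submodule of a graded free module is again graded free. Everything else is a degree-tracked copy of the classical argument.

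First I would set up the cycles and boundaries. Since each differential $\partial_n : C_n \to C_{n-1}$ is a graded $R$-homomorphism, its kernel $Z_n$ and its image $B_{n-1}$ are graded $R$-submodules, and viewing $Z = \{Z_n\}$ and $B = \{B_n\}$ as complexes with zero differential gives a short exact sequence of complexes of graded modules
\[
0 \longrightarrow Z \longrightarrow C \xrightarrow{\ \partial\ } B[-1] \longrightarrow 0,
\qquad B[-1]_n := B_{n-1},
\]
with all maps graded. Because one of $C,C'$ is flat---and over a graded PID flatness is equivalent to being graded torsion-free, so that a graded submodule such as each $B_{n-1}\subseteq C_{n-1}$ is again flat---tensoring with $C'$ preserves exactness, and by Proposition \ref{gradedtensor} we obtain a short exact sequence
\[
0 \longrightarrow Z \otimes_R C' \longrightarrow C \otimes_R C' \longrightarrow B[-1] \otimes_R C' \longrightarrow 0
\]
of complexes of graded $R$-modules with graded differentials.

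Next I would pass to the associated long exact sequence in homology. Exactly as in the argument showing $\mathsf{Tor}_R(A,B)$ is graded, homology of a complex of graded modules with graded differentials is computed degree by degree and is again graded, so the long exact sequence consists of graded modules and graded maps. Since $Z$ and $B$ carry zero differentials and the $Z_i,B_i$ are flat, flatness yields graded identifications $H_n(Z\otimes_R C')\cong\bigoplus_{i+j=n}Z_i\otimes_R H_j(C')$ and likewise for $B[-1]$, and the connecting homomorphism $\iota$ is, in each degree, induced by the inclusions $B_i\hookrightarrow Z_i$ tensored with $H_j(C')$. Breaking the long exact sequence into short pieces gives, for each $n$, a short exact sequence of graded modules
\[
0 \longrightarrow \operatorname{coker}(\iota) \longrightarrow H_n(C \otimes_R C') \longrightarrow \ker(\iota) \longrightarrow 0 .
\]
To identify the ends, I would use that $0 \to B_i \to Z_i \to H_i(C) \to 0$ is a graded length-one free resolution of $H_i(C)$ (here $Z_i$ and $B_i$ are graded free by the graded structure theorem); tensoring it with $H_j(C')$ and invoking the definitions of the graded tensor product and of $\mathsf{Tor}_R$ identifies $\operatorname{coker}(\iota)$ with $\bigoplus_{i+j=n}H_i(C)\otimes_R H_j(C')$ and $\ker(\iota)$ with $\bigoplus_{i+j=n}\mathsf{Tor}_R(H_i(C),H_{j-1}(C'))$, both as graded modules. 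Naturality of the resulting sequence is inherited from the ungraded construction.

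The step I expect to require the most care is the \emph{non-natural} splitting. In the ungraded proof it comes from choosing a splitting of $0 \to Z_n \to C_n \to B_{n-1} \to 0$, which exists because $B_{n-1}$ is projective; this produces a chain map $C \to H(C)$ (with $H(C)$ given the zero differential) inducing the identity on homology, which after tensoring splits the K\"unneth sequence. The point to verify is that this choice can be made degree-preservingly: since $B_{n-1}$ is graded free, hence graded projective, the graded short exact sequence $0 \to Z_n \to C_n \to B_{n-1} \to 0$ splits in the category of graded modules, so the induced projection, the chain map $C \to H(C)$, and the resulting section of the K\"unneth sequence are all graded. Hence the sequence splits as graded $R$-modules, though still not naturally.
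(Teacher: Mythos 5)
Your proposal follows exactly the route the paper prescribes --- Theorem \ref{algkunnethpers} is justified there by re-running the classical argument behind Theorem \ref{AKT} and checking that every step is degree-preserving --- and the exact-sequence part of your write-up is correct: the graded sequences $0\to Z_n\to C_n\to B_{n-1}\to 0$, the tensored long exact sequence, and the identification of the cokernel and kernel of the connecting map all go through. The only repair needed there is to treat $0\to B_i\to Z_i\to H_i(C)\to 0$ as a graded \emph{flat} resolution (flat resolutions compute $\mathsf{Tor}$): under the stated hypothesis $Z_i$ and $B_i$ are merely flat, being graded submodules of the flat module $C_i$.

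The genuine gap is in the splitting step, and it has two parts. First, your freeness claim is unjustified: ``a graded submodule of a graded \emph{free} module is graded free'' does not apply, because $C_i$ is assumed flat, not free, and over a PID flat does not imply projective (think of $\Q$ over $\Z$). This half is repairable in the paper's actual setting: for $\N$-graded modules over $\F[t]$, graded torsion-free already implies graded free (choose vector-space complements $V_k$ of $tM_{k-1}$ inside $M_k$ and check that $M\cong\bigoplus_k V_k\otimes_{\F} t^k\F[t]$), but that is a different lemma from the one you cite, and its ungraded analogue is false. Second, and more seriously, a chain map $\theta\colon C\to H(C)$ alone does not split the sequence ``after tensoring.'' The classical retraction of $\mu$ is $H_n(\theta\otimes\theta')$, which requires the analogous chain map $\theta'\colon C'\to H(C')$ on the \emph{other} factor; tensoring $\theta$ with $\mathrm{id}_{C'}$ only produces a map into $H_n\big(H(C)\otimes_R C'\big)$, and this is not $\bigoplus_{i+j=n}H_i(C)\otimes_R H_j(C')$ because $H(C)$ is in general not flat --- extracting the tensor part of $H_n\big(H(C)\otimes_R C'\big)$ is exactly the problem you started with, now for the pair $(H(C),C')$. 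Under the theorem's hypotheses $C'$ is an arbitrary complex, with no flatness and hence no retraction $C'_n\to Z'_n$, so your construction cannot be completed as written; this subtlety is already present in the ungraded Theorem \ref{AKT} when only one factor is flat. The missing idea is a replacement argument: substitute $C'$ by a quasi-isomorphic complex of graded free modules (possible since all complexes here are $\N$-graded, hence bounded below), split the K\"unneth sequence for that pair by the two-sided retraction argument, and transfer the splitting back through the natural map of short exact sequences via the short five lemma. With that addition --- or with the theorem restated for complexes of graded free modules, which is all the paper ever applies it to --- your proof is complete.
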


It is at this point that we depart from the existing literature on persistent K\"unneth
formulas for the tensor product of filtered chain complexes.
Indeed, next we will turn these  algebraic results into theorems at the level of filtered topological spaces.
We begin with the following result:
\begin{theorem}
	Let $\mathcal{X}, \mathcal{Y} \in \mathbf{Top^N}$.
Then, for all $n\in \N$, we have a natural short exact sequence
\begin{align*}
0
\rightarrow
\bigoplus_{i+j = n }
PH_i(\mathcal{X}) \otimes_{\mathbb{F}[t]} PH_{j}(\mathcal{Y})
\rightarrow
H_n \left(PS_{\ast}(\mathcal{ T}\left( \mathcal{X} \right)) \otimes_{\mathbb{F}[t]} PS_{\ast}(\mathcal{ T}\left(\mathcal{Y}\right))\right) \rightarrow \hspace{1cm}\\
	\bigoplus_{i+j = n} \mathsf{Tor}_{\mathbb{F}[t]}\left(PH_i(\mathcal{X}),PH_{j-1}(\mathcal{Y})\right) \rightarrow 0
	\end{align*}
which 	splits, but not naturally.
\end{theorem}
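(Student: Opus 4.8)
The plan is to apply the Graded Algebraic K\"unneth Formula (Theorem \ref{algkunnethpers}) over the graded PID $R = \F[t]$ to the two persistence chain complexes $C = PS_*(\mathcal{T}(\mathcal{X}))$ and $C' = PS_*(\mathcal{T}(\mathcal{Y}))$, and then to rewrite the three terms of the resulting sequence in terms of the persistent homologies of $\mathcal{X}$ and $\mathcal{Y}$. Indeed, $\F[t]$ is a graded PID, and both $C$ and $C'$ are chain complexes of graded $\F[t]$-modules whose boundary maps are degree-preserving, so the theorem applies once its flatness hypothesis is verified. Note that the middle term $H_n\big(PS_*(\mathcal{T}(\mathcal{X})) \otimes_{\F[t]} PS_*(\mathcal{T}(\mathcal{Y}))\big)$ appears verbatim in the target statement, so no identification is needed there; only the outer tensor and $\mathsf{Tor}$ terms must be translated back to $\mathcal{X}$ and $\mathcal{Y}$.

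The flatness hypothesis is where the telescope is essential, and I expect it to be the crux of the argument. Because $\mathcal{T}(\mathcal{X}) \in \mathbf{FTop}$ is a genuine filtered space, every structure map $\mathcal{T}_j(\mathcal{X}) \hookrightarrow \mathcal{T}_{j+1}(\mathcal{X})$ is an inclusion, so the induced map on singular chains carries basis $n$-simplices injectively to basis $n$-simplices. Assigning to each singular $n$-simplex $\sigma$ its birth index $b(\sigma) := \min\{ j : \im(\sigma) \subseteq \mathcal{T}_j(\mathcal{X})\}$, one sees that $\sigma$ generates a free rank-one graded submodule $t^{b(\sigma)}\F[t] \cong \mathcal{I}_{b(\sigma)}$ of $PS_n(\mathcal{T}(\mathcal{X}))$, and that $PS_n(\mathcal{T}(\mathcal{X}))$ is the direct sum of these submodules over all $\sigma$. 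Hence each $PS_n(\mathcal{T}(\mathcal{X}))$ is a free, and in particular flat, graded $\F[t]$-module, and likewise for $\mathcal{Y}$ (only one of the two is needed). It is precisely to secure this freeness that we pass to the telescope: for a general $\mathcal{X} \in \mathbf{Top^N}$ the maps $S_n(X_i) \to S_n(X_{i+1})$ induced by non-injective $f_i$ may collapse distinct simplices, so $PS_n(\mathcal{X})$ need not be free.

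With flatness established, Theorem \ref{algkunnethpers} yields a natural short exact sequence, splitting non-naturally, relating $H_n\big(PS_*(\mathcal{T}(\mathcal{X})) \otimes_{\F[t]} PS_*(\mathcal{T}(\mathcal{Y}))\big)$ to the $\otimes_{\F[t]}$ and $\mathsf{Tor}_{\F[t]}$ of the modules $H_i(PS_*(\mathcal{T}(\mathcal{X})))$ and $H_j(PS_*(\mathcal{T}(\mathcal{Y})))$. To finish, I would identify these homology modules: Theorem \ref{PH} gives graded $\F[t]$-isomorphisms $H_i(PS_*(\mathcal{T}(\mathcal{X}))) \cong PH_i(\mathcal{T}(\mathcal{X}))$, while the levelwise homotopy equivalences $X_j \simeq \mathcal{T}_j(\mathcal{X})$ (each $\mathcal{T}_j(\mathcal{X})$ deformation retracts onto $X_j$) assemble into a natural isomorphism $H_i(\mathcal{X}; \F) \cong H_i(\mathcal{T}(\mathcal{X}); \F)$ in $\mathbf{Mod}_\F^\NN$---the comparison squares commute up to homotopy, hence strictly after applying homology---so that $PH_i(\mathcal{T}(\mathcal{X})) \cong PH_i(\mathcal{X})$, and similarly for $\mathcal{Y}$. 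Substituting these isomorphisms termwise into the K\"unneth sequence produces the desired sequence, with the naturality in $(\mathcal{X}, \mathcal{Y})$ and the non-natural splitting both inherited from Theorem \ref{algkunnethpers}.
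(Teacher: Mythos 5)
Your proposal is correct and follows essentially the same route as the paper's proof: verify flatness of $PS_*(\mathcal{T}(\mathcal{X}))$ and $PS_*(\mathcal{T}(\mathcal{Y}))$ via freeness (the paper asserts this directly, you spell out the birth-index basis), apply the graded algebraic K\"unneth formula with $C = PS_*(\mathcal{T}(\mathcal{X}))$, $C' = PS_*(\mathcal{T}(\mathcal{Y}))$, and identify the outer terms through the natural isomorphisms $PH_i(\mathcal{X};\F) \cong PH_i(\mathcal{T}(\mathcal{X});\F)$ induced by the levelwise deformation retractions, with the same observation that the comparison squares commute up to homotopy and hence strictly on homology.
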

\begin{proof}
Since $\mathcal{T(X)}$ and $\mathcal{T(Y)}$ are filtered spaces,
then $ PS_*(\mathcal{T(X)})$ and $PS_*(\mathcal{T(Y)})$
are chain complexes of free graded $\F[t]$-modules, and hence flat.
Now, each inclusion $\iota_i : X_i \hookrightarrow \mathcal{T}_i(\mathcal{X})$ is a homotopy equivalence,
and thus induces an isomorphism at the level of homology.
One thing to note is that if $i < j$, then
\[
\begin{tikzcd}
\mathcal{T}_i(\mathcal{X}) \arrow[r, hook] & \mathcal{T}_j(\mathcal{X})\\
X_i \arrow[r, "\mathcal{X}(i < j)"'] \arrow[u, "\iota_i", hook]& X_j \arrow[u, "\iota_j"', hook]
\end{tikzcd}
\]
commutes only up to homotopy, but this is enough to conclude that $\iota = \{\iota_i\}_{i\in \N}$
induces  natural isomorphisms $\iota_* : H_n(\mathcal{X}; \F) \longrightarrow H_n(\mathcal{T(X)}; \F)$, $n\in \N$.
The result follows from plugging $C= PS_*(\mathcal{T(X)})$ and $C' = PS_*(\mathcal{T(Y)})$ into Theorem \ref{algkunnethpers},
and using the natural $\F[t]$-isomorphisms
$P\iota_* : PH_i(\mathcal{X};\F) \longrightarrow PH_i(\mathcal{T(X)};\F)$.
\end{proof}

\subsection{A Persistent Eilenberg-Zilber theorem}
Our next objective is to show that the chain complexes
$PS_*(\mathcal{T(X)} ) \otimes_{\F[t]} PS_*(\mathcal{T(Y)})$
and
$PS_*(\mathcal{X}\otimes_\mathbf{g} \mathcal{Y})$
have naturally
isomorphic
homology.
The result follows, as we will show in Theorem \ref{thm:PersistentEZ},
from applying the machinery of acyclic models \cite{acyclic}
to the functors
\begin{align*}
E: \mathbf{FTop}_0 \times \mathbf{FTop}_0 &\longrightarrow \textbf{gCh}_{\mathbb{F}[t]}\\
(\mathcal{X},\mathcal{Y}) &\mapsto PS_{\ast}(\mathcal{X}) \otimes_{\mathbb{F}[t]} PS_{\ast}(\mathcal{Y})\\[.2cm]
F : \mathbf{FTop}_0 \times \mathbf{FTop}_0 &\longrightarrow \textbf{gCh}_{\mathbb{F}[t]}\\
(\mathcal{X},\mathcal{Y}) &\mapsto \overline{PS}_{\ast}(\mathcal{X} \otimes \mathcal{Y})
\end{align*}
Here $\mathbf{FTop}_0$ is the full subcategory of $\mathbf{FTop}$ comprised
of those filtered spaces $\mathcal{X} = \{X_j\}_{j\in \N}$ where
$X_j$ is an open subset of $X_{j+1}$ for all $j\in \N$, and
  $\overline{PS}_{n}(\mathcal{X} \otimes \mathcal{Y})$ is the graded $\F[t]$-module whose degree-$k$ component is the sum
of vector spaces
\[
 S_n(X_k \times Y_0 ; \F)
\;+\;
S_n(X_{k-1} \times Y_1 ; \F)
\;+ \cdots +\;
S_n(X_1 \times Y_{k-1} ; \F)
\;+\;
S_n(X_0 \times Y_k ; \F),
\]
where each $S_n(X_{k- \ell} \times Y_\ell ; \F)$  is regarded as a linear subspace of $S_n\big((\mathcal{X}\otimes \mathcal{Y})_k ; \F\big)$.
The models in question are a family  $\mathcal{M} $ of objects from $\mathbf{FTop}_0\times \mathbf{FTop}_0$
satisfying certain properties (Lemmas \ref{FandEacyclic} and \ref{FandEfree}) implying
our Persistent Eilenberg-Zilber Theorem \ref{EZnew}.
We start by defining $\mathcal{M}$.

For $X\in \mathbf{Top}$,
let $\Sigma_e X \in \mathbf{FTop}_0$ be the filtered space with the empty set $\emptyset$ at indices $0 \leq i < e   $,  and $X$ for $i \geq e$:
\begin{align*}
\Sigma_e X: \ \emptyset   \subset \cdots \subset \emptyset \subset X \subset X \subset  \cdots
\end{align*}
Let $\Delta^p \subset \R^{p+1}$ be the standard geometric $p$-simplex, and let $\mathcal{M} $ be the set of objects  $(\Sigma_e \Delta^p, \Sigma_f \Delta^q)$  in $ \mathbf{FTop}_0 \times \mathbf{FTop}_0$ for $p,q ,e,f \in \N$.
We have the following:

\begin{lemma}\label{FandEacyclic}
	Each $M \in \mathcal{M}$ is both $E$-acyclic and $F$-acyclic.
That is, the homology groups $H_n(E(M))$ and $H_n(F(M))$ are trivial for all $n > 0$.
\end{lemma}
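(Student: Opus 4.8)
The plan is to reduce both acyclicity statements to the contractibility of $\Delta^p \times \Delta^q$, using Proposition \ref{intervaltensor} to carry along the graded $\F[t]$-structure and the classical Eilenberg--Zilber theorem (Theorem \ref{thm:ez}) to identify the relevant homology.

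First I would record the persistence chain complex of a single suspended simplex. Since $\Sigma_e \Delta^p$ is empty at indices below $e$ and equals $\Delta^p$ (with identity inclusions) from index $e$ on, its singular chains give $PS_n(\Sigma_e \Delta^p) = \bigoplus_{i \geq e} S_n(\Delta^p;\F) \cong \mathcal{I}_e \otimes_\F S_n(\Delta^p;\F)$ as graded $\F[t]$-modules, with differential $\mathrm{id}\otimes \partial$. In particular $PS_*(\Sigma_e\Delta^p)$ is a complex of \emph{free} graded $\F[t]$-modules, and likewise for $\Sigma_f\Delta^q$.

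For $E$-acyclicity, fix $M = (\Sigma_e \Delta^p, \Sigma_f \Delta^q)$ and expand $E(M) = PS_*(\Sigma_e\Delta^p)\otimes_{\F[t]} PS_*(\Sigma_f\Delta^q)$. Rearranging factors and applying $\mathcal{I}_e \otimes_{\F[t]} \mathcal{I}_f \cong \mathcal{I}_{e+f}$ (Proposition \ref{intervaltensor} with both right endpoints infinite) yields an isomorphism of chain complexes
\[
E(M) \;\cong\; \mathcal{I}_{e+f} \otimes_\F \big( S_*(\Delta^p;\F) \otimes_\F S_*(\Delta^q;\F) \big),
\]
where the differential acts only on the right-hand factor. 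Since $\mathcal{I}_{e+f}\otimes_\F -$ is exact over the field $\F$, it commutes with homology, so $H_n(E(M)) \cong \mathcal{I}_{e+f}\otimes_\F H_n\big(S_*(\Delta^p)\otimes_\F S_*(\Delta^q)\big)$, and Theorem \ref{thm:ez} identifies the latter homology with $H_n(\Delta^p \times \Delta^q;\F)$. As $\Delta^p \times \Delta^q$ is convex, hence contractible, this vanishes for $n > 0$.

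For $F$-acyclicity, I would first identify the tensor product filtration. The degree-$k$ space $(\Sigma_e\Delta^p \otimes \Sigma_f\Delta^q)_k = \bigcup_{i+j=k} X_i \times Y_j$ is empty when $k < e+f$ and equals $\Delta^p\times\Delta^q$ when $k \geq e+f$, so $\Sigma_e\Delta^p \otimes \Sigma_f\Delta^q = \Sigma_{e+f}(\Delta^p\times\Delta^q)$. Moreover, for $k \geq e+f$ each summand $S_n(X_{k-\ell}\times Y_\ell)$ appearing in the definition of $\overline{PS}_n$ already equals the full $S_n(\Delta^p\times\Delta^q)$, so the sum fills the entire singular chain group and $F(M) = \overline{PS}_*(\Sigma_{e+f}(\Delta^p\times\Delta^q)) \cong \mathcal{I}_{e+f}\otimes_\F S_*(\Delta^p\times\Delta^q;\F)$. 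Hence $H_n(F(M)) \cong \mathcal{I}_{e+f}\otimes_\F H_n(\Delta^p\times\Delta^q;\F) = 0$ for $n>0$. The computations are routine; the only points to watch are tracking the internal degree so that Proposition \ref{intervaltensor} applies verbatim, and checking that the subspace sum defining $\overline{PS}$ collapses to ordinary singular chains on a model---which it does precisely because on each model every nonempty product factor coincides with the total space at that filtration index. All genuine topological content is the contractibility of $\Delta^p\times\Delta^q$.
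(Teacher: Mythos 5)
Your proof is correct, and it splits into two halves that relate differently to the paper's argument. The $F$-acyclicity half is essentially the paper's: both identify $\Sigma_e\Delta^p \otimes \Sigma_f\Delta^q$ with $\Sigma_{e+f}(\Delta^p\times\Delta^q)$ and conclude by convexity of $\Delta^p\times\Delta^q$; you are in fact slightly more careful than the paper in justifying why $\overline{PS}_*$ collapses to $PS_*$ on models (each summand $S_n(X_{k-\ell}\times Y_\ell)$ is either zero or the whole chain group). The $E$-acyclicity half takes a genuinely different route. The paper applies the graded algebraic K\"unneth theorem (Theorem \ref{algkunnethpers}) with $C = PS_*(\Sigma_e\Delta^p)$ and $C' = PS_*(\Sigma_f\Delta^q)$: since $PH_n(\Sigma_e\Delta^p)=0$ for $n>0$ and $PH_0(\Sigma_e\Delta^p)\cong t^e\F[t]$ is free, every tensor and Tor term in the short exact sequence vanishes, forcing $H_n(E(M))=0$. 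You instead exploit that $PS_*(\Sigma_e\Delta^p)\cong \mathcal{I}_e\otimes_\F S_*(\Delta^p;\F)$ is free over $\F[t]$, rearrange via Proposition \ref{intervaltensor} to get $E(M)\cong \mathcal{I}_{e+f}\otimes_\F\bigl(S_*(\Delta^p;\F)\otimes_\F S_*(\Delta^q;\F)\bigr)$, and use exactness of $\mathcal{I}_{e+f}\otimes_\F(-)$ together with the classical Eilenberg--Zilber theorem (Theorem \ref{thm:ez}). What the paper's route buys is brevity, since Theorem \ref{algkunnethpers} is already available; what yours buys is a more hands-on, self-contained reduction that makes transparent that $E(M)$ is nothing but a degree-shifted copy of the classical tensor product complex of two simplices--- and it only needs the interval computation plus field-coefficient facts. (Minor remark: you could bypass Theorem \ref{thm:ez} entirely by applying the ordinary K\"unneth formula over the field $\F$ to $S_*(\Delta^p;\F)\otimes_\F S_*(\Delta^q;\F)$, since contractibility of each factor already kills all homology in positive degrees.)
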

\begin{proof}
If $ M = \left(\Sigma_e \Delta^p , \Sigma_f \Delta^q\right)$, then
\[
F\left(M \right)
=
\overline{PS}_*\left(\Sigma_e \Delta^p \otimes \Sigma_f \Delta^q\right)
\cong
PS_*(\Sigma_{e+ f}\Delta^p \times \Delta^q )
\] as  chain complexes of  graded $\F[t]$-modules.
Since  $\Delta^p \times \Delta^q$ is convex,
then
\[
H_n(F(M)) \cong PH_n(\Sigma_{e +f}\Delta^p \times \Delta^q) = 0 \] for all $n >0$, showing that $M$ is $F$-acyclic.

 To show that $M$ is $E$-acyclic,
notice that $PH_n\left(\Sigma_e \Delta^p\right) = 0$ for
$n > 0$, and that $PH_0\left(\Sigma_e \Delta^p \right) \cong t^e \F[t]$ is a free graded $\F[t]$-module.
By the graded algebraic K\"unneth Theorem \ref{algkunnethpers},
with $C = PS_*(\Sigma_e \Delta^p)$ and $C' = PS_*(\Sigma_f \Delta^q)$,
we get that
\[
H_n(E(M)) = H_n( PS_*(\Sigma_e \Delta^p) \otimes_{\F[t]} PS_*(\Sigma_f \Delta^q) ) = 0
\]
for all $n>0$, since the tensor products and Tor modules in the short exact sequence are all zero.
\end{proof}	
\begin{lemma}\label{FandEfree}
	For each $n\in \N$, the functors $F_n$ and $E_n$ are free with base in $\mathcal{ M }$.
\end{lemma}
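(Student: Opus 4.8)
The plan is to exhibit, for each homological degree $n$, explicit models in $\mathcal M$ together with representing elements whose pushforwards along morphisms form a free $\F[t]$-basis of $E_n$ (respectively $F_n$) on every object of $\mathbf{FTop}_0\times\mathbf{FTop}_0$; this is exactly the condition of being free with base in $\mathcal M$. The starting observation, valid in either case, is that a morphism $\Sigma_e\Delta^p\to\mathcal X$ in $\mathbf{FTop}_0$ is the same datum as a continuous map $\Delta^p\to X_e$. Thus $\mathrm{Hom}(\Sigma_e\Delta^p,\mathcal X)$ is naturally identified with the set $S_p(X_e)$ of singular $p$-simplices present at filtration level $e$, which is precisely the degree-$e$ part of the free graded $\F[t]$-module $PS_p(\mathcal X)$; pushing forward the fundamental simplex $\xi_p=\mathrm{id}_{\Delta^p}$ along the morphism associated to a simplex $\sigma$ recovers $\sigma$. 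I would first record this identification (and its analogue for pairs of filtered spaces), since it is what lets the acyclic-models formalism see singular simplices as images of model classes.

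For $E_n$ I would argue as follows. For any filtered space, $PS_i(\mathcal X)\cong\bigoplus_\sigma\mathcal I_{e_\sigma}$ is a free graded $\F[t]$-module, with one interval-module summand for each singular $i$-simplex $\sigma$, placed in the degree $e_\sigma$ equal to its birth. Applying Proposition \ref{intervaltensor}, namely $\mathcal I_{e}\otimes_{\F[t]}\mathcal I_{f}\cong\mathcal I_{e+f}$, term by term gives
\[
E_n(\mathcal X,\mathcal Y)=\bigoplus_{i+j=n}PS_i(\mathcal X)\otimes_{\F[t]}PS_j(\mathcal Y)\;\cong\;\bigoplus_{i+j=n}\;\bigoplus_{\sigma,\tau}\mathcal I_{e_\sigma+f_\tau},
\]
a free $\F[t]$-module with homogeneous basis $\{\sigma\otimes\tau\}$, the element $\sigma\otimes\tau$ sitting in degree $e_\sigma+f_\tau$. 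Each such generator is the image of $\xi_i\otimes\xi_j\in E_n(\Sigma_{e_\sigma}\Delta^i,\Sigma_{f_\tau}\Delta^j)$ under the morphism $(\sigma,\tau)$, so the models $(\Sigma_e\Delta^i,\Sigma_f\Delta^j)$ with representing elements $\xi_i\otimes\xi_j$, ranging over $i+j=n$, are the desired base.

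For $F_n$ I would use the identification established inside Lemma \ref{FandEacyclic}, that $\overline{PS}_*(\Sigma_e\Delta^p\otimes\Sigma_f\Delta^q)\cong PS_*(\Sigma_{e+f}(\Delta^p\times\Delta^q))$, which reduces the model computation to singular chains of a product. Here the representing element is the diagonal simplex $\delta_n=(\mathrm{id},\mathrm{id})\colon\Delta^n\to\Delta^n\times\Delta^n$, living in $F_n(\Sigma_a\Delta^n,\Sigma_b\Delta^n)$ in degree $a+b$. Since a singular $n$-simplex of $X_a\times Y_b$ is exactly a pair $(\gamma_1,\gamma_2)$ of maps $\Delta^n\to X_a$ and $\Delta^n\to Y_b$, that is, a morphism $(\Sigma_a\Delta^n,\Sigma_b\Delta^n)\to(\mathcal X,\mathcal Y)$, pushing $\delta_n$ forward along $(\gamma_1,\gamma_2)$ produces precisely that simplex, viewed inside $\overline{PS}_n(\mathcal X\otimes\mathcal Y)$ at birth degree $a+b$. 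As $\overline{PS}_n(\mathcal X\otimes\mathcal Y)$ is the free $\F[t]$-module on these product simplices, graded by their birth in the $\otimes$-filtration, the models $(\Sigma_a\Delta^n,\Sigma_b\Delta^n)$ with representing element $\delta_n$ form the base.

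The step I expect to be the main obstacle is checking that the proposed pushforwards constitute a genuine $\F[t]$-basis rather than merely a generating set. Because a simplex born at level $e$ also determines morphisms out of $\Sigma_{e'}\Delta^p$ for every $e'\ge e$, the same generator is reachable from several models up to powers of $t$, and one must verify that each basis element is accounted for exactly once, in the single internal degree equal to its birth. This is precisely the bookkeeping controlled by the interval-module arithmetic of Proposition \ref{intervaltensor} together with the identification $\mathrm{Hom}(\Sigma_e\Delta^p,-)\cong(PS_p(-))_e$: organizing the generators by birth level and invoking the classical freeness of singular chains degreewise is what I expect to turn the graded generating set into a basis. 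Once freeness is in place, Lemma \ref{FandEacyclic} supplies acyclicity on $\mathcal M$, and the two lemmas together feed the acyclic-models machinery to yield the persistent Eilenberg--Zilber Theorem \ref{EZnew}.
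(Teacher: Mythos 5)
Your proposal is correct and follows essentially the same route as the paper: the same models $(\Sigma_e\Delta^p,\Sigma_f\Delta^q)$, the same representing elements (the diagonal simplex $d$ for $F_n$, the tensor of identity simplices for $E_n$), and the same identification of the free $\F[t]$-basis as singular simplices placed at their birth degrees, hit by pushforwards from the models. The only difference is presentational: where you assert the birth-level decomposition of $\overline{PS}_n(\mathcal{X}\otimes\mathcal{Y})$ directly and flag the once-per-birth bookkeeping, the paper produces the same basis by an inductive completion (via Zorn's lemma), extending $t\mathcal{A}_{k-1}$ to a degree-$k$ basis $\mathcal{A}_k$ and setting $\mathcal{S}_k=\mathcal{A}_k\smallsetminus t\mathcal{A}_{k-1}$.
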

\begin{proof}
Let $n\in \N$. The functor $F_n$ (resp. $E_n$) being free with base in $\mathcal{M}$ means that
    for all $(\mathcal{X,Y})\in \mathbf{FTop}_0\times \mathbf{FTop}_0$, the graded $\F[t]$-module
    $F_n(\mathcal{X,Y})  $ (resp. $E_n(\mathcal{X,Y})$) is freely generated over
    $\F[t]$ by some subset of
\[
\bigcup_{\substack{M \in \mathcal{M} \\ u: M \rightarrow (\mathcal{X,Y})} }
\mathsf{Img}
\Big( F_n(u): F_n(M) \longrightarrow F_n(\mathcal{X,Y})\Big)
\]
    where $u$ ranges over all morphisms in $\mathbf{FTop}_0 \times \mathbf{FTop}_0$ from $M\in \mathcal{M}$ to $(\mathcal{X,Y})$.

Starting with $F$, we will first construct a free  $\F[t]$-basis for
\[
F_n(\mathcal{X,Y}) = \overline{PS}_n(\mathcal{X\otimes Y}) = \bigoplus_{k\in \N} \sum_{i + j = k} S_n(X_i \times Y_j)
\]
consisting of singular $n$-simplices on the spaces $X_i\times Y_j$.
We proceed inductively as follows:
Let $\mathcal{S}_0$ be the collection of all singular $n$-simplices
$ \Delta^n \rightarrow X_0 \times Y_0$, and assume
that for a fixed $k\geq 1$ and every $0 \leq \ell < k $ we have constructed
a set $\mathcal{S}_\ell$ of simplices $\Delta^n \rightarrow X_p\times Y_q$, $p + q = \ell$,
so that
\[
\mathcal{A}_{k-1} \;=\;
t^{k-1}\mathcal{S}_0
\;\cup\;
t^{k-2}\mathcal{S}_1
\;\cup\;
\cdots
\;\cup\;
t \mathcal{S}_{k-2}
\;\cup\;
\mathcal{S}_{k-1}
\]
is a basis, over $\F$, for
\[
\overline{PS}_n(\mathcal{X\otimes Y})_{k-1} = \sum\limits_{p+q = k-1} S_n(X_p \times X_q).
\]

Since $\overline{PS}_n(\mathcal{X\otimes Y})$
is an $\F[t]$-submodule of  $PS_n(\mathcal{X\otimes Y})$ and the latter is torsion free,
then   $t\mathcal{A}_{k-1}$ is an $\F$-linearly independent subset of
$\overline{PS}_n(\mathcal{X\otimes Y})_{k}$.
We claim that $t\mathcal{A}_{k-1}$ can be completed to
an $\F$-basis $\mathcal{A}_k$ of $\overline{PS}_n(\mathcal{X\otimes Y})_{k}$,
consisting of singular $n$-simplices $\Delta^n \rightarrow X_i \times Y_j$, $i +j =k$.
Indeed, if $\mathscr{C}$ is the collection of all   sets $C$ of
simplices $\Delta^n \rightarrow X_i\times Y_j$, $i + j = k$, so that
$C$ is $\F$-linearly independent in $\overline{PS}_n(\mathcal{X\otimes Y})_{k}$ and
 $t \mathcal{A}_{k-1} \subset C$, then
$\mathscr{C}$ can be ordered by inclusion, and any chain in $\mathscr{C}$ is bounded  above by its
union. By Zorn's lemma $\mathscr{C}$ has a maximal element  $\mathcal{A}_k$,   which  yields the desired basis.
If we let $\mathcal{S}_k  = \mathcal{A}_k \smallsetminus t\mathcal{A}_{k-1}$, then
\[
\mathcal{S} := \bigcup_{k\in \N} \mathcal{S}_k
\]
is a free $\F[t]$-basis for $\overline{PS}_n(\mathcal{X\otimes Y})$.

We claim that each
 $\sigma \in \mathcal{S}$ is in the image of $F_n(u)$ for some model $M\in \mathcal{M}$ and some morphism $u : M \rightarrow (\mathcal{X,Y})$
in $\mathbf{FTop}_0^2$.
Indeed, since $\sigma$ is of the form $\sigma : \Delta^n \rightarrow X_i \times Y_j$ for some $i,j \in \N$,
then there are unique
singular simplices $\sigma^X_i : \Delta^n \rightarrow X_i$ and $\sigma^Y_j : \Delta^n \rightarrow Y_j$,
so that if
 $d: \Delta^n \rightarrow \Delta^n \times \Delta^n$ is the diagonal map $d(a) = (a,a)$,
then $\sigma = \left(\sigma_i^X , \sigma_j^Y\right)\circ d$.
Let $\Sigma \sigma_i^X : \Sigma_i \Delta^n \rightarrow \mathcal{X}$ be the morphism in $\mathbf{FTop}_0$
defined as the inclusion $\emptyset \hookrightarrow X_\ell$ for $\ell < i$, and as
the composition   $\Delta^n \xrightarrow{\sigma_i^X} X_i \hookrightarrow X_\ell$ for $\ell \geq i$.
Define  $\Sigma \sigma_j^Y: \Sigma_j \Delta^n \rightarrow \mathcal{Y}$ in a similar fashion.
Then,
$
\left(\Sigma \sigma_i^X, \Sigma \sigma_j^Y\right):
\left(\Sigma_i \Delta^n , \Sigma_j \Delta^n\right)
\longrightarrow
(\mathcal{X,Y})$ is a morphism in $\mathbf{FTop}_0^2$,
and  if
\[
d_{i,j} \in S_n\big((\Sigma_i \Delta^n)_i \times (\Sigma_j\Delta^n)_j\big)
= S_n(\Delta^n \times \Delta^n)
\]
is the singular $n$-simplex corresponding to the diagonal map $d$,
then
\[
F_n\left(\Sigma \sigma_i^X , \Sigma \sigma_j^Y\right) (d_{i,j}) = \sigma.
\]

As for the freeness of
\[
E_n(\mathcal{X,Y}) = \bigoplus_{p + q =n} PS_p(\mathcal{X})\otimes_{\F[t]} PS_q(\mathcal{Y}), 
\]
we note that each direct summand $PS_p(\mathcal{X})\otimes_{\F[t]} PS_q(\mathcal{Y})$ is freely generated over $\F[t]$
by elements which can be written as the tensor product of a homogeneous element from $PS_p(\mathcal{X})$ and a homogeneous element
from $PS_q(\mathcal{Y})$.
That is, by elements of the form $\sigma_p^X \otimes_{\F[t]} \sigma_q^Y$,
where $\sigma_p^X : \Delta^p \rightarrow X_e$ and
 $\sigma_q^Y : \Delta^q \rightarrow Y_f$ are singular simplices.
We let $\Sigma_e\sigma_p^X : \Sigma_e\Delta^p \rightarrow \mathcal{X}$ and
 $\Sigma_f\sigma_q^Y : \Sigma_f\Delta^q \longrightarrow \mathcal{Y}$ be defined
as in the analysis of $F_n$ above.
Let $i_p \in PS_p(\Sigma_e \Delta^p)$ be the homogeneous element
of degree $e$ corresponding to the identity of $\Delta^p$, and define $i_q \in PS_q(\Sigma_f \Delta^q)$  in a similar fashion.
Then,
$i_p \otimes_{\F[t]} i_q \in E_n(\Sigma_e \Delta^p , \Sigma_f \Delta^q)$ and
\[
\sigma_p^X \otimes_{\F[t]} \sigma_q^Y \; =\;
 E_n\left(\Sigma_e \sigma^X_p , \Sigma_f \sigma^Y_q\right) (i_p \otimes_{\F[t]} i_q),
\]
thus completing the proof.
\end{proof}	
\begin{lemma}\label{HFEequivalence}
	The functors  $H_0F $ and $  H_0E$ are naturally equivalent.
\end{lemma}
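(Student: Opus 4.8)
The plan is to build the natural equivalence $H_0F \cong H_0E$ directly out of the degree-zero part of the (persistent) Eilenberg--Zilber map, where the shuffle map collapses to the elementary identification of a pair of points with a single point of the product. Since $E_0(\mathcal{X},\mathcal{Y}) = PS_0(\mathcal{X}) \otimes_{\F[t]} PS_0(\mathcal{Y})$ and $F_0(\mathcal{X},\mathcal{Y}) = \overline{PS}_0(\mathcal{X}\otimes\mathcal{Y}) = \bigoplus_k \sum_{i+j=k} S_0(X_i \times Y_j)$, I would define a graded $\F[t]$-homomorphism
\[
\Phi : E_0(\mathcal{X},\mathcal{Y}) \longrightarrow F_0(\mathcal{X},\mathcal{Y}), \qquad x \otimes_{\F[t]} y \longmapsto (x,y),
\]
sending the tensor of a point $x \in X_i$ and a point $y \in Y_j$ to the point $(x,y) \in X_i \times Y_j \subset (\mathcal{X}\otimes\mathcal{Y})_{i+j}$.

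First I would check that $\Phi$ is a well-defined natural isomorphism of graded $\F[t]$-modules. Well-definedness amounts to respecting the sliding relation $t\cdot(x \otimes y) = (tx)\otimes y = x \otimes (ty)$: since all internal maps are inclusions, the point $(x,y)$ lies simultaneously in $X_{i+1}\times Y_j$ and $X_i \times Y_{j+1}$, so both $\Phi\big((tx)\otimes y\big)$ and $\Phi\big(x\otimes(ty)\big)$ equal the same element of the sum $\sum_{i'+j'=i+j+1} S_0(X_{i'}\times Y_{j'})$; in particular $\Phi$ is $\F[t]$-linear. An inverse $\Psi : (x,y) \mapsto x \otimes_{\F[t]} y$ is defined by the same nesting argument: if a point is recorded in two pieces $X_i\times Y_j$ and $X_{i'}\times Y_{j'}$ with $i+j=i'+j'$ and $i'>i$, then the relation $t^{\,i'-i}(x\otimes y) = x \otimes (t^{\,i'-i}y)$ shows the two candidate preimages coincide in $E_0$. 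Thus $\Phi$ is the persistent analogue of the classical isomorphism $S_0(X)\otimes_\F S_0(Y) \cong S_0(X\times Y)$, and naturality in $(\mathcal{X},\mathcal{Y}) \in \mathbf{FTop}_0\times\mathbf{FTop}_0$ is immediate from the correspondence $\phi(x)\otimes\psi(y) \leftrightarrow (\phi(x),\psi(y))$.

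The remaining step is to show that $\Phi$ descends to homology, i.e. that it carries $\mathsf{Img}(\partial_1^E)$ onto $\mathsf{Img}(\partial_1^F)$. For the forward inclusion, a generator $\partial_1^E(\alpha \otimes y) = \partial\alpha \otimes y$, with $\alpha$ a singular $1$-simplex in some $X_i$ and $y$ a point of $Y_j$, is sent by $\Phi$ to $(\alpha(1),y) - (\alpha(0),y)$, which is exactly $\partial_1^F$ of the $1$-simplex $s \mapsto (\alpha(s),y)$ lying in the single piece $X_i\times Y_j$; the symmetric summand is handled identically. For the reverse inclusion, a generator of $\mathsf{Img}(\partial_1^F)$ is $\partial_1^F\tau$ for a $1$-simplex $\tau = (\tau^X,\tau^Y)$ supported in one piece $X_i\times Y_j$, and the telescoping identity
\[
\tau^X(1)\otimes\tau^Y(1) - \tau^X(0)\otimes\tau^Y(0)
=
\partial_1^E\big(\tau^X \otimes \tau^Y(1) + \tau^X(0) \otimes \tau^Y\big)
\]
in $E_0$ shows $\partial_1^F\tau = \Phi\big(\partial_1^E(\tau^X \otimes \tau^Y(1) + \tau^X(0) \otimes \tau^Y)\big) \in \Phi\big(\mathsf{Img}(\partial_1^E)\big)$. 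Hence $\Phi\big(\mathsf{Img}(\partial_1^E)\big) = \mathsf{Img}(\partial_1^F)$, and since $\Phi$ is an isomorphism it descends to a natural isomorphism $H_0E \xrightarrow{\cong} H_0F$. Together with Lemmas \ref{FandEacyclic} and \ref{FandEfree} this supplies the base case for the acyclic models argument behind Theorem \ref{EZnew}.

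The one genuine subtlety is that the whole argument is bookkeeping about the graded $\F[t]$-structure and the identifications forced by the union $\bigcup_{i+j=k} X_i\times Y_j$: getting $\Phi$ and $\Psi$ well-defined at the persistent level is where care is needed, whereas matching the two images of $\partial_1$ is routine once the degree-$0$ isomorphism is in hand.
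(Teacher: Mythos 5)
Your proof is correct, but it takes a genuinely different route from the paper's. The paper never works at the chain level: it computes $H_0E$ via the graded algebraic K\"unneth formula (Theorem \ref{algkunnethpers}), identifies $H_0F \cong PH_0(\mathcal{X}\otimes\mathcal{Y})$ using the small-simplices theorem (which is where the openness hypothesis built into $\mathbf{FTop}_0$ enters), and then constructs the isomorphism $PH_0(\mathcal{X})\otimes_{\F[t]}PH_0(\mathcal{Y}) \to PH_0(\mathcal{X}\otimes\mathcal{Y})$ geometrically, sending a tensor of path components to the component containing their product; verifying that the kernel is exactly the graded-tensor relation subspace $J_k$ requires a path-splitting argument via the Lebesgue number lemma, again using openness. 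You instead build an explicit graded $\F[t]$-isomorphism $E_0 \cong F_0$ on points, $x\otimes y \leftrightarrow (x,y)$ --- well-defined precisely because $\overline{PS}_0(\mathcal{X}\otimes\mathcal{Y})_k$ is an internal sum (not a direct sum) inside $S_0\big((\mathcal{X}\otimes\mathcal{Y})_k\big)$, so the sliding relation collapses to a literal equality of $0$-simplices --- and then match $\mathsf{Img}(\partial_1^E)$ with $\mathsf{Img}(\partial_1^F)$ by the telescoping identity, which I checked is correct including signs (the sign term dies since $\partial$ vanishes on $0$-chains). What your approach buys: it is more elementary (no K\"unneth input, no small-simplices theorem, no point-set topology), it yields a chain-level isomorphism in degree $0$ rather than only one on homology, and it never uses the openness of the filtration, so it proves the statement for the analogous functors on all of $\mathbf{FTop}$. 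What the paper's approach buys: the intermediate identification $H_0F \cong PH_0(\mathcal{X}\otimes\mathcal{Y})$ and the path-component description, which are reused downstream (Theorem \ref{thm:PersistentEZ}). Importantly, the two constructions induce the \emph{same} natural equivalence --- both send the class of $x\otimes y$ to the class of $(x,y)$ --- so your lemma can be substituted into the acyclic-models argument of Lemma \ref{EZnew} (where the equivalence $\Psi$ is prescribed on $H_0$) without any change.
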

\begin{proof}
Given $(\mathcal{ X },\mathcal{ Y }) \in \mathbf{FTop}_0 \times \mathbf{FTop}_0$,
our goal is to define a natural isomorphism
\[
\Psi:
H_0\left(PS_*(\mathcal{X}) \otimes_{\F[t]} PS_*(\mathcal{Y})\right)
\longrightarrow
H_0\left(\overline{PS}_*(\mathcal{X\otimes Y})\right).
\]
The first thing to note is that  the graded algebraic K\"unneth formula (Theorem \ref{algkunnethpers})
yields a natural isomorphism
\[
H_0\left(PS_*(\mathcal{X}) \otimes_{\F[t]} PS_*(\mathcal{Y})\right)
\cong
PH_0(\mathcal{X}) \otimes_{\F[t]} PH_0(\mathcal{Y}).
\]
Moreover, since each $X_{k - \ell}\times Y_\ell$  is open in $(\mathcal{X\otimes Y})_k$,   then the inclusion
\[
\sum_{i+j = k} S_n(X_i \times Y_j) \hookrightarrow S_n((\mathcal{X\otimes Y})_k)
\]
is a chain homotopy equivalence (see  \cite[Proposition 2.21]{hatcher}),
and thus
\[
H_n\left(\overline{PS}_*(\mathcal{X\otimes Y})\right)
\cong PH_n(\mathcal{X\otimes Y}).
\]
Therefore, it is enough to construct a natural isomorphism
\[
\Psi: PH_0(\mathcal{X})\otimes_{\F[t]}PH_0(\mathcal{Y})
\longrightarrow
PH_0(\mathcal{X\otimes Y})
\]
and we will do so on each degree $k$ as follows.
Recall that (see Proposition \ref{gradedtensor})
\[
\left( PH_0(\mathcal{X})\otimes_{\F[t]}PH_0(\mathcal{Y}) \right)_k
\cong
\left( \bigoplus_{i+j = k} H_0(X_i) \otimes_{\F} H_0(Y_j) \right)\Big/J_k
\]
where $J_k$ is the linear subspace  of
$\bigoplus\limits_{i+j = k} H_0(X_i) \otimes_{\F} H_0(Y_j)$
generated by elements of the form\;
$(t^\ell a) \otimes_{\F} b - a \otimes_{\F} (t^\ell b)$\;
 with \;  $\mathsf{deg}(a) + \mathsf{deg}(b) + \ell = k$.
Let
\[
\begin{array}{rrcl}
 \psi_k : & \bigoplus\limits_{i+j =k } H_0(X_i) \otimes_{\F} H_0(Y_j) & \longrightarrow & H_0((\mathcal{X\otimes Y})_k) \\
&\Gamma_i^X \otimes_\F \Gamma_j^Y& \mapsto & \Gamma
\end{array}
\]
where $\Gamma_i^X$  (resp. $\Gamma_j^Y$) is a path-connected component of $X_i$ (resp. $Y_j$),  $i +j = k$,
and $\Gamma $ is the unique path-connected component of $(\mathcal{X\otimes Y})_k$ containing $\Gamma_i^X \times \Gamma_j^Y$.
This definition on basis elements uniquely determines $\psi_k$ as an $\F$-linear map, and we claim
that it is surjective with $\mathsf{ker}(\psi_k) = J_k$.
Surjectivity follows from observing that if $(x,y) \in \Gamma$, then
$(x,y) \in X_i \times Y_j$ for some $i+j = k$, and that if
$\Gamma_i^X$ and $\Gamma_j^Y$ are the path-connected components in $X_i$ and $Y_j$, respectively,
so that $(x,y) \in \Gamma_i^X \times \Gamma_j^Y$,
then  path-connectedness of $\Gamma_i^X \times \Gamma_j^Y $ and maximality of $\Gamma$ imply
$\Gamma_i^X \times \Gamma_j^Y \subset \Gamma$.

The inclusion $J_k \subset \mathsf{ker}(\psi_k)$ is immediate, since it holds true
for elements of the form
$\left(t^\ell \Gamma_{k - j}^X \otimes_\F \Gamma_{j-\ell}^Y - \Gamma_{k - j}^X \otimes_\F t^\ell \Gamma_{j-\ell}^Y\right) $,
and these generate $J_k$ over $\F$.
In order to show that $\mathsf{ker}(\psi_k) \subset  J_k$, we will
first establish the following: \\

\noindent \textbf{\underline{Claim 1}:}
 If $\Gamma_i^X \times \Gamma_j^Y \subset X_i\times Y_j$
and $\Gamma_p^X \times \Gamma_q^Y \subset X_p\times Y_q$  are path-connected components, $p + q = i + j = k$,
such that $\psi_k\left(\Gamma_i^X \otimes_\F \Gamma_j^Y\right) = \psi_k\left(\Gamma_p^X \otimes_\F \Gamma_q^Y\right)$,
then
\[
\left(
\Gamma_i^X \otimes_\F \Gamma_j^Y - \Gamma_p^X \otimes_\F \Gamma_q^Y \right)
\in J_k.
\]
Indeed, let
$
\gamma : [0,1] \longrightarrow (\mathcal{ X } \otimes \mathcal{ Y })_k
$ be a path
with
$\gamma(0)  \in \Gamma^X _i \times \Gamma^Y_j$
and
$\gamma(1)   \in \Gamma^X _p \times \Gamma^Y_q$.
Since each $X_{k -\ell} \times Y_{\ell}$ is open in
$(\mathcal{ X } \otimes \mathcal{ Y })_k$,
then   the Lebesgue's number lemma
yields  a partition
 $0 = r_0 < r_1 < \dots < r_N = 1$
such that
$\gamma\left([r_n, r_{n+1}]\right) \subset X_{i_n} \times Y_{j_n} $
and $i_n + j_n = k$ for all $0 \leq n < N$.
Let
$(x_n,y_n) =
 \gamma\left(r_n\right)$,
and let
$  \Gamma^X _{i_{n}} \times \Gamma^Y_{j_{n}}$
be its path-connected component in $X_{i_n}\times Y_{j_n}$.
Since $(i,j)= (i_0 , j_0)$ and
$(p,q) = (i_N , j_N)$, then it is
enough to show that
\[
\left(
\Gamma_{i_n}^X \otimes_\F \Gamma_{j_n}^Y -
\Gamma_{i_{n+1}}^X \otimes_\F \Gamma_{j_{n+1}}^Y
\right)
\in J_k
\;\;\;\;\mbox{ for all }\;\;\;\;
n=0, \ldots, N-1.
\]
Fix $0 \leq n < N$, assume that $i_n \leq i_{n+1}$ , and let $\ell = i_{n+1} - i_n = j_n - j_{n+1}$ (the argument is similar if $i_n \geq i_{n+1}$).
Since the projection
$\pi^\mathcal{ X }\left(\gamma\left([r_n, r_{n+1}]\right)\right) \subset X_{i_n}$
is a path in $X_{i_n}$  from $x_n$ to $x_{n+1}$, then  $t^{\ell}\Gamma^X _{i_n} = \Gamma^X _{i_{n+1}}$
in $H_0(X_{i_{n+1}})$.
Similarly, the projection
$\pi^\mathcal{ Y }\left(\gamma\left([r_n, r_{n+1}]\right)\right) \subset Y_{j_n}$
is a path in $Y_{j_n}$ from $y_n$ to $y_{n+1}$, and since $j_{n+1} \leq j_n$, then
$t^{\ell}\Gamma^Y_{j_{n+1}} = \Gamma^Y_{j_{n}}$ in $H_0(Y_{j_n})$.
This calculation shows that
\[
\left(
\Gamma^X _{i_n} \otimes_{\F} \Gamma^Y_{j_n}
-
 \Gamma^X _{i_{n+1}} \otimes_{\F} \Gamma^Y_{j_{n+1}}
\right)
=
\left(
 \Gamma^X _{i_n} \otimes_{\F} t^{\ell}\Gamma^Y_{j_{n+1}}
-
  t^{\ell} \Gamma^X _{i_n} \otimes_{\F} \Gamma^Y_{j_{n+1}}
\right)
\in J_k
\]
and therefore $\left(
\Gamma^X _{i} \otimes_{\F} \Gamma^Y_{j}
-
 \Gamma^X _{p} \otimes_{\F} \Gamma^Y_{q}
\right)\in J_k$. (\emph{end of Claim 1})

Let us now show that $\mathsf{ker}(\psi_k) \subset J_k$.
To this end, let   $c\in \mathsf{ker}(\psi_k)$ and write
\[
c = \sum_{n =1 }^N c_n \cdot \left(\Gamma_{i_n}^X \otimes_{\F} \Gamma_{j_n}^Y\right)
\]
where $c_n \in \F$, and  where the indices $(i_n , j_n)$ have been ordered such that there are
integers $0 = n_0 < n_1 < \cdots < n_D = N $, and distinct
path-connected components $\Gamma_1, \ldots, \Gamma_D$ of $(\mathcal{X\otimes Y})_k$
so that if $n_{d-1} < n \leq n_d$, then
$\psi_k\left(\Gamma_{i_n}^X \otimes_\F \Gamma_{j_n}^Y\right) = \Gamma_d$  for all $d = 1 , \ldots, D$.
Since the $\Gamma_d$'s are linearly independent in $H_0((\mathcal{X\otimes Y})_k)$, then
$\psi_k(c) = 0$ implies
\[
c_{n_d} =
\sum_{n = 1 + n_{d-1}}^{n_d - 1} -c_n  \;\;\;\; , \;\;\; d = 1,\ldots, D
\]
and therefore
\[
c = \sum_{d = 1}^D \;\sum_{n = 1+ n_{d-1}}^{n_d - 1} c_n\cdot
\left(
\Gamma^X_{i_n}\otimes_\F \Gamma^Y_{j_n} - \Gamma_{i_{n_d}}^X\otimes_\F \Gamma_{j_{n_d}}^Y
\right)
\]
where each summand is an element of $J_k$, by Claim 1 above.
Thus, $\mathsf{ker}(\psi_k) = J_k$, $\psi_k$ induces a natural $\F$-isomorphism
\[
\Psi_k : \left( PH_0(\mathcal{X})\otimes_{\F[t]}PH_0(\mathcal{Y}) \right)_k
\longrightarrow H_0((\mathcal{X\otimes Y})_k)\]
and letting $\Psi = \Psi_0 \oplus \Psi_1 \oplus \cdots$ completes the proof.
\end{proof}	
We now move onto the main results of this section.

\begin{lemma}[Persistent Eilenberg-Zilber]\label{EZnew}
	For objects $\mathcal{X}, \mathcal{Y} \in \mathbf{FTop}_0$ and coefficients in a field $\mathbb{F}$, there is a natural graded chain equivalence
\[
\zeta:
PS_*(\mathcal{X}) \otimes_{\F[t]} PS_*(\mathcal{Y})
\longrightarrow
\overline{PS}_*(\mathcal{X\otimes Y})
\]
	with $H_0(\zeta) = \Psi$ (Lemma \ref{HFEequivalence}), and  unique up to a graded chain homotopy.
\end{lemma}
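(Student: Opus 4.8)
The plan is to invoke the acyclic models theorem of Eilenberg--MacLane \cite{acyclic} in the setting of the two functors $E, F : \mathbf{FTop}_0 \times \mathbf{FTop}_0 \longrightarrow \mathbf{gCh}_{\F[t]}$ defined above. All the hypotheses needed for that theorem have, in fact, already been verified in the preceding lemmas. By Lemma \ref{FandEfree}, each $E_n$ and each $F_n$ is free with base in the model set $\mathcal{M} = \{(\Sigma_e \Delta^p, \Sigma_f \Delta^q)\}$; by Lemma \ref{FandEacyclic}, every $M \in \mathcal{M}$ is both $E$-acyclic and $F$-acyclic, so $H_n(E(M)) = H_n(F(M)) = 0$ for $n > 0$; and by Lemma \ref{HFEequivalence}, we have a natural isomorphism $\Psi : H_0 E \longrightarrow H_0 F$. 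This is exactly the input the acyclic models theorem requires.

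**Constructing the chain equivalence.**
First I would apply the acyclic models theorem to produce a natural chain map $\zeta : E \longrightarrow F$, i.e.
\[
\zeta : PS_*(\mathcal{X}) \otimes_{\F[t]} PS_*(\mathcal{Y}) \longrightarrow \overline{PS}_*(\mathcal{X \otimes Y}),
\]
which induces $\Psi$ on $H_0$ and is unique up to natural chain homotopy. The one point that must be checked carefully is that the standard acyclic-models construction (defining $\zeta_n$ inductively on free generators and lifting through acyclicity) is carried out in the graded category $\mathbf{gCh}_{\F[t]}$ rather than in ordinary chain complexes. Here freeness over $\F[t]$ is what lets one define $\zeta_n$ on $\F[t]$-basis elements coming from the models, and $F$-acyclicity guarantees the required lifts exist; because all the generators of $E_n$ sit in prescribed degrees (Lemma \ref{FandEfree}), the resulting $\zeta$ is automatically degree-preserving, hence a genuine morphism of graded $\F[t]$-complexes. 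By symmetry—swapping the roles of $E$ and $F$, using that $F_n$ is also free with base in $\mathcal{M}$ and that the models are $E$-acyclic—the theorem produces a natural chain map $\xi : F \longrightarrow E$ inducing $\Psi^{-1}$ on $H_0$.

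**Showing $\zeta$ is an equivalence and concluding.**
To see that $\zeta$ is a chain equivalence, I would compare the two composites $\xi \circ \zeta$ and $\zeta \circ \xi$ with the respective identity natural transformations. Both $\xi \circ \zeta$ and $\mathrm{id}_E$ are natural chain maps $E \to E$ inducing the same map on $H_0$ (namely $\Psi^{-1} \circ \Psi = \mathrm{id}$), so the uniqueness-up-to-natural-homotopy clause of the acyclic models theorem forces $\xi \circ \zeta \simeq \mathrm{id}_E$ via a natural graded chain homotopy; the identical argument gives $\zeta \circ \xi \simeq \mathrm{id}_F$. Thus $\zeta$ is a natural graded chain equivalence, and $H_0(\zeta) = \Psi$ by construction.

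**The main obstacle.**
The genuinely substantive work lies not in this final argument—which is a formal consequence of the three preceding lemmas—but in ensuring that the acyclic models apparatus functions correctly over the graded ring $\F[t]$. The classical statement is phrased for modules over a single ring with no grading, so the main care is to confirm that \emph{freeness with base in $\mathcal{M}$} and \emph{acyclicity} are the correct graded analogues and that the inductive lifting preserves degree; this is precisely why Lemmas \ref{FandEacyclic} and \ref{FandEfree} were established with their specific homogeneity bookkeeping. Granting those, the uniqueness clause of acyclic models upgrades the existence of $\zeta$ to an equivalence with essentially no further computation.
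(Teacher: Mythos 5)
Your proposal is correct and follows essentially the same route as the paper: the paper's proof simply invokes the Eilenberg--MacLane acyclic models theorem, citing Lemmas \ref{FandEacyclic}, \ref{FandEfree}, and \ref{HFEequivalence} as the required hypotheses, which is exactly the argument you spell out (including the two-way maps, the homotopy-uniqueness step, and the graded bookkeeping). Your explicit attention to why the construction stays degree-preserving over $\F[t]$ is a detail the paper leaves implicit, but it is the same proof.
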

\begin{proof}
The proof follows exactly that of Theorem \ref{thm:ez} \cite{EZ}, using acyclic models \cite{acyclic}
(see also the proof of Theorem 5.3 in \cite{vick2012homology} for a more direct comparison).
Indeed, the functors in question are acyclic (Lemma \ref{FandEacyclic}),  free  (Lemma \ref{FandEfree}), and naturally equivalent at the level of zero-th homology
(Lemma \ref{HFEequivalence}).
\end{proof}

\begin{theorem}\label{thm:PersistentEZ}
 If $\mathcal{X,Y} \in \mathbf{Top^N}$, then there is a natural $\F[t]$-isomorphism
\[
H_n\left(
PS_*(\mathcal{T(X)}) \otimes_{\F[t]} PS_*(\mathcal{T(Y)})
\right)
\;\cong\;
PH_n(\mathcal{X} \otimes_\mathbf{g} \mathcal{Y}).
\]
\end{theorem}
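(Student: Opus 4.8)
The plan is to reduce the statement to the Persistent Eilenberg--Zilber Lemma \ref{EZnew}. The obstruction to applying it directly is that this lemma requires its inputs to lie in $\mathbf{FTop}_0$ (open inclusions), whereas the telescopes $\mathcal{T(X)}$ and $\mathcal{T(Y)}$ are built from \emph{closed} cofibrations (Remark \ref{telecofibration}) and so are not objects of $\mathbf{FTop}_0$. To get around this I would replace each telescope by its open thickening $\mathcal{T}^\epsilon(\mathcal{X})$, $\mathcal{T}^\epsilon(\mathcal{Y})$ from Proposition \ref{prop:defretract}, for a fixed $0 < \epsilon < 1$; these \emph{do} lie in $\mathbf{FTop}_0$ because each $\mathcal{T}^\epsilon_j$ is open in $\mathcal{T}^\epsilon_{j+1}$. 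On the target side, the Corollary following Lemma \ref{lemma:tensorandgentensor} already identifies $PH_n(\mathcal{X}\otimes_\mathbf{g}\mathcal{Y})$ with $PH_n(\mathcal{T(X)}\otimes \mathcal{T(Y)})$, so it is enough for me to construct a natural isomorphism between $H_n(PS_*(\mathcal{T(X)}) \otimes_{\F[t]} PS_*(\mathcal{T(Y)}))$ and $PH_n(\mathcal{T(X)}\otimes \mathcal{T(Y)})$.

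The core of the argument would thread through the thickenings. First I would apply Lemma \ref{EZnew} to $\mathcal{T}^\epsilon(\mathcal{X}),\mathcal{T}^\epsilon(\mathcal{Y}) \in \mathbf{FTop}_0$ and then invoke the openness computation embedded in Lemma \ref{HFEequivalence} --- namely that each $X_{k-\ell}\times Y_\ell$ is open in the degree-$k$ piece, so $H_n(\overline{PS}_*(-)) \cong PH_n(-)$ --- to obtain
\[
H_n\big(PS_*(\mathcal{T}^\epsilon(\mathcal{X})) \otimes_{\F[t]} PS_*(\mathcal{T}^\epsilon(\mathcal{Y}))\big)
\cong
PH_n\big(\mathcal{T}^\epsilon(\mathcal{X}) \otimes \mathcal{T}^\epsilon(\mathcal{Y})\big).
\]
Next, Proposition \ref{prop:defretract} tells me that the inclusion $(\mathcal{T(X)}\otimes \mathcal{T(Y)})_k \hookrightarrow (\mathcal{T}^\epsilon(\mathcal{X}) \otimes \mathcal{T}^\epsilon(\mathcal{Y}))_k$ is a levelwise deformation retract, hence a levelwise homotopy equivalence of $\NN$-diagrams, which yields $PH_n(\mathcal{T}^\epsilon(\mathcal{X}) \otimes \mathcal{T}^\epsilon(\mathcal{Y})) \cong PH_n(\mathcal{T(X)}\otimes \mathcal{T(Y)})$.

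To close the loop I would compare the two left-hand tensor products. The levelwise deformation retraction $\mathcal{T}^\epsilon_j(\mathcal{X}) \to \mathcal{T}_j(\mathcal{X})$ makes the inclusion $\mathcal{T(X)} \hookrightarrow \mathcal{T}^\epsilon(\mathcal{X})$ a levelwise homotopy equivalence, so $PH_i(\mathcal{T(X)}) \cong PH_i(\mathcal{T}^\epsilon(\mathcal{X}))$ as graded $\F[t]$-modules, and similarly for $\mathcal{Y}$. Because $PS_*$ of a filtered space consists of free, hence flat, graded $\F[t]$-modules, the inclusion-induced map $PS_*(\mathcal{T(X)}) \otimes_{\F[t]} PS_*(\mathcal{T(Y)}) \to PS_*(\mathcal{T}^\epsilon(\mathcal{X})) \otimes_{\F[t]} PS_*(\mathcal{T}^\epsilon(\mathcal{Y}))$ induces a morphism between the natural short exact sequences of Theorem \ref{algkunnethpers}; since its outer $\otimes$- and $\mathsf{Tor}$-terms are the isomorphisms just noted, the five lemma forces an isomorphism on $H_n$. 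Composing these three isomorphisms with the initial reduction gives the desired natural $\F[t]$-isomorphism.

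The hard part, and the whole reason Proposition \ref{prop:defretract} was set up earlier, is the open-versus-closed mismatch: Persistent Eilenberg--Zilber is only available for open filtrations, so the real work lies in verifying that rerouting through the thickenings $\mathcal{T}^\epsilon$ leaves both sides' homology unchanged. I would finally check naturality: every comparison above is induced either by a canonical inclusion, by the natural chain equivalence of Lemma \ref{EZnew}, or by the natural K\"unneth sequence of Theorem \ref{algkunnethpers}, so each isomorphism --- and hence their composite --- is natural in $(\mathcal{X},\mathcal{Y})$, as required.
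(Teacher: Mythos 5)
Your proposal is correct and follows essentially the same route as the paper's proof: identify $PH_n(\mathcal{X}\otimes_\mathbf{g}\mathcal{Y})$ with $PH_n(\mathcal{T(X)}\otimes\mathcal{T(Y)})$ via Lemma \ref{lemma:tensorandgentensor}, pass to the open thickenings $\mathcal{T}^\epsilon$ using Proposition \ref{prop:defretract} so that the Persistent Eilenberg--Zilber Lemma \ref{EZnew} and the open-cover chain equivalence of \cite[Proposition 2.21]{hatcher} apply, and then remove the $\epsilon$. Your explicit K\"unneth-plus-five-lemma argument for the $\epsilon$-removal step correctly fills in what the paper states tersely (``the $\epsilon$ can be removed since $\mathcal{T}_k^\epsilon(\mathcal{X})$ deformation retracts onto $\mathcal{T}_k(\mathcal{X})$''), so the two proofs coincide in substance.
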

\begin{proof}
 Lemma \ref{lemma:tensorandgentensor} implies that
$PH_n(\mathcal{X}\otimes_\mathbf{g} \mathcal{Y}) \cong PH_n(\mathcal{T(X)}\otimes \mathcal{T(Y)})$,
 and Proposition \ref{prop:defretract} shows that if $0< \epsilon < 1$, then
\[
PH_n(\mathcal{T(X)}\otimes \mathcal{T(Y)})
\cong PH_n(\mathcal{T}^\epsilon\mathcal{(X)}\otimes \mathcal{T}^\epsilon\mathcal{(Y)}).
\]
Moreover,  since $\mathcal{T}^\epsilon(\mathcal{X})\in \mathbf{FTop}_0$,
then the inclusion
\[
\overline{PS}_*(\mathcal{T}^\epsilon(\mathcal{X}) \otimes \mathcal{T}^\epsilon(\mathcal{Y}) )
\hookrightarrow
PS_*(\mathcal{T}^\epsilon(\mathcal{X}) \otimes \mathcal{T}^\epsilon(\mathcal{Y}) )
\]
is a graded chain homotopy equivalence (see  \cite[Proposition 2.21]{hatcher}),
and
 thus  Lemma \ref{EZnew} shows that
\[
PH_n(\mathcal{X}\otimes_\mathbf{g} \mathcal{Y})
\cong
H_n\left(
PS_*(\mathcal{T}^\epsilon\mathcal{(X)}) \otimes_{\F[t]} PS_*(\mathcal{T}^\epsilon\mathcal{(Y)})
\right).
\]
The $\epsilon$ can be removed since $\mathcal{T}_k^\epsilon(\mathcal{X})$ deformation retracts onto $\mathcal{T}_k\mathcal{(X)}$
(Proposition \ref{prop:defretract}), which completes the proof.
\end{proof}

Finally, we obtain

\begin{theorem}\label{KunnethTPF}
	There is a natural short exact sequence of graded $\F[t]$-modules
	\begin{align*}
	0 \rightarrow \bigoplus_{i+j  = n}\left(PH_i(\mathcal{X}; \mathbb{F}) \otimes_{\mathbb{F}[t]} PH_{j}(\mathcal{Y}; \mathbb{F})\right) \rightarrow PH_n \left( \mathcal{X} \otimes_{\emph{\textbf{g}}} \mathcal{Y} \right) \rightarrow\\
	\bigoplus_{i+j = n}\mathsf{Tor}_{\mathbb{F}[t]}\left(PH_i(\mathcal{X}; \mathbb{F}),PH_{j-1}(\mathcal{Y}; \mathbb{F})\right) \rightarrow 0
	\end{align*}
which splits, though not naturally.
\end{theorem}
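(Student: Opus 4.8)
The plan is to obtain the sequence as a formal consequence of two results already established: the graded algebraic K\"unneth formula (Theorem \ref{algkunnethpers}) and the comparison isomorphism of Theorem \ref{thm:PersistentEZ}. First I would observe that, since $\mathcal{T(X)}$ and $\mathcal{T(Y)}$ are filtered spaces, the persistence chain complexes $PS_*(\mathcal{T(X)})$ and $PS_*(\mathcal{T(Y)})$ consist of free---hence flat---graded $\F[t]$-modules. Applying Theorem \ref{algkunnethpers} with $C = PS_*(\mathcal{T(X)})$ and $C' = PS_*(\mathcal{T(Y)})$, and using the identification $H_i(PS_*(\mathcal{T(X)})) = PH_i(\mathcal{T(X)})$ from Theorem \ref{PH}, produces a natural short exact sequence
\begin{align*}
0 \rightarrow \bigoplus_{i+j=n} PH_i(\mathcal{T(X)}) \otimes_{\F[t]} PH_j(\mathcal{T(Y)}) \xrightarrow{\;\mu\;} H_n\big(PS_*(\mathcal{T(X)}) \otimes_{\F[t]} PS_*(\mathcal{T(Y)})\big) \xrightarrow{\;\nu\;} \\
\bigoplus_{i+j=n} \mathsf{Tor}_{\F[t]}\big(PH_i(\mathcal{T(X)}),\, PH_{j-1}(\mathcal{T(Y)})\big) \rightarrow 0
\end{align*}
which splits, though not naturally. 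The natural telescope isomorphisms $PH_i(\mathcal{X}) \cong PH_i(\mathcal{T(X)})$ and $PH_j(\mathcal{Y}) \cong PH_j(\mathcal{T(Y)})$ then let me rewrite the two outer terms as $\bigoplus_{i+j=n} PH_i(\mathcal{X}) \otimes_{\F[t]} PH_j(\mathcal{Y})$ and $\bigoplus_{i+j=n} \mathsf{Tor}_{\F[t]}(PH_i(\mathcal{X}), PH_{j-1}(\mathcal{Y}))$.

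The decisive step is to replace the middle term. By Theorem \ref{thm:PersistentEZ} there is a natural $\F[t]$-isomorphism
\[
\Theta : H_n\big(PS_*(\mathcal{T(X)}) \otimes_{\F[t]} PS_*(\mathcal{T(Y)})\big) \xrightarrow{\;\cong\;} PH_n(\mathcal{X} \otimes_{\mathbf{g}} \mathcal{Y}).
\]
Composing the injection on the left with $\Theta$, and the surjection on the right with $\Theta^{-1}$, produces the morphisms
\begin{align*}
\Theta \circ \mu &: \bigoplus_{i+j=n} PH_i(\mathcal{X}) \otimes_{\F[t]} PH_j(\mathcal{Y}) \longrightarrow PH_n(\mathcal{X} \otimes_{\mathbf{g}} \mathcal{Y}), \\
\nu \circ \Theta^{-1} &: PH_n(\mathcal{X} \otimes_{\mathbf{g}} \mathcal{Y}) \longrightarrow \bigoplus_{i+j=n} \mathsf{Tor}_{\F[t]}(PH_i(\mathcal{X}),\, PH_{j-1}(\mathcal{Y})).
\end{align*}
Because $\Theta$ is an isomorphism, exactness at every spot is preserved, and because $\mu$, $\nu$, $\Theta$, and the telescope comparisons are each natural, the composite sequence is natural in $(\mathcal{X},\mathcal{Y})$. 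Finally, if $s$ is a (non-natural) splitting of $\nu$ supplied by Theorem \ref{algkunnethpers}, then $\Theta \circ s$ splits $\nu \circ \Theta^{-1}$; the sequence therefore splits, with the failure of naturality inherited from that of $s$.

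I do not anticipate a genuine obstacle at this stage, since the substantive work was already done in Theorem \ref{thm:PersistentEZ} (via the Persistent Eilenberg--Zilber Lemma \ref{EZnew}) and in the graded algebraic K\"unneth formula. The only point demanding care is the naturality bookkeeping: one must verify that conjugating the middle object by $\Theta$ remains compatible with the morphisms induced by an arbitrary pair $(\mathcal{X},\mathcal{Y}) \to (\mathcal{X}',\mathcal{Y}')$ in $\mathbf{Top^N}\times\mathbf{Top^N}$. This is exactly the content of the naturality clauses in Theorems \ref{algkunnethpers} and \ref{thm:PersistentEZ}, combined with the naturality of $PH_i(\mathcal{X})\cong PH_i(\mathcal{T(X)})$, so no new argument is required.
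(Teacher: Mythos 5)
Your proposal is correct and follows essentially the same route as the paper: the paper obtains Theorem \ref{KunnethTPF} by applying the graded algebraic K\"unneth formula (Theorem \ref{algkunnethpers}) to the flat complexes $PS_*(\mathcal{T(X)})$ and $PS_*(\mathcal{T(Y)})$, rewriting the outer terms via the natural telescope isomorphisms $PH_i(\mathcal{X}) \cong PH_i(\mathcal{T(X)})$, and then identifying the middle term with $PH_n(\mathcal{X}\otimes_{\mathbf{g}}\mathcal{Y})$ through the natural isomorphism of Theorem \ref{thm:PersistentEZ}. The only cosmetic difference is that the paper packages the first two steps as a separate intermediate theorem stated right after Theorem \ref{algkunnethpers}, whereas you carry them out inline before conjugating by the Eilenberg--Zilber isomorphism.
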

\begin{remark}
	If $\mathbf{S}$ is either $\mathbf{Top}$, $\mathbf{Met}$, $\mathbf{oSimp} $, or $\mathbf{Simp}$, then following Remark \ref{rmk:spaceskunneth} yields a similar Persistent
K\"unneth theorem in each case.
\end{remark}

\subsection{Barcode Formulae}
We now give the barcode formula for the generalized tensor product of objects in $\mathbf{S^N}$.
Recall from Theorem \ref{thm:CrawleyBoevey} that pointwise finite objects in $\mathbf{Mod_\mathbb{F}^N}$ can be uniquely decomposed into interval diagrams.
In Proposition \ref{intervaltensor}, the tensor product and $\mathsf{Tor}$ of the associated interval modules was computed, and
the following is a consequence of Theorem \ref{KunnethTPF}.
\begin{theorem} Let $\mathcal{X},\mathcal{Y} \in \mathbf{S^N}$. Assume that for $ 0 \leq i,j \leq n$, the diagrams $H_i(\mathcal{ X })$ and $H_j(\mathcal{ Y })$ are pointwise finite. Then  $H_n(\mathcal{X} \otimes_\mathbf{g} \mathcal{Y})$ is pointwise finite with barcode
	\begin{align*}
	\mathsf{bcd}_n (\mathcal{X} \otimes_{\emph{\textbf{g}}} \mathcal{Y}) =  \bigcup_{i + j= n} 	\bigg\{  (\ell_J + I) \cap (\ell_I + J)\ \Big\vert \ I \in \mathsf{bcd}_{i}(\mathcal{X}), J \in \mathsf{bcd}_{j}(\mathcal{Y}) \bigg\} \hspace{1cm}\\
	\bigcup \hspace{5.2cm}\\
	\bigcup_{i+j = n}\bigg\{ (\rho_J + I)\cap(\rho_I + J)\Big\vert \ I \in \mathsf{bcd}_{i}(\mathcal{X}), J \in \mathsf{bcd}_{j-1}(\mathcal{Y}) \bigg\}.\hspace{0.9cm}
	\end{align*}
\end{theorem}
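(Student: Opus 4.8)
The plan is to feed the split short exact sequence of Theorem \ref{KunnethTPF} through the interval-module computations of Proposition \ref{intervaltensor} (equivalently (\ref{eq:TensorTorCalculation})) and the uniqueness of barcodes from Theorem \ref{thm:CrawleyBoevey}. First I would use the splitting to identify $PH_n(\mathcal{X}\otimes_\mathbf{g}\mathcal{Y})$, as a graded $\F[t]$-module, with the direct sum $\bigoplus_{i+j=n}\big(PH_i(\mathcal{X})\otimes_{\F[t]}PH_j(\mathcal{Y})\big)\oplus\bigoplus_{i+j=n}\mathsf{Tor}_{\F[t]}\big(PH_i(\mathcal{X}),PH_{j-1}(\mathcal{Y})\big)$. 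Since $H_i(\mathcal{X})$ and $H_j(\mathcal{Y})$ are pointwise finite, the Correspondence Theorem \ref{correspondence} together with Theorem \ref{thm:CrawleyBoevey} supplies graded isomorphisms $PH_i(\mathcal{X})\cong\bigoplus_{I\in\mathsf{bcd}_i(\mathcal{X})}P\mathds{1}_I$, and similarly for $\mathcal{Y}$.

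Next I would distribute $\otimes_{\F[t]}$ and $\mathsf{Tor}_{\F[t]}$ over these (possibly infinite) direct sums. Both functors commute with arbitrary coproducts, the latter because a coproduct of graded free resolutions is again a graded free resolution and homology commutes with direct sums. Each term then becomes a direct sum of summands $P\mathds{1}_I\otimes_{\F[t]}P\mathds{1}_J$ and $\mathsf{Tor}_{\F[t]}(P\mathds{1}_I,P\mathds{1}_J)$ over the relevant pairs of intervals. Applying (\ref{eq:TensorTorCalculation}) rewrites these as $P\mathds{1}_{(\ell_J+I)\cap(\ell_I+J)}$ and $P\mathds{1}_{(\rho_J+I)\cap(\rho_I+J)}$ respectively, exhibiting $PH_n(\mathcal{X}\otimes_\mathbf{g}\mathcal{Y})$ as a direct sum of interval modules indexed exactly by the two multisets appearing in the statement.

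It remains to argue pointwise finiteness, and this is the step I expect to require the most care, since the input barcodes may themselves be infinite multisets; the algebraic bookkeeping above does not by itself control the degree-$k$ dimension. The degree-$k$ component of $P\mathds{1}_{(\ell_J+I)\cap(\ell_I+J)}$ is nonzero precisely when $k-\ell_J\in I$ and $k-\ell_I\in J$; the first condition forces $I$ to contain an integer in $\{0,\dots,k\}$, and since pointwise finiteness means $\#\{I\in\mathsf{bcd}_i(\mathcal{X}):r\in I\}=\dim_\F H_i(X_r)<\infty$ for each $r$, only finitely many such $I$ (and symmetrically finitely many such $J$) can occur. Hence only finitely many summands are nonzero in degree $k$. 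An analogous count handles the $\mathsf{Tor}$ summands, using that $\mathsf{Tor}_{\F[t]}(P\mathds{1}_I,P\mathds{1}_J)$ vanishes unless both $I$ and $J$ are finite and that membership of $k$ forces $\rho_I,\rho_J\le k$. Thus each of the two outer terms is pointwise finite, and by the splitting so is $PH_n(\mathcal{X}\otimes_\mathbf{g}\mathcal{Y})$.

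Finally, with pointwise finiteness established, the uniqueness clause of Theorem \ref{thm:CrawleyBoevey} lets me read off the barcode as the multiset union of the two families of intervals, which is exactly the asserted formula; translating back through the Correspondence Theorem \ref{correspondence} yields the statement for $\mathsf{bcd}_n$ in $\mathbf{Mod}_\F^\mathbf{N}$, and the Remark following Theorem \ref{KunnethTPF} extends it from $\mathbf{Top}$ to the remaining categories $\mathbf{S}$.
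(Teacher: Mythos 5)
Your proposal is correct and follows exactly the route the paper intends: the paper states this theorem without a written proof, presenting it as an immediate consequence of the split exact sequence of Theorem \ref{KunnethTPF}, the interval decomposition of Theorem \ref{thm:CrawleyBoevey}, and the tensor/Tor computations of Proposition \ref{intervaltensor}, which is precisely your chain of reasoning. Your writeup simply supplies the details the paper leaves implicit (distribution of $\otimes_{\F[t]}$ and $\mathsf{Tor}_{\F[t]}$ over possibly infinite direct sums, and the degree-by-degree count establishing pointwise finiteness), and those details are sound.
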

In the above formula, the first union of intervals is associated with the tensor part of the sequence in theorem \ref{KunnethTPF}, while the second union is associated with the $\mathsf{Tor}$ part. The results are easy to generalize to $0$-th persistent homology in the case of finite tensor product of objects in \textbf{Top}.
\begin{corollary}[$0$-th persistent homology]
	Let $\mathcal{ X }_1,   \dots, \mathcal{ X }_p$$ \in \mathbf{S^N}$, and assume that  $H_0\left(\mathcal{ X }_i\right)$ is pointwise finite for all $i\in \N$.
Let  $\mathsf{bcd}_0(\mathcal{ X }_i)$ be the $0$-dimensional barcode of $\mathcal{ X }_i$, then
	\begin{align*}
	\mathsf{bcd}_0\left(\mathcal{ X }_1 \otimes_{\emph{\textbf{g}}} \cdots \otimes_{\emph{\textbf{g}}} \mathcal{ X }_p \right)= \bigg\lbrace \bigcap_{i=1}^p \big( \left( \ell - \ell_{I_i} \right)  + I_i  \big)  \bigg\vert \ \ell = \sum_{i=1}^p \ell_{I_i}, \ I_i \in\mathsf{bcd}_0(\mathcal{ X }_i)\bigg\rbrace.
	\end{align*}
	In other words, a bar in $\mathcal{ X }_1 \otimes_{\emph{\textbf{g}}} \cdots \otimes_{\emph{\textbf{g}}} \mathcal{ X }_p$ corresponding to $p$ bars, one from each $\mathsf{bcd}_0(\mathcal{ X }_i)$, starts at the sum of the starting points and lives as long as the shortest one.
\end{corollary}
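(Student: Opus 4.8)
The plan is to reduce to the two-factor barcode formula established just above (specialized to homological degree zero) and then assemble the $p$-fold statement by iterating. The key observation is that in degree $n=0$ the $\mathsf{Tor}$ contribution disappears. Indeed, in the short exact sequence of Theorem \ref{KunnethTPF} the only relevant summand is $\mathsf{Tor}_{\F[t]}\big(PH_0(\mathcal{X}),PH_{-1}(\mathcal{Y})\big)$, which vanishes since $PH_{-1}=0$; equivalently, the second (Tor) union in the barcode formula above is empty because it forces $J\in\mathsf{bcd}_{-1}(\mathcal{Y})=\emptyset$. Hence the map $\mu$ in Theorem \ref{KunnethTPF} is a natural isomorphism $PH_0(\mathcal{X}\otimes_{\mathbf{g}}\mathcal{Y})\cong PH_0(\mathcal{X})\otimes_{\F[t]}PH_0(\mathcal{Y})$, and the degree-zero barcode reduces to
\[
\mathsf{bcd}_0(\mathcal{X}\otimes_{\mathbf{g}}\mathcal{Y})=\Big\{(\ell_J+I)\cap(\ell_I+J)\ \Big|\ I\in\mathsf{bcd}_0(\mathcal{X}),\ J\in\mathsf{bcd}_0(\mathcal{Y})\Big\}.
\]
Since $\otimes_{\F[t]}$ is associative, iterating this natural isomorphism shows that $PH_0\big(\mathcal{X}_1\otimes_{\mathbf{g}}\cdots\otimes_{\mathbf{g}}\mathcal{X}_p\big)\cong PH_0(\mathcal{X}_1)\otimes_{\F[t]}\cdots\otimes_{\F[t]}PH_0(\mathcal{X}_p)$ independently of parenthesization, so the left-hand side is well defined.

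Next I would decompose each factor into interval modules via Theorem \ref{thm:CrawleyBoevey} and Remark \ref{rmk:interval}, writing $PH_0(\mathcal{X}_i)\cong\bigoplus_{I_i\in\mathsf{bcd}_0(\mathcal{X}_i)}\mathcal{I}_{\ell_{I_i},\,\rho_{I_i}-\ell_{I_i}}$. Distributing the tensor product over these direct sums and applying the tensor computation of Proposition \ref{intervaltensor} iteratively (the minimum of minima being again a minimum) gives, for each tuple $(I_1,\dots,I_p)$, the summand
\[
\bigotimes_{i=1}^p\mathcal{I}_{\ell_{I_i},\,\rho_{I_i}-\ell_{I_i}}\;\cong\;\mathcal{I}_{\,\sum_i\ell_{I_i},\ \min_i(\rho_{I_i}-\ell_{I_i})},
\]
which is the interval module of $\big[\,\ell,\ \ell+\min_i(\rho_{I_i}-\ell_{I_i})\,\big)$ where $\ell:=\sum_i\ell_{I_i}$.

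Finally I would translate back to barcodes. Since $(\ell-\ell_{I_i})+I_i=\big[\ell,\ \ell+(\rho_{I_i}-\ell_{I_i})\big)$ for each $i$, intersecting over $i$ yields $\bigcap_{i=1}^p\big((\ell-\ell_{I_i})+I_i\big)=\big[\ell,\ \ell+\min_i(\rho_{I_i}-\ell_{I_i})\big)$, matching the interval produced above; collecting these over all tuples (as multisets, so multiplicities are handled automatically) gives the claimed formula. The computation is essentially routine once the reduction is in place; the only point requiring genuine care is the interval bookkeeping, namely that left-shifting each $I_i$ by $\ell-\ell_{I_i}$ aligns all left endpoints at $\ell$, so that the intersection retains the shortest length. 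An alternative, purely barcode-level argument proceeds by induction on $p$: applying the two-factor formula to $\big(\mathcal{X}_1\otimes_{\mathbf{g}}\cdots\otimes_{\mathbf{g}}\mathcal{X}_{p-1}\big)\otimes_{\mathbf{g}}\mathcal{X}_p$ and performing the same shift-and-intersect step reproduces the inductive formula, with base case $p=1$ given by taking $\ell=\ell_{I_1}$ so that $(\ell-\ell_{I_1})+I_1=I_1$.
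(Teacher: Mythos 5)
Your proposal is correct and follows exactly the route the paper intends (the paper states this corollary without a separate proof, remarking only that the results "are easy to generalize"): specialize the two-factor K\"unneth/barcode formula to degree zero, where the $\mathsf{Tor}$ term vanishes since $PH_{-1}=0$, then iterate using the interval-module tensor computation $\mathcal{I}_{m,k}\otimes_{\F[t]}\mathcal{I}_{n,l}\cong\mathcal{I}_{m+n,\min\{k,l\}}$ of Proposition \ref{intervaltensor} and the decomposition of Theorem \ref{thm:CrawleyBoevey}. Your shift-and-intersect bookkeeping, showing $\bigcap_i\big((\ell-\ell_{I_i})+I_i\big)=\big[\ell,\ell+\min_i(\rho_{I_i}-\ell_{I_i})\big)$, matches the paper's stated formula, and your inductive variant on parenthesized products is a valid way to make the $p$-fold iteration precise.
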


Topological inference deals with estimating the homology of  a metric space $\mathbb{X}$ from a finite point cloud $X$, as discussed in Example \ref{expl:MotivationTDA}. Usually, longer bars in a barcode represent significant topological features. The following corollary allows us to count the number of bars longer than a threshold $\epsilon > 0$ in the barcode of the tensor product.
\begin{corollary}[On high persistence]
	For $n \geq 0$ and $\epsilon>0$, let $\emph{\textbf{c}}^{\mathcal{X}}_n (\beta )$ and $\emph{\textbf{c}}^{\mathcal{X}}_n (\beta \geq \epsilon)$ denote the number of bars with length exactly $\beta$ and at least $\epsilon$ respectively in $\mathsf{bcd}_n(\mathcal{X})$. Assume that the lengths of longest bars in $\mathsf{bcd}_n(\mathcal{X})$ and $\mathsf{bcd}_n(\mathcal{Y})$ are $\beta^{\mathcal{X}}_n$ and $\beta^{\mathcal{Y}}_n$ respectively, then
	\begin{align*}
	\beta^{\mathcal{X} \otimes_{\emph{\textbf{g}}} \mathcal{Y}}_n = \max \bigg\{ \max_{k+l=n}\Big\{ \min\lbrace \beta^{\mathcal{X}}_k, \beta^{\mathcal{Y}}_l\rbrace  \Big\} , \max_{k+l=n}\Big\{ \min\lbrace \beta^{\mathcal{X}}_k, \beta^{\mathcal{Y}}_{l-1}\rbrace  \Big\} \bigg\}
	\end{align*}
	and
	\begin{align}\label{eqn:longbars}
	\emph{\textbf{c}}^{\mathcal{X} \otimes_{\emph{\textbf{g}}} \mathcal{Y}}_n(\beta \geq \epsilon) = \sum_{k+l=n} \emph{\textbf{c}}^{ \mathcal{Y}}_l(\beta \geq \epsilon)  \emph{\textbf{c}}^{\mathcal{X}}_k(\beta \geq \epsilon)
	+  \sum_{k+l=n-1} \emph{\textbf{c}}^{ \mathcal{Y}}_l(\beta \geq \epsilon)  \emph{\textbf{c}}^{\mathcal{X}}_k(\beta \geq \epsilon).
	\end{align}
\end{corollary}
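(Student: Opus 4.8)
The plan is to reduce both identities to the barcode formula for $\mathcal{X}\otimes_{\mathbf{g}}\mathcal{Y}$ established above, by first computing the \emph{length} of every interval appearing in that formula, and then performing elementary optimization (for the longest bar) and counting (for the number of long bars) over the contributing pairs $(I,J)$. All the topology and homological algebra is already packaged in that formula and in Proposition \ref{intervaltensor}; what remains is bookkeeping on intervals of $\N$.

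First I would record the length of a single interval produced by the formula. Writing $|I| = \rho_I - \ell_I$ (with $|I| = \infty$ when $\rho_I = \infty$), a direct endpoint computation gives, for the tensor part,
\[
(\ell_J + I)\cap(\ell_I + J) = \big[\,\ell_I + \ell_J,\ \min\{\ell_J + \rho_I,\ \ell_I + \rho_J\}\,\big),
\]
of length exactly $\min\{|I|,|J|\}$, and for the Tor part
\[
(\rho_J + I)\cap(\rho_I + J) = \big[\,\max\{\rho_J + \ell_I,\ \rho_I + \ell_J\},\ \rho_I + \rho_J\,\big),
\]
again of length $\min\{|I|,|J|\}$. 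These lengths are exactly what Proposition \ref{intervaltensor} predicts: the tensor factor $\mathcal{I}_{m+n,\min\{k,l\}}$ and the Tor factor $\mathcal{I}_{m+n+\max\{k,l\},\min\{k,l\}}$ both have length $\min\{k,l\}$. The point requiring care is that the Tor interval is \emph{nonempty} only when both $\rho_I,\rho_J<\infty$; if $\max\{k,l\}=\infty$ its left endpoint is pushed to $\infty$ and the module vanishes, which is just the statement that $\mathsf{Tor}$ of a free $\F[t]$-module is zero.

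With the length computation in hand, the longest-bar formula follows from the elementary identity
\[
\max_{I\in\mathsf{bcd}_k(\mathcal{X}),\, J\in\mathsf{bcd}_l(\mathcal{Y})}\ \min\{|I|,|J|\} \;=\; \min\{\beta^{\mathcal{X}}_k,\beta^{\mathcal{Y}}_l\},
\]
valid because $\min\{|I|,|J|\}\le\min\{\beta^{\mathcal{X}}_k,\beta^{\mathcal{Y}}_l\}$ termwise and equality is attained on the pair of longest bars. Taking the maximum of the tensor contribution over $k+l=n$, of the Tor contribution over $k+l=n-1$ (reindexed as $\min\{\beta^{\mathcal{X}}_k,\beta^{\mathcal{Y}}_{l-1}\}$ with $k+l=n$), and then the maximum of the two, yields the claimed value of $\beta^{\mathcal{X}\otimes_{\mathbf{g}}\mathcal{Y}}_n$. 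For the counting identity \eqref{eqn:longbars} I would observe that a bar has length $\ge\epsilon$ iff its generating pair satisfies $\min\{|I|,|J|\}\ge\epsilon$, i.e.\ $|I|\ge\epsilon$ \emph{and} $|J|\ge\epsilon$; since each pair $(I,J)$ generates exactly one interval in each part, the tensor-part bars of length $\ge\epsilon$ number $\sum_{k+l=n}\mathbf{c}^{\mathcal{X}}_k(\beta\ge\epsilon)\,\mathbf{c}^{\mathcal{Y}}_l(\beta\ge\epsilon)$ and the Tor-part bars the analogous sum over $k+l=n-1$, giving the stated total.

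The main obstacle, and the step I would treat most carefully, is the bookkeeping of half-infinite bars in the Tor summand: a pair $(I,J)$ with $\min\{|I|,|J|\}\ge\epsilon$ but $\rho_I=\infty$ or $\rho_J=\infty$ contributes \emph{nothing} to $\mathsf{bcd}_n(\mathcal{X}\otimes_{\mathbf{g}}\mathcal{Y})$, whereas $\mathbf{c}^{\mathcal{X}}_k(\beta\ge\epsilon)$ and $\mathbf{c}^{\mathcal{Y}}_l(\beta\ge\epsilon)$ count infinite bars too. Consequently the Tor term in \eqref{eqn:longbars}, and the term $\min\{\beta^{\mathcal{X}}_k,\beta^{\mathcal{Y}}_{l-1}\}$ in the longest-bar formula, must be read with $\mathbf{c}$ and $\beta$ restricted to bars of \emph{finite} length; equivalently, the identities are asserted in the high-persistence/finite regime in which the infinite classes do not interfere. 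I would state this convention explicitly and then check that, under it, the assignment sending each generating pair to its bar is a genuine bijection onto the corresponding part of the barcode, after which both identities are immediate.
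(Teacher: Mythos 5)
Your proposal is correct and is essentially the argument the paper intends: the corollary is presented as an immediate consequence of the barcode formula for $\mathcal{X}\otimes_{\mathbf{g}}\mathcal{Y}$ together with the interval computations of Proposition \ref{intervaltensor}, and your endpoint/length bookkeeping plus the pairwise counting over contributing pairs $(I,J)$ is exactly that deduction. The one point where you go beyond the paper is your treatment of half-infinite bars: you are right that a pair with $\rho_I=\infty$ or $\rho_J=\infty$ contributes nothing to the Tor summand, so both displayed identities are literally correct only when the Tor terms are read with $\mathbf{c}$ and $\beta$ restricted to bars of finite length. For instance, if $\mathcal{X}$ and $\mathcal{Y}$ are both constant point filtrations, the stated count identity at $n=1$ yields $1$ from the Tor sum (the two infinite bars in degree $0$), while $\mathsf{bcd}_1(\mathcal{X}\otimes_{\mathbf{g}}\mathcal{Y})$ is actually empty. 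The paper's statement is silent on this, so your explicit convention is a needed sharpening of the statement rather than a deviation from its proof.
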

By replacing $\epsilon$ with $\beta^{\mathcal{X} \otimes_{{\textbf{g}}} \mathcal{Y}}_n$, the formula in equation \ref{eqn:longbars} can be used to compute the number of bars with the longest length in $\mathsf{bcd}_n\left(\mathcal{X} \otimes_{{\textbf{g}}} \mathcal{Y}\right)$.

\section{Applications and Discussion}\label{application}
\subsection{Application: Time series analysis}
Time series are ubiquitous in data science and make for an important object of study.
Recently, the problem of detecting  recurrence in time varying-data
has received increasing attention in the applied topology literature \cite{perea2019notices, Xu2019}.
Two types of recurrence that   appear prominently are periodicity  and \emph{quasiperiodicity}.
A function $f: \R \longrightarrow \C$ is said to be quasiperiodic on the $\Q$-linearly independent frequencies
$\omega_1 ,\ldots, \omega_N$ if there exists  $F: \R^N \longrightarrow \C$
so that each  $t\mapsto F(t_1,\ldots, t_{n-1}, t, t_{n+1}, \ldots, t_N)$
is periodic with frequency $\omega_n$,  and $f(t)= F(t,\ldots, t)$ for all $t\in \R$.
Quasiperiodicity appears  naturally   in biphonation phenomena in mammals \cite{mammalsubharmonics},
and in  transitions to chaos in rotating fluids \cite{rotatingfluid}.

The topological approach to   recurrence detection   starts with
the \emph{sliding window embedding} of  $f$, defined for each $t\in \R$  and
 parameters $d \in \mathbb{N}$ and $\tau \in (0,\infty)$, as
\begin{align*}
SW_{d,\tau}f(t) =
\begin{bmatrix}
f(t) \\
f(t+\tau) \\
\vdots \\
f(t+d\tau)
\end{bmatrix} \in \C^{d+1}.
\end{align*}
One then uses the barcodes $\bcd_i^\mathcal{R}(SW_{d,\tau} f(T) ; \F)$ for $T\subset \R$,
as signatures;
for if
$f$ is periodic,  then $SW_{d,\tau} f(\R)$ is a closed curve \cite{sw1pers},
and when  $f$ is quasiperiodic  then $SW_{d,\tau} f(\Z)$ is dense in a high-dimensional torus \cite{toroidal, SWquasiperiodic}.
These ideas have been used successfully to find new periodic genes in biological systems \cite{perea2015sw1pers},
and to automatically detect biphonation in high-speed videos of vibrating vocal folds \cite{tralie2018quasi}.

One of the main challenges in computing
$\bcd_i^\mathcal{R}(SW_{d,\tau}f(T); \F)$ for a quasiperiodic time series $f$
is that if
  $SW_{d,\tau }f$ fills out the torus at a slow rate ---
requiring  a potentially large  $T \subset \Z$ ---
then
the persistent homology computation becomes prohibitively large.
We will see next how the categorical persistent K\"unneth formula (Corollary \ref{cor:RipsKunneth})
can be used to address this.
The general strategy will be illustrated with a computational example.

Let   $c_1,c_2  \in \C\smallsetminus \{0\}$ with  $|c_1|^2 +  |c_2|^2 = 1$,
and let $\omega \in \R\smallsetminus \Q$.
If
\[
f(t) =  c_1 e^{i t} +  c_2 e^{i \omega t }
\]
then
$
SW_{d,\tau} f(t) =
\Omega \cdot \phi(t)
$,
where
\[
\Omega =
\frac{1}{\sqrt{d+1}}
\begin{bmatrix}
  1 & 1  \\
  e^{i\tau} & e^{i \omega\tau}\\
  \vdots & \vdots \\
 e^{i d\tau} & e^{i d\omega \tau}
\end{bmatrix}
\;\;\; , \;\;\;\;\;
\phi(t) =
\sqrt{d+1}
\begin{bmatrix}
  c_1e^{i t} \\[.2cm]
  c_2e^{i \omega t }
\end{bmatrix}.
\]
An elementary calculation shows that if $d\geq 1$ and $\tau = \frac{2\pi}{(d+1)|\omega - 1|}$, then the columns of $\Omega$ are orthonormal in $\C^{d+1}$.
Fix $d = 1$, $\omega =\sqrt{3}$, $\tau$ as above, and $c_1 = c_2 = 1/\sqrt{2}$.
Figure \ref{fig:timeseriesandPCA} below shows the real part of $f$ (left),
and a visualization (right) of the \emph{sliding window point cloud}  $\mathbb{SW}_{d,\tau}f = SW_{d,\tau} f(T)$,
$T = \{ t\in \Z : 0 \leq t \leq 4,000\}$,
 using Principal Component Analysis (PCA) \cite{jolliffe2011principal}.
\begin{figure}[!htb]
	\centering
    \hspace{-.2cm}
	\begin{subfigure}{0.65\textwidth}
		\centering
		\includegraphics[width = \textwidth]{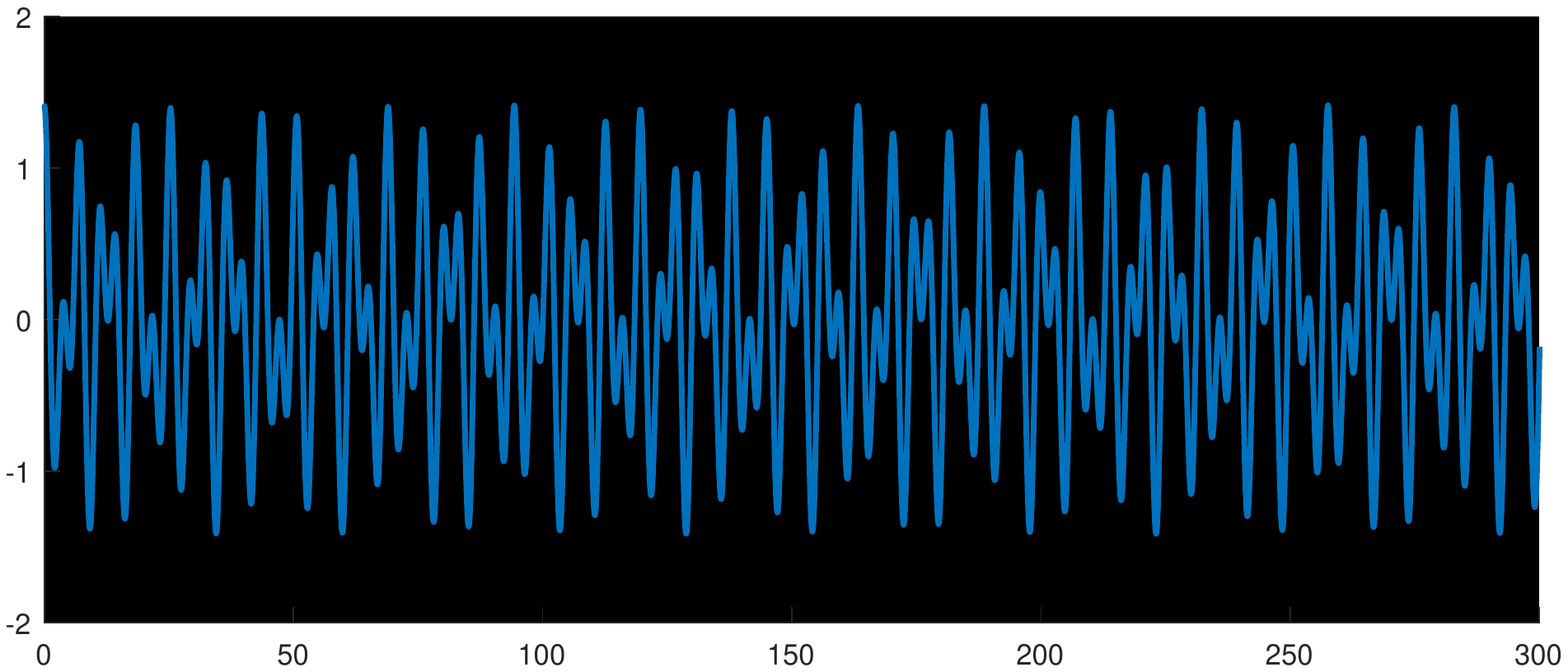}
		\end{subfigure}
    \hspace{.2cm}
	\begin{subfigure}{0.32\textwidth}
		\centering
		\includegraphics[width = \textwidth]{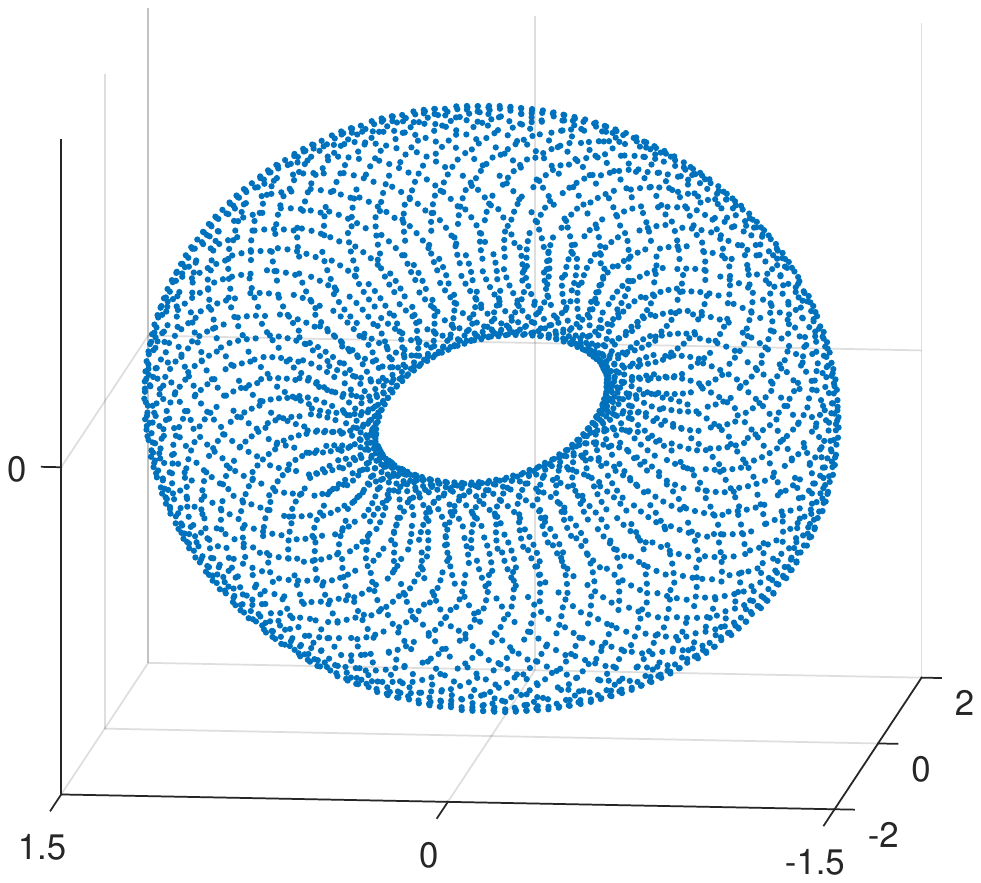}
	\end{subfigure}
	\caption{ (Left)   $\mathsf{real}\big(f(t)\big) = \frac{1}{\sqrt{2}}\cos(t) + \frac{1}{\sqrt{2}}\cos(\sqrt{3}t)$, and
(Right) a PCA projection into $\R^3$ of the point cloud $\mathbb{SW}_{d,\tau}f$.}
	\label{fig:timeseriesandPCA}
\end{figure}

Computing the exact barcodes $\bcd_i^\mathcal{R}(\mathbb{SW}_{d,\tau} f )$, $i = 0 ,1,2$,
for point clouds of this size is already prohibitively large  (e.g., with \verb"Ripser" \cite{ripser}).
Thus, one strategy is to choose a smaller set of landmarks
$L\subset\mathbb{SW}_{d,\tau} f$ and use the stability theorem \cite{chazal2016structure}
\[
d_B\left(\bcd_i^\mathcal{R}( L) ,  \bcd_i^\mathcal{R}(\mathbb{SW}_{d,\tau} f) \right)
\leq
2d_{GH}(L , \mathbb{SW}_{d,\tau} f )
\]
to estimate the barcodes of $\mathbb{SW}_{d,\tau} f$ using those of $L$, up to
an error (in the bottleneck sense) of twice the Gromov-Hausdorff distance $d_{GH}$ between
$L$ and $\mathbb{SW}_{d,\tau} f$.
Specifically, if $r = d_{GH}(L , \mathbb{SW}_{d,\tau} f)$ and $I = [\ell_I , \rho_I) \in \bcd_i^\mathcal{R}(L)$
satisfies $\rho_I - \ell_I > 4r$, then there is a unique $J \in \bcd_{i}^\mathcal{R}(\mathbb{SW}_{d,\tau} f)$
with $|\rho_I - \rho_J|, |\ell_I  - \ell_J| \leq 2r$, and thus
\begin{align}\label{eqn:landmarkbounds}
\begin{split}
\max\{0\;,\; \rho_I - 2r\} & \leq \rho_J \leq \rho_I + 2r \\[.1cm]
\max\{0\;,\; \ell_I - 2r\} & \leq \ell_J \leq \ell_I + 2r.
\end{split}
\end{align}
In what follows we compare this landmark approximation procedure
to a different strategy  leveraging the categorical persistent K\"unneth theorem
for the Rips filtration on the maximum metric (Corollary \ref{cor:RipsKunneth}).
Here is the setup:
\begin{description}
    \item[Landmarks] We select a   set $L \subset \mathbb{SW}_{d,\tau} f$ of 400 landmarks ($\%10$ of the data)
     using \verb"maxmin" sampling. That is,   $\ell_1 \in \mathbb{SW}_{d,\tau} f$ is chosen at random
     and   the rest of the landmarks are selected inductively as
    \[
    \ell_{j +1}  = \argmax_{x \in \mathbb{SW}_{d,\tau} f } \min\big\{\| x - \ell_1\|_2 , \ldots, \| x - \ell_j\|_2\big\}
    \]
    We   endow $L$ with the Euclidean distance in $\C^{2}$, and compute
    the barcodes $\bcd_{i}^\mathcal{R}(L, \|\cdot\|_2)$, $i =0 ,1,2$,  directly using   \verb"Ripser" \cite{ripser}.\\[-0.2cm]

    \item[K\"unneth]  Let $p_1, p_2 : \C^2 \longrightarrow \C$ be the projections onto the first and second coordinate, respectively.
    We select  two sets of landmarks, $X_L \subset p_1 \circ \phi(T)$ and $Y_L \subset p_2 \circ \phi(T)$ with 70 points each using \verb"maxmin" sampling,
    and endow the cartesian product $X_L \times Y_L$ with the maximum metric in $\C^2$.
    The barcodes of $X_L$ and $Y_L$ are computed using \verb"Ripser", and we deduce those of $\bcd_i^\mathcal{R}(X_L \times Y_L ,\|\cdot\|_\infty)$ for $i=0,1,2$ using Corollary \ref{cor:RipsKunneth}.

\end{description}

\subsubsection{Computational Results}
Table \ref{Table:Experiments}  shows the computational times in each approximation strategy ---
i.e., via the landmark set $L$ and the persistent K\"unneth theorem applied to $X_L \times Y_L$ ---
as well as the number of data points in each case.
\begin{table}[!htb]
\centering
\begin{tabular}{|l|c|c|}
  \hline
   & \textbf{Landmarks} & \textbf{K\"unneth} \\[.1cm]
   \hline
  Time (sec) & 15.22 & 0.20 \\[.1cm]
\hline
  \# of points & 400 & 4,900 \\[.1cm]
  \hline
\end{tabular}
\caption{Computational times, and number of points, for the landmark and K\"unneth approximations to $\bcd_i^\mathcal{R}(\mathbb{SW}_{d,\tau} f)$, $ i \leq 2$.}
\label{Table:Experiments}
\end{table}
\vspace{-.5cm}

As these results show, the K\"unneth approximation strategy is almost two orders of magnitude faster
that just taking landmarks, and
as we will see below, it is also more accurate.
Indeed,   the inequalities in (\ref{eqn:landmarkbounds}) can be used to generate a \emph{confidence region}
for the existence of an interval $J \in \bcd_i^\mathcal{R}(\mathbb{SW}_{d,\tau} f)$
given a large enough interval $I \in \bcd_i^\mathcal{R}(L )$. A similar
  region can be estimated from $\bcd_i^\mathcal{R}(X_L \times Y_L)$ as follows.
Since  the columns of $\Omega$ are orthonormal,
then for every $z\in \C^2$
we have
$
\|z \|_\infty  \leq  \|z\|_2 =  \|\Omega z\|_2  \leq   \sqrt{2} \| z \|_\infty
$
and thus
\[
R_{\epsilon}(\mathbb{SW}_{d,\tau} f , \|\cdot\|_2) \subset R_{\epsilon} (\phi(T), \|\cdot\|_\infty) \subset R_{\sqrt{2}\epsilon}(\mathbb{SW}_{d,\tau} f , \|\cdot\|_2).
\]
Moreover,  (see \ref{cor:comparisonbarcode}) if $I \in \bcd_i^\mathcal{R}(X_L \times Y_L , \|\cdot\|_\infty)$
satisfies
\[
\frac{\rho_I }{\sqrt{2}} - \sqrt{2}\ell_I \;\;>\;\; 4 d_{GH}\big(X_L\times Y_L , \phi(T)\big)
\]
where $\lambda = d_{GH}(X_L\times Y_L , \phi(T))$ is computed for subspaces of $(\C^2, \|\cdot\|_\infty)$,
then there exists a unique $J \in \bcd_i^\mathcal{R}(\mathbb{SW}_{d,\tau} f, \|\cdot\|_2)$ so that
\begin{align}\label{eqn:birthbounds}
\begin{split}
\max\left\{ 0 \; , \;
\frac{\rho_I -  2\lambda}{\sqrt{2}} \right\}\;\;\leq  & \;\rho_J \;\leq \;\;  \sqrt{2}\cdot(\rho_I + 2\lambda) \\[.1cm]
\max\left\{ 0 \; , \;
\frac{\ell_I -  2\lambda}{\sqrt{2}} \right\}\;\;\leq  & \; \ell_J \;\leq \;\;  \sqrt{2}\cdot(\ell_I + 2\lambda).
\end{split}
\end{align}

Figure \ref{fig:land_kunneth_regions}  shows the resulting confidence regions for both approximation strategies.
Each interval $[\rho , \ell)$ is replaced
by a point $(\rho , \ell) \in \R^2$, and the  regions for $\bcd_i^\mathcal{R}(L)$ (\ref{eqn:landmarkbounds})
and  $\bcd_i^\mathcal{R}(X_L \times Y_L)$ (\ref{eqn:birthbounds}) are shown in blue and red, respectively.
We compare these regions by computing their area, and report the results in Table \ref{fig:box_areas}.

\begin{figure}[!htb]
  \centering
  \includegraphics[width=0.9\textwidth]{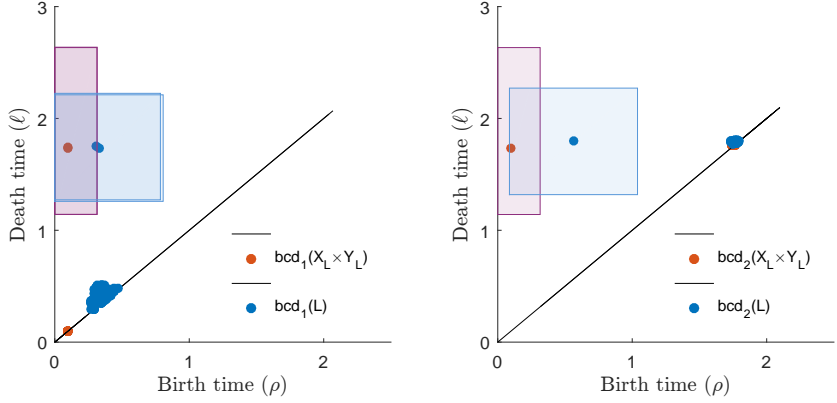}
  \caption{Confidence regions for $\bcd_i^\mathcal{R}(L)$   and $\bcd_i^\mathcal{R}(X_L \times Y_L)$, $i=1,2$. 
For  a given color, each box (confidence region) contains a point  $(\rho_J ,\ell_J)$ corresponding to a unique $J \in \bcd^\mathcal{R}_i(\mathbb{SW}_{d,\tau} f)$.}\label{fig:land_kunneth_regions}
\end{figure}

\begin{table}[!htb]
  \centering
  \begin{tabular}{|c|c|c|}
    \hline
    \textbf{$i$} & \textbf{Landmarks} & \textbf{K\"unneth} \\
\hline
    1 & 0.7677  & 0.4680 \\
\hline
    1 & 0.7480  & 0.4709 \\
\hline
    2 & 0.9072 & 0.4704 \\
    \hline
  \end{tabular}
  \caption{Areas for confidence regions from $\bcd_i^\mathcal{R}(L)$ (blue) and $\bcd_i^\mathcal{R}(X_L\times Y_L)$ (red), $i=1,2$ (see Figure \ref{fig:land_kunneth_regions}). 
A smaller area suggests a better approximation.}\label{fig:box_areas}\vspace{-.5cm}
\end{table}

These results suggest that the K\"{u}nneth strategy  provides a tighter approximation than using landmarks.
Moreover,   both  strategies can be used in tandem --- improving the quality of approximation ---  via   intersection of confidence regions.

\subsection{Application: Vietoris-Rips complexes of $n$-Tori}
The main theorem in \cite{adams} implies that the Vietoris-Rips complex ${R}_{\epsilon}(S^1_r)$ of a circle of radius $r$(equipped with Euclidean metric) is homotopy equivalent to $S^{2l+1}$ if
\begin{align*}
2r \sin\left(\pi \frac{l}{2l+1}\right) < \epsilon \leq 2r\sin\left(\pi \frac{l+1}{2l+3}\right).
\end{align*}
Consider the categorical product of Vietoris-Rips complexes on $N$ circles $ \mathcal{R}(S^1_{r_1}) \times \mathcal{R}(S^1_{r_2}) \times \cdots \times \mathcal{R}(S^1_{r_N}) $. We know it is homotopy equivalent to $\mathcal{ R}( \textbf{T} )$ where $\textbf{T} = S^1_{r_1} \times \cdots\times S^1_{r_N}$ equipped with the maximum metric. For its barcode $\mathsf{bcd}_p$ in dimension $p$, let $\mathsf{bcd}_p^{\epsilon}$ be the subcollection of bars in $\mathsf{bcd}_p$ that start at $0$ and end after $\epsilon$. For each $1 \leq n \leq N$, the only two bars in $\mathsf{bcd}_p(\mathcal{R}(S^1_{r_n}))$ that start at $0$ are the intervals $[0,\sqrt{3}r_n] \in \mathsf{bcd}_1\left(\mathcal{R}\left(S^1_{r_n}\right)\right)$ and $[0,\infty)\in\mathsf{bcd}_0\left(\mathcal{R}\left(S^1_{r_n}\right)\right)$. Then
\begin{align*}
	 {\mathsf{bcd}}_p^{\epsilon}\left( \mathcal{R}(\textbf{T})\right)= \bigg\lbrace I_1 \cap \cdots \cap  I_N \ \bigg\vert \ I_n \in  {\mathsf{bcd}}_{m_n}^{\epsilon}\left(\mathcal{R}\left(S^1_{r_n}\right)\right), m_n \in \{0,1\}, \sum_{n=1}^{N} m_n = p\bigg\rbrace
\end{align*}
Let $\chi_n$ be the indicator function for the interval $[0,\sqrt{3}r_n]$, that is, $\chi_n(\epsilon) = 1$ if $\epsilon \in [0,\sqrt{3}r_n]$ and $0$ otherwise. Then the number of elements in ${\mathsf{bcd}}_p^{\epsilon}\left( \mathcal{R}(\mathsf{T})\right)$ is given by
\begin{align*}
\#\left({\mathsf{bcd}}_p^{\epsilon}\left( \mathcal{R}(\textbf{T})\right)\right) = {\sum_{n=1}^N \chi_n(\epsilon) \choose p}.
\end{align*}
This result was originally proved in \cite[Proposition 2.7]{toroidal}. We can also compute the barcodes
\begin{align*}
\mathsf{bcd}_1(\mathcal{ R }(\textbf{T})) = \big\lbrace [0,\sqrt{3}r_n] \ \big\vert n = 1,\dots, N \big\rbrace
\end{align*}
in dimension $1$, and
\begin{align*}
\mathsf{bcd}_2(\mathcal{ R }(\textbf{T})) = \bigg\lbrace \left[0,\sqrt{3}\min\{r_n,r_m\}\right] \ \bigg\vert \ n,m = 1,\dots, N \bigg\rbrace
\end{align*}
in dimension $2$.

\bibliographystyle{acm}
\bibliography{PersistentKunnethBibliography}

\end{document}